\pgfplotsset{compat=newest}
\pgfplotsset{plot coordinates/math parser=false}
\newlength\figureheight
\newlength\figurewidth
\DeclareMathOperator{\trace}{tr}
\DeclareMathOperator{\Id}{Id}
\DeclareMathOperator*{\argmin}{arg\,min}
\DeclareMathOperator{\rank}{rank}
\DeclareMathOperator{\prox}{prox}
\DeclareMathOperator{\R}{\mathbb{R}}
\DeclareMathOperator{\N}{\mathbb{N}}
\DeclareMathOperator{\diag}{diag}
\DeclareMathOperator{\dg}{dg}
\DeclareMathOperator{\Ex}{\mathbb{E}}
\renewcommand\f[1]{\mathbf{#1}}
\newcommand\y[1]{\mathcal{#1}}
\newcommand\Rdd{\R^{d_1 \times d_2}}
\newcommand\hk{^{(k)}}
\newcommand\hkk{^{(k+1)}}
\newcommand\bb[1]{\mathbb{#1}}
\newtheorem{theorem}{Theorem}[section]
\newtheorem{lemma}{Lemma}[section]
\newtheorem{remark}{Remark}[section]
\newtheorem{definition}{Definition}[section]
\title{A Scalable Second Order Method for Ill-Conditioned Matrix Completion from Few Samples}
\author[C. K\"ummerle \and C. Mayrink Verdun]{Christian K\"ummerle$^*$ \and Claudio Mayrink Verdun$^\dagger$}
\thanks{$^*$Department of Applied Mathematics \& Statistics, Johns Hopkins University, Baltimore, USA (\href{mailto:kuemmerle@jhu.edu}{kuemmerle@jhu.edu}).}
\thanks{$^\dagger$Department of Mathematics and Department of Electrical and Computer Engineering, Technical University of Munich, Munich, Germany (\href{mailto:claudio.verdun@tum.de}{claudio.verdun@tum.de})}
\begin{document}

\maketitle

\begin{abstract}

We propose an iterative algorithm for low-rank matrix completion that can be interpreted as an iteratively reweighted least squares (IRLS) algorithm, a saddle-escaping smoothing Newton method or a variable metric proximal gradient method applied to a non-convex rank surrogate. It combines the favorable data-efficiency of previous IRLS approaches with an improved scalability by several orders of magnitude. We establish the first local convergence guarantee from a minimal number of samples for that class of algorithms, showing that the method attains a local quadratic convergence rate. Furthermore, we show that the linear systems to be solved are well-conditioned even for very ill-conditioned ground truth matrices.  We provide extensive experiments, indicating that unlike many state-of-the-art approaches, our method is able to complete very ill-conditioned matrices with a condition number of up to $10^{10}$ from few samples, while being competitive in its scalability.

\end{abstract}

\section{Introduction}\label{intro}

In different areas of machine learning and signal processing, low-rank models have turned out to be a powerful tool for the acquisition, storage and computation of information. 
In many of these applications, an important sub-problem is to infer the low-rank model from partial or incomplete data \cite{Davenport16,ChiLuChen19}. 

This problem is called \emph{low-rank matrix completion}: Given a matrix $\f{X}^0 \in \Rdd$ of rank-$r$ and an index set $\Omega \subset [d_1] \times [d_2]$, the task is to reconstruct $\f{X}^0$ just from the knowledge of $\Omega$ and $P_{\Omega}(\f{X}^0)$, where $P_{\Omega}: \Rdd \to \mathbb{R}^m$ is the subsampling operator that maps a matrix to the set of entries indexed by $\Omega$. It is well-known that this can be reformulated \cite{Recht10} as the NP-hard \emph{rank minimization} problem
\begin{equation}\label{rank_equation}
    \min_{\f{X} \in \R^{d_1 \times d_2}} \rank(\f{X}) \quad \mbox{ subject to } P_{\Omega}(\f{X}) = P_{\Omega}(\f{X}^0).
\end{equation}

From an optimization point of view, \eqref{rank_equation} is particularly difficult to handle due to two properties: its \emph{non-convexity} and its \emph{non-smoothness}. A widely studied approach in the literature replaces the $\rank(\f{X})$ by the (convex) nuclear norm $\|\f{X}\|_* = \sum_{i=1}^d \sigma_i(\f{X})$ \cite{FHB03}, which is the tightest convex envelope of the rank, as an objective. For this approach, a mature theory has been developed that includes performance guarantees for a near-optimal sample complexity \cite{CandesTao10,Chen15} and robustness to noise \cite{CandesPlan10,Chen20noisy}. 

However, from a practical point of view, using such a convex relaxation to find a low-rank completion is \emph{computationally very demanding}, as even first-order solvers have an per-iteration arithmetic complexity that is at least cubic in the dimensions of $\f{X}^0$ \cite{ChiLuChen19}. Thus, convex relaxations are of little use in large-scale applications of the model such as in recommender systems \cite{koren_bell_volinsky}, where even storing the dense matrix $\f{X}^0 \in \R^{d_1 \times d_2}$ is prohibitive. Another important, but less well-known issue is that a convex relaxation is \emph{typically not as data efficient} as certain other algorithms \cite{TannerWei13,BauchNadler20}, i.e., nuclear norm minimization typically necessitates a larger amount of samples $m$ than other methods, measured by the quotient $\rho:= m /(d_1+d_2 -r)$ (oversampling ratio) between $m$ and the number of degrees of freedom of $\f{X}^0$, to identify $\f{X}^0$ correctly \cite{Amelunxen14}.

To overcome these drawbacks, a variety of alternative approaches have been proposed and studied. Among the most popular ones are ``non-convex'' algorithms based on matrix factorization \cite{Burer03} with objective
\begin{equation} \label{eq:UV:recommender}
J(\f{U},\f{V}) \!\!:=\!\! \left\| P_{\Omega}(\f{U}\f{V}^*)\! -\! P_{\Omega}(\f{X}^0) \right\|_F^2\! + \!\frac{\lambda}{2} \left( \|\f{U}\|_F^2\! +\!\|\f{V}\|_F^2\right)
\end{equation}
for $\lambda \geq 0$, which use (projected) gradient descent on the two factor matrices \cite{SunL15,ZhengL16,MaWangChiChen19}, or related methods. These methods are much more scalable than those optimizing a convex rank surrogate, while also allowing for a theoretical analysis, see \cite{ChiLuChen19} for a recent survey. Furthermore, among the most data-efficient methods for low-rank completion are those that minimize a smooth objective over the Riemannian manifold of fixed rank matrices \cite{Vandereycken13,wei_cai_chan_leung,boumal_absil_15,BauchNadler20}. These approaches are likewise scalable and often able to reconstruct the low-rank matrix from fewer samples $m$ than a convex formulation, but strong performance guarantees have remained elusive so far.

In many instances of our problem, such as in the discretization of PDE-based inverse problems with Fredholm equations \cite{cloninger_czaja_bai_basser} or in spectral estimation problems modeled by structured low-rank matrices, it is an additional difficulty that the matrix of interest $\f{X}^0$ is severely \emph{ill-conditioned}, i.e., $\kappa= \sigma_1(\f{X}^0)/\sigma_r(\f{X}^0)$ might be very large (up to $\kappa = 10^{15}$ in spectral estimation \cite{fannjiang_liao}).
\subsection*{Our contribution} In this paper, we propose and analyze the algorithm \emph{Matrix Iteratively Reweighted Least Squares} (\texttt{MatrixIRLS}) that is designed to find low-rank completions that are potentially very ill-conditioned, allowing for a scalable implementation. It is based on the minimization of quadratic models of a sequence of continuously differentiable, \emph{non-convex ``relaxations''} of the rank function $\rank(\f{X})$. We note that, while being severely non-convex, our method is fundamentally different from a typical non-convex approach with an objective such as \eqref{eq:UV:recommender}.

Let $D = \max(d_1,d_2)$ and $d=\min(d_1,d_2)$. From a theoretical angle, we establish that if the $m$ sampled entries are distributed uniformly at random and if $m = \Omega(\mu_0 r D \log(D))$, with high probability, \texttt{MatrixIRLS} \emph{exhibits local convergence} to $\f{X}^0$ \emph{with a local quadratic convergence rate}, where $\mu_0$ is an incoherence factor. This sample complexity does not depend on the condition number $\kappa$, is \emph{optimal} under the sampling model and improves, to the best of our knowledge, on the state-of-the-art of any algorithmic sample complexity result for low-rank matrix completion---albeit, with the caveat that unlike many other results, our guarantee is inherently \emph{local}.
 
Furthermore, we show that the algorithm can be implemented in a per-iteration cost that is sub-quadratic in $D$, without the need of storing dense $(d_1 \times d_2)$ matrices. We show that under the random sampling model, the linear systems to be solved in the main computational step of \texttt{MatrixIRLS} are well-conditioned even close to the ground truth, unlike the systems of comparable IRLS algorithms in the literature \cite{Daubechies10,Fornasier11,Mohan10,KS18}. 
 
The data-efficiency and scalability of our method compared to several state-of-the-art methods is finally explored in numerical experiments involving simulated data.

\section{MatrixIRLS for log-det rank surrogate}\label{surrogate}
The starting point of the derivation of our method is the observation that minimizing a \emph{non-convex surrogate} objective $F$ with more regularity than $\rank(\f{X})$ can lead to effective methods for solving \eqref{rank_equation} that may combine some of the aforementioned properties, e.g., if $F$ is chosen as a \emph{log-determinant} \cite{Fazel02,Candes13}, Schatten-$p$ quasi-norm (with $0 <p <1$) \cite{Giampouras20} or a smoothed clipped absolute deviation (SCAD) of the singular values \cite{Mazumder20}. In particular, it has been observed in several works \cite{Fazel02,Candes13} that optimizing the smoothed log-det objective $\sum_{i=1}^d \log(\sigma_i(\f{X}+\epsilon \f{I}))$ for some $\epsilon > 0$ can lead to less biased solutions than a nuclear norm minimizer---very generally, it can be shown that a minimizer of non-convex spectral functions such as the smoothed log-det objective coincides as least as often with the rank minimizer as the convex nuclear norm minimizer \cite{Foucart18}. Relevant algorithmic approaches to minimize non-convex rank surrogates include iterative thresholding methods \cite{Mazumder20}, iteratively reweighted least squares \cite{Fornasier11,Mohan10,KS18} and iteratively reweighted nuclear norm \cite{Lu2015nonconvex} algorithms.

However, finding the global minimizer of a non-convex and non-smooth rank surrogate can be very challenging, as the existence of sub-optimal local minima and saddle points might deter the success of many local optimization approaches. Furthermore, applications such as in recommender systems \cite{koren_bell_volinsky} require solving very high-dimensional problem instances so that it is impossible to store full matrices, let alone to calculate many singular values of these matrices, ruling out the applicability of many of the existing methods for non-convex surrogates. A major shortcoming is, finally, also that the available convergence theory for such algorithms is still very immature---a convergence theory quantifying the sample complexity or convergence rates is, to the best of our knowledge, not available for any method of this class. 

To derive our method, let now $\epsilon > 0$ and $F_{\epsilon}:\Rdd \to \R$ be the \emph{smoothed log-det objective} defined as
$F_{\epsilon}(\f{X}):= \sum_{i=1}^d f_{\epsilon}(\sigma_i(\f{X}))$
with $d=\min(d_1,d_2)$ and
\begin{equation} \label{eq:smoothing:Fpeps}	
f_{\epsilon}(\sigma) =
\begin{cases}
 \log|\sigma|, & \text{ if } \sigma \geq \epsilon, \\
 \log(\epsilon) + \frac{1}{2}\Big( \frac{\sigma^2}{\epsilon^2}-1\Big), & \text{ if } \sigma < \epsilon.
 \end{cases}
 \end{equation}
It can be shown that that $F_{\epsilon}$ is continuously differentiable with $\epsilon^{-2}$-Lipschitz gradient
\begin{equation*}
 \nabla F_{\epsilon_k}(\f{X}) = \f{U} \dg \bigg(\frac{\sigma_i(\f{X})}{\max(\sigma_i(\f{X}),\epsilon_k)^{2}}\bigg)_{i=1}^d \f{V}^*,
\end{equation*}
where $\f{X}$ has a singular value decomposition $\f{X} = \f{U} \dg\big(\sigma(\f{X})\big) \f{V}^* = \f{U} \dg\big(\sigma \big) \f{V}^*$. It is clear that the optimization landscape of $F_{\epsilon}$ crucially depends on the smoothing parameter $\epsilon$. Instead of minimizing $F_{\epsilon_k}$ directly, our method minimizes, for $k \in \N$, $\epsilon_k >0$ and $\f{X}\hk$ a \emph{quadratic model} 
\[
\begin{split}
Q_{\epsilon_k}(\f{X}|\f{X}^{(k)}) &= F_{\epsilon_k}(\f{X}^{(k)}) + \langle \nabla F_{\epsilon_k}(\f{X}^{(k)}), \f{X} - \f{X}^{(k)}\rangle \\
&+ \frac{1}{2} \langle \f{X}-\f{X}^{(k)},W^{(k)}(\f{X}-\f{X}^{(k)}) \rangle
\end{split}
\]
under the data constraint $P_{\Omega}(\f{X})=P_{\Omega}(\f{X}^0)$, where $W^{(k)}$ is the following operator, describing the precise shape of the quadratic model.
\begin{definition}[Optimal weight operator] \label{def:optimalweightoperator}
Let $\epsilon_k > 0$ and $\f{X}\hk \in \Rdd$ be a matrix with singular value decomposition $\f{X}\hk =  \f{U}_k \dg(\sigma^{(k)}) \f{V}_k^{*}$, i.e., $\f{U}_k \in \R^{d_1 \times d_1}$ and $\f{V}_k \in \R^{d_2 \times d_2}$ are orthonormal matrices. Then we call the linear operator $W\hk: \Rdd \to \Rdd$ the \emph{optimal weight operator} of the $\epsilon_k$-smoothed log-det objective $F_{\epsilon_k}$ of \eqref{eq:smoothing:Fpeps} at $\f{X}\hk$ if for $\f{Z} \in \Rdd$,
\begin{equation} \label{eq:def:W}
	W^{(k)}(\f{Z}) = \f{U}_k \left[\f{H}_k \circ (\f{U}_k^{*} \f{Z} \f{V}_k)\right] \f{V}_k^{*},
\end{equation}
where  $\f{H}_k \in \R^{d_1 \times d_2}$ is a matrix with positive entries such that
$(\f{H}_k)_{ij} := \Big(\max(\sigma_i^{(k)},\epsilon_k) \max(\sigma_j^{(k)},\epsilon_k)\Big)^{-1}$
and $\f{H}_k \circ (\f{U}_k^{*} \f{Z} \f{V}_k)$ denotes the entrywise product of $\f{H}_k$ and $\f{U}_k^{*} \f{Z} \f{V}_k$.
\end{definition}
The weight operator $W^{(k)}$ is a positive, self-adjoint operator with strictly positive eigenvalues that coincide with the entries of the matrix $\f{H}_k \in \Rdd$, and it is easy to verify that $W^{(k)}(\f{X}\hk) =  \nabla F_{\epsilon_k}(\f{X}\hk)$. Based on this, it follows that the minimization of the quadratic model $Q_{\epsilon_k}(\f{X}|\f{X}^{(k)})$ boils down to a minimization of a quadratic form weighted by $W\hk$. This enables us to design the iterative method \emph{Matrix Iteratively Reweighted Least Squares (MatrixIRLS)}, which we describe in \Cref{algo:MatrixIRLS}.

\begin{algorithm}[tb]
\caption{\texttt{MatrixIRLS} for low-rank matrix completion} \label{algo:MatrixIRLS}
\begin{algorithmic}
\STATE{\bfseries Input:}  Set $\Omega$, observations $\f{y} \in \R^m$, rank estimate $\widetilde{r}$.
\STATE Initialize $k=0$, $\epsilon^{(0)}=\infty$ and $W^{(0)} = \Id$.

\FOR{$k=1$ to $K$}
\STATE \textbf{Solve weighted least squares:} Use a \emph{conjugate gradient method} to solve
\begin{equation} \label{eq:MatrixIRLS:Xdef}
\f{X}^{(k)} =\argmin\limits_{\f{X}:P_{\Omega}(\f{X})=\f{y}} \langle \f{X}, W^{(k-1)}(\f{X}) \rangle.
\end{equation}
\STATE \textbf{Update smoothing:} \label{eq:MatrixIRLS:bestapprox} Compute 	$\widetilde{r}+1$-th singular value of $\f{X}\hk$ to update
\begin{equation} \label{eq:MatrixIRLS:epsdef}
\epsilon_k=\min\left(\epsilon_{k-1},\sigma_{\widetilde{r}+1}(\f{X}\hk)\right).
\end{equation}

\STATE \textbf{Update weight operator:} For $r_k := |\{i \in [d]: \sigma_i(\f{X}\hk) > \epsilon_k\}|$, compute the first $r_k$ singular values $\sigma_i\hk := \sigma_i(\f{X}\hk)$ and matrices $\f{U}\hk \in \R^{d_1 \times r_k}$ and $\f{V}\hk \in \R^{d_2 \times r_k}$ with leading $r_k$ left/ right singular vectors of $\f{X}\hk$ to update $W\hk$ defined in \Cref{eq:def:W}. \label{eqdef:Wk} 

\ENDFOR
\STATE{\bfseries Output:} $\f{X}^{(K)}$.
\end{algorithmic}
\end{algorithm}

Apart from the weighted least squares step \eqref{eq:MatrixIRLS:Xdef}, which minimizes the quadratic model $Q_{\epsilon_{k-1}}(\cdot|\f{X}^{(k-1)})$ of $F_{\epsilon_{k-1}}$ for fixed $\epsilon_{k-1}$, an indispensable ingredient of our scheme is the \emph{update of the smoothing parameter $\epsilon_k$}, which is performed in the spirit of smoothing methods for non-smooth objectives \cite{Chen2012smoothing}. In particular, the update rule \eqref{eq:MatrixIRLS:epsdef}, which is similar to the update rule of \cite{KS18}, makes sure that if the rank estimate $\widetilde{r}$ is chosen such that $\widetilde{r} \geq r$, the smoothing parameter $\epsilon_k$ converges to $0$ as the iterates approach a rank-$r$ solution. 

We note that \emph{Iteratively Reweighted Least Squares (IRLS)} methods with certain similarities to \Cref{algo:MatrixIRLS} had been proposed \cite{Fornasier11,Mohan10,KS18} for the minimization of Schatten-$p$ quasi-norms for $0 < p \leq 1$. Comparing the gradients of smoothed Schatten-$p$ quasi-norms and of \eqref{eq:smoothing:Fpeps}, minimizing a smoothed log-det objective can be considered as a limit case for $p \to 0$. Most importantly, however, our algorithm has two distinct, conceptual differences compared to these methods: Firstly, the weight operator of \Cref{def:optimalweightoperator} is able capture the \emph{second-order information} of $F_{\epsilon_k}$, allowing for an interpretation of \texttt{MatrixIRLS} as a saddle-escaping smoothing Newton method, cf. \Cref{sec:Newton:interpretation}, unlike the methods of \cite{Fornasier11,Mohan10,KS18} due to the different structure of their weight operators. Secondly, the interplay of $F_{\epsilon_k}$ and the weight operator $W\hk$ in \Cref{algo:MatrixIRLS} is designed to allow for efficient numerical implementations, cf. \Cref{sec:computational:complexity}. 

Finally, we note that it is non-trivial to show that the quadratic model $Q_{\epsilon_k}(\cdot|\f{X}^{(k)})$ induced by $W\hk$ from \Cref{def:optimalweightoperator} is actually a \emph{majorant} of $F_{\epsilon_k}(\cdot)$ such that $F_{\epsilon_k}(\f{X}) \leq Q_{\epsilon_k}(\f{X}|\f{X}^{(k)})$ for all $\f{X} \in \Rdd$. We defer a proof of this and a proof of the ``optimality'' of the majorant to an upcoming paper.

\section{Computational Complexity} \label{sec:computational:complexity}
A crucial property of \Cref{algo:MatrixIRLS} is that due to the structure of the weight operator \eqref{eq:def:W} and the smoothing update rule \eqref{eq:MatrixIRLS:epsdef}, in fact, the weighted least squares step \eqref{eq:MatrixIRLS:Xdef} can be computed by solving a positive definite linear system of size $(r_k (d_1 +d_2 -r_k)) \times (r_k (d_1 +d_2 -r_k))$, where $r_k$ is the number of singular values of $\f{X}\hk$ that are larger than $\epsilon_k$, which is typically equal or very close to $\widetilde{r}$ (cf. Appendix A). Conceptually, this corresponds to a linear system in the tangent space $T_k$ of the rank-$r_k$ matrix manifold at the best rank-$r_k$ approximation of $\f{X}\hk$,
$
T_k= \left\{ \begin{bmatrix} \f{U}\hk\!\!\! &\!\! \!\f{U}_{\perp}\hk \end{bmatrix} \!\!  \begin{bmatrix} \R^{r_k \times r_k} \!\!&\!\!\! \R^{r_k(d_2-r_k)} \\ \R^{(d_1-r_k)r_k}\! \!&\!\! \!\f{0} \end{bmatrix} \!\! \begin{bmatrix} \f{V}\hk \!\!\!&\!\!\! \f{V}_{\perp}\hk \end{bmatrix}^* \right\}.
$

We note that in our implementation, it is never necessary to compute more than $r_k$ singular vector pairs and singular values of $\f{X}\hk$, and $\f{X}\hk$ can be represented as a sum of a sparse matrix and a matrix in $T_k$, cf. \Cref{thm:MatrixIRLS:computationalcost:Xkk}.
Thus, when using an iterative solver such as \emph{conjugate gradients} to solve the linear system, we obtain an implementation of \texttt{MatrixIRLS} with a time and space complexity of the same order as for state-of-the-art first-order algorithms based on matrix factorization (i.e., of Burer-Monteiro type) \cite{chen_chi18}. We refer to the supplementary materials (Appendix A) for details and a proof.
\begin{theorem} \label{thm:MatrixIRLS:computationalcost:Xkk}
	Let $\f{X}\hk \in \Rdd$ be the $k$-th iterate of \texttt{MatrixIRLS} for an observation vector $\f{y} \in \R^m$ and $\widetilde{r}=r$. Assume that $\sigma_i\hk \leq \epsilon_k$ for all $i > r$ and $\sigma_r\hk > \epsilon_k$. Then an implicit representation of the new iterate $\f{X}\hkk \in \R^{d_1 \times d_2}$ can be calculated in a \emph{time complexity} of 
	\[
	 O \left( (m r + r^2 D) \cdot N_{\text{CG\_inner}} \right),
	\]
where $N_{\text{CG\_inner}}$ is the number of inner iterations used in the conjugate gradient method and $D=\max(d_1,d_2)$. More precisely, $\f{X}\hkk$ can be represented as
\[
\f{X}\hkk = P_{\Omega}^*(\f{r}_{k+1})  +\f{U}\hk \f{M}_{1}^{(k+1)*} + \f{M}_2^{(k+1)} \f{V}^{(k)*},
\]
where $\f{r}_{k+1} \in \R^m$, $\f{M}_{1}^{(k+1)} \in \R^{d_2 \times r}$ and $\f{M}_2^{(k+1)} \in \R^{d_1 \times r}$,
i.e., with a \emph{space complexity} of $O ( m+ r D)$.
\end{theorem}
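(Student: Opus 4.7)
The plan is to recast \eqref{eq:MatrixIRLS:Xdef} through its KKT conditions as an $m \times m$ dual linear system solvable by CG, and to exploit the fact that, under the stated assumption on singular values, the weight operator $W^{(k)}$ degenerates to $\epsilon_k^{-2}\Id$ off of the rank-$r$ tangent space $T_k$. Writing the Lagrangian of the equality-constrained quadratic program \eqref{eq:MatrixIRLS:Xdef}, one obtains $W^{(k)}(\f{X}^{(k+1)}) = P_{\Omega}^*(\f{z})$ for a dual variable $\f{z} \in \R^m$ satisfying $P_{\Omega} (W^{(k)})^{-1} P_{\Omega}^*(\f{z}) = \f{y}$. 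This is a symmetric positive-definite $m \times m$ system since $W^{(k)}$ is SPD (all entries of $\f{H}_k$ are strictly positive) and $P_{\Omega}$ has full row rank on its image.

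The key structural fact is a splitting of $(W^{(k)})^{-1}$. Because $\sigma_i^{(k)} \leq \epsilon_k$ for $i > r$ and $\sigma_r^{(k)} > \epsilon_k$, the entries of the Hadamard weight $\f{H}_k$ equal $\epsilon_k^{-2}$ on the $(d_1-r) \times (d_2-r)$ block indexing singular directions outside $T_k$. Consequently $W^{(k)}$ acts as $\epsilon_k^{-2}\Id$ on $T_k^{\perp}$, and hence
\[
(W^{(k)})^{-1} \;=\; \epsilon_k^{2}\, \Id \;+\; \bigl( \widetilde{W}_k - \epsilon_k^{2}\,\Id \bigr)\, P_{T_k},
\]
where $\widetilde{W}_k$ is the entrywise (in the SVD basis of $\f{X}^{(k)}$) inverse of $W^{(k)}|_{T_k}$, described by $O(rD)$ Hadamard coefficients. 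Applying this identity to the sparse matrix $P_{\Omega}^*(\f{z})$ yields
\[
\f{X}^{(k+1)} \;=\; \epsilon_k^{2}\, P_{\Omega}^*(\f{z}) \;+\; \bigl( \widetilde{W}_k - \epsilon_k^{2}\,\Id \bigr)\, P_{T_k}\bigl( P_{\Omega}^*(\f{z}) \bigr),
\]
where the second summand lies in $T_k$ and therefore admits a parametrization $\f{U}^{(k)} \f{M}_1^{(k+1)*} + \f{M}_2^{(k+1)} \f{V}^{(k)*}$ with $\f{M}_1^{(k+1)} \in \R^{d_2 \times r}$ and $\f{M}_2^{(k+1)} \in \R^{d_1 \times r}$. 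Setting $\f{r}_{k+1} := \epsilon_k^{2}\, \f{z}$ recovers the claimed decomposition, and the triple $(\f{r}_{k+1}, \f{M}_1^{(k+1)}, \f{M}_2^{(k+1)})$ occupies $O(m + rD)$ storage.

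For the time complexity, I would count the cost of a single CG matrix-vector product $\f{z}' \mapsto P_{\Omega} (W^{(k)})^{-1} P_{\Omega}^*(\f{z}')$. Forming the sparse matrix $P_{\Omega}^*(\f{z}')$ is $O(m)$. Computing $P_{T_k} P_{\Omega}^*(\f{z}')$ reduces to the products $\f{U}^{(k)*} P_{\Omega}^*(\f{z}')$ and $P_{\Omega}^*(\f{z}') \f{V}^{(k)}$, each of which costs $O(mr)$ by exploiting the $m$ nonzeros of $P_{\Omega}^*(\f{z}')$. Applying $\widetilde{W}_k - \epsilon_k^{2}\Id$ to the resulting element of $T_k$ reduces to Hadamard scaling of its $O(rD)$ tangent-space coefficients, together with a small $r \times r$ change of basis in the ``top-left block,'' costing $O(r^2 D)$ in total. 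Finally, evaluating $P_{\Omega}$ on the sum of a sparse matrix and a tangent-space matrix $\f{U}^{(k)} \f{A}^{*} + \f{B}\, \f{V}^{(k)*}$ is another $O(mr)$ by slicing the rows of $\f{U}^{(k)}$ and $\f{V}^{(k)}$ indexed by $\Omega$. Aggregating gives $O(mr + r^2 D)$ per CG iteration, and multiplying by $N_{\text{CG\_inner}}$ yields the stated bound; the output is read off from the final dual iterate $\f{z}$ using the explicit formulas above.

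The main obstacle is the careful bookkeeping needed to guarantee that every intermediate quantity in the CG loop is stored either as a length-$m$ sparse vector or as $O(rD)$ tangent-space coefficients, so that no dense $d_1 \times d_2$ matrix is ever instantiated. Related points to verify are that the weight inverse $\widetilde{W}_k$ on $T_k$ can be applied using only the thin factors $\f{U}^{(k)}, \f{V}^{(k)}$ together with the singular values $\sigma_i^{(k)}$ (never touching the complementary bases $\f{U}_{\perp}^{(k)}, \f{V}_{\perp}^{(k)}$), and that the projection $P_{T_k}$ applied to a sparse matrix produces a representation compatible with this thin parametrization, closing the CG recursion without any hidden $\Theta(d_1 d_2)$ cost.
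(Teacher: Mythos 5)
Your argument is correct and recovers both the stated representation and the per-iteration cost, but it solves a genuinely different linear system than the paper does. You apply CG directly to the $m\times m$ dual system $P_{\Omega}(W^{(k)})^{-1}P_{\Omega}^*(\f{z})=\f{y}$ coming from the Lagrangian, and then distribute $(W^{(k)})^{-1}=\epsilon_k^{2}\f{I}+P_{T_k}\big(\f{D}_{S_k}^{-1}-\epsilon_k^{2}\f{I}_{S_k}\big)P_{T_k}^{*}$ over the sparse matrix $P_{\Omega}^*(\f{z})$. The paper instead starts from the same explicit dual formula \eqref{eq:weightedleastsquares:formula}, but first applies a Sherman--Morrison--Woodbury identity \eqref{eq:MatrixIRLS:Woodbury:application} to transfer the inversion to the $r(d_1+d_2-r)\times r(d_1+d_2-r)$ system \eqref{eq:gamma:system} posed in the intrinsic tangent-space coordinates $S_k$, and runs CG there (\Cref{algo:MatrixIRLS:mainstep:implementation}, \Cref{lemma:weightedleastsquares:implement}). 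Both systems are SPD, each matvec costs $O(mr+r^2D)$ after the same thin-factor bookkeeping (and you are right that the Hadamard weights act as row/column scalings on the triple $\Gamma_1,\Gamma_2,\Gamma_3$, so $\f{U}_{\perp}^{(k)},\f{V}_{\perp}^{(k)}$ are never needed), and the final $\f{r}_{k+1}=\epsilon_k^{2}\f{z}$ together with the tangent-space part gives exactly the claimed decomposition, so the theorem's bound $O\big((mr+r^2D)\,N_{\text{CG\_inner}}\big)$ holds under your route as well. What the Woodbury step buys is conditioning, not merely dimension reduction: your dual matrix equals $\epsilon_k^{2}\f{I}_m+P_{\Omega}P_{T_k}\f{C}P_{T_k}^{*}P_{\Omega}^{*}$, which has $m-r(d_1+d_2-r)$ eigenvalues pinned at $\epsilon_k^{2}$ while its largest eigenvalue is of order $\tfrac{m}{d_1 d_2}(\sigma_1^{(k)})^2$, so its condition number grows without bound as $\epsilon_k\to 0$ and $N_{\text{CG\_inner}}$ would have to grow near the solution; the transferred system on $S_k$ is shown in \Cref{thm:wellconditioning} to satisfy $\kappa(\f{A}_k)\le 4$, making $N_{\text{CG\_inner}}$ an absolute constant and the stated bound actually sharp.
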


\Cref{thm:MatrixIRLS:computationalcost:Xkk} illustrates the computational advantage of \texttt{MatrixIRLS} compared to previous iteratively reweighted least squares algorithms for low-rank matrix recovery problems \cite{Fornasier11,Mohan10,KS18}, which all require the storage and updates of full $(d_1 \times d_2)$-matrices and the calculation of singular value decompositions of these.

According to \Cref{thm:MatrixIRLS:computationalcost:Xkk}, since $P_{\Omega}^*(\f{r}_{k+1}) \in \Rdd$ is $m$-sparse, $\f{X}\hkk$ can be seen a sum of a sparse and two rank-$r$ matrices. Intuitively, this representation is possible as the weight operator $W^{(k)}$ of \Cref{def:optimalweightoperator} can be written as ``identity + diagonal on $T_k$'', and due to the Sherman-Morrison-Woodbury formula applied to the inverse in $\f{X}\hkk =  (W\hk)^{-1}P_{\Omega}^*\left(P_{\Omega}(W\hk)^{-1} P_{\Omega}^*\right)^{-1}(\f{y})$, which is an explicit representation of the solution of \eqref{eq:MatrixIRLS:Xdef}. 

As a result, fast matrix-vector multiplications can be used in methods such as Lanczos bidiagonalization or randomized Block Krylov \cite{MuscoMusco15} to compute $r_{k+1}$ singular values and vectors of $\f{X}\hkk$ in step 3 of \Cref{algo:MatrixIRLS}.

\section{Theoretical Analysis} \label{sec:convergence}
This section sheds light on several theoretical aspects of \Cref{algo:MatrixIRLS}.
\subsection{Local Convergence with Superlinear Rate \& Conditioning of System Matrix}
In order to obtain a theoretical understanding of the generic behavior of \texttt{MatrixIRLS}, we consider the canonical uniform random sampling model \cite{CR09,recht,Chen15} where the sampling set  $\Omega = (i_{\ell}, j_{\ell})_{\ell=1}^m \subset [d_1] \times [d_2]$ consists of $m$ double indices that are drawn uniformly at random without replacement. Not \emph{each} rank-$r$ matrix $\f{X}^0 \in \Rdd$ is expected to be identifiable from a small number of samples $m$ under this sampling model. We quantify the alignment of a matrix with the standard basis of $\Rdd$ by the following notion of incoherence, which is slightly weaker than related conditions of \cite{recht,Chen15}.
 \begin{definition}\label{def:incoherence}
We say that a rank-$r$ matrix $\f{X} \in \Rdd$ with singular value decomposition $\f{X} = \f{U} \dg(\sigma) \f{V}^*$, $\f{U} \in \R^{d_1 \times r}$, $\f{V} \in \R^{d_2 \times r}$, is \emph{$\mu_0$-incoherent} if there exists a constant $\mu_0 \geq 1$ such that
\begin{equation}\label{eq:MatrixIRLS:incoherence}
\max_{1 \leq i \leq d_1, 1 \leq j \leq d_2} \|\mathcal{P}_T (e_i e_j^*)\|_F   \leq  \sqrt{\mu_0 r \frac{d_1 + d_2}{d_1 d_2}}, 
\end{equation}
where $T = T_{\f{X}} =  \{ \f{U} \f{M}^* + \widetilde{\f{M}} \f{V}^* :~ \f{M} \in \R^{d_2 \times r}, ~ \widetilde{\f{M}} \in \R^{d_1 \times r} \}$ is the tangent space onto the rank-$r$ matrix manifold at $\f{X}$ and $\mathcal{P}_T$ is the projection operator onto $T$ .
\end{definition}
With the notation that $\|\f{X}\|_{S_\infty}= \sigma_1(\f{X})$ denotes the \emph{spectral norm} or \emph{Schatten-$\infty$ norm} of a matrix $\f{X}$, we obtain the following local convergence result.
\begin{theorem}[Local convergence of \texttt{MatrixIRLS} with Quadratic Rate] \label{cor:MatrixIRLS:localconvergence:matrixcompletion}
Let $\f{X}^0 \in \Rdd$ be a matrix of rank $r$ that is $\mu_0$-incoherent, and let $P_\Omega: \Rdd  \rightarrow  \R^{m}$ be the subsampling operator corresponding to an index set $\Omega=(i_{\ell},j_{\ell})_{\ell=1}^m \subset [d_1] \times [d_2]$ that is drawn uniformly without replacement. If the sample complexity fulfills
$m \gtrsim \mu_0 r (d_1 + d_2) \log(d_1 + d_2)$,
then with high probability, the following holds: If the output matrix $\f{X}^{(k)}\in \Rdd$ of the $k$-th iteration of \texttt{MatrixIRLS} with inputs $P_{\Omega}$, $\f{y}=P_{\Omega}(\f{X}^0)$ and $\widetilde{r}=r$ updates the smoothing parameter in \eqref{eq:MatrixIRLS:epsdef} such that $\epsilon_k = \sigma_{r+1}(\f{X}\hk)$ and fulfills
\begin{equation} \label{eq:MatrixIRLS:closeness:assumption}
\|\f{X}\hk - \f{X}^0\|_{S_{\infty}} \lesssim \min\left( \sqrt{\frac{\mu_0 r}{d}}, \frac{\mu_0}{d \log(D) \kappa}\right)\sigma_r(\f{X}^0),
\end{equation}
where $\kappa=\sigma_1(\f{X}^0)/\sigma_r(\f{X}^0)$, then the \emph{local convergence rate is quadratic} in the sense that 
 $\|\f{X}^{(k+1)}-\f{X}^0\|_{S_{\infty}}\leq \min(\mu \|\f{X}\hk-\f{X}^0\|_{S_{\infty}}^{2} , \|\f{X}\hk-\f{X}^0\|_{S_{\infty}}) $ with $\mu \leq \frac{ d \log(D) }{\mu_0 \sigma_r(\f{X}^0)} \kappa$, and furthermore $\f{X}^{(k+\ell)} \xrightarrow{\ell \to \infty} \f{X}^0$ if additionally $\|\f{X}\hk - \f{X}^0\|_{S_{\infty}} \lesssim  \min\left( \sqrt{\frac{\mu_0 r}{d}}, \frac{\mu_0^{3/2} r^{1/2}}{d^2 \log(D)^{3/2} \kappa}\right)\sigma_r(\f{X}^0)$.
\end{theorem}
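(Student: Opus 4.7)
Write $\Delta := \f{X}^{(k+1)} - \f{X}^0$ and let $T_k$ denote the tangent space to the rank-$r$ manifold at the best rank-$r$ approximation of $\f{X}^{(k)}$; by Weyl's inequality and \eqref{eq:MatrixIRLS:closeness:assumption}, $\sigma_r(\f{X}^{(k)}) > \sigma_{r+1}(\f{X}^{(k)}) = \epsilon_k$, so $T_k$ is well-defined with $r_k = r$. The first-order optimality conditions for the weighted least-squares step \eqref{eq:MatrixIRLS:Xdef} give $W^{(k)}\f{X}^{(k+1)} \in \Range(P_\Omega^*)$, and since $\f{X}^0$ is feasible we have $\Delta \in \Ker(P_\Omega)$; pairing these yields the central identity
\[
\langle \Delta, W^{(k)} \Delta \rangle \;=\; -\langle W^{(k)}\f{X}^0, \Delta \rangle.
\]
The plan is to bound each side by splitting $\Delta = P_{T_k}\Delta + P_{T_k^\perp}\Delta$ and exploiting that $W^{(k)}$ has markedly different eigenvalue scales on $T_k$ and $T_k^\perp$.

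For the normal component, Definition \ref{def:optimalweightoperator} shows $W^{(k)}$ acts on $T_k^\perp$ as the scalar $\epsilon_k^{-2}$, so $\langle\Delta, W^{(k)}\Delta\rangle \geq \epsilon_k^{-2}\|P_{T_k^\perp}\Delta\|_F^2$. For the right-hand side, decompose $\f{X}^0 = P_{T_k}\f{X}^0 + P_{T_k^\perp}\f{X}^0$; the Wedin-type perturbation estimate
\[
\|P_{T_k^\perp}\f{X}^0\|_F \;=\; \bigl\|(\f{I}-\f{U}_k\f{U}_k^*)\f{X}^0(\f{I}-\f{V}_k\f{V}_k^*)\bigr\|_F \;\lesssim\; \frac{\sigma_1(\f{X}^0)}{\sigma_r(\f{X}^0)^2}\,\|\f{X}^{(k)}-\f{X}^0\|_{S_\infty}^2
\]
follows from standard $\sin\Theta$ bounds on the leading singular subspaces of $\f{X}^{(k)}$ and $\f{X}^0$. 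Paired with the Weyl bound $\epsilon_k \leq \|\f{X}^{(k)}-\f{X}^0\|_{S_\infty}$ and an eigenvalue analysis of $W^{(k)}$ restricted to $T_k$ (whose largest eigenvalue on the ``mixed'' block is of order $(\sigma_r^{(k)}\epsilon_k)^{-1}$), this transfers the quadratic smallness to $\|P_{T_k^\perp}\Delta\|_F$ up to a factor $\kappa$.

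For the tangent component, $P_\Omega(\Delta)=0$ gives $P_\Omega(P_{T_k}\Delta) = -P_\Omega(P_{T_k^\perp}\Delta)$. Under $m \gtrsim \mu_0 r D \log D$ uniformly random samples, a noncommutative Bernstein argument \cite{Recht10,Chen15} ensures that $\tfrac{d_1 d_2}{m} P_{T_0} P_\Omega^* P_\Omega P_{T_0} \approx P_{T_0}$ on $T_0$ with high probability, and an elementary perturbation inherits this RIP on the nearby $T_k$, producing $\|P_{T_k}\Delta\|_F \lesssim \sqrt{d_1 d_2/m}\,\|P_{T_k^\perp}\Delta\|_F$. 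Assembling the normal and tangent bounds, converting Frobenius to spectral norm via $\|\cdot\|_{S_\infty} \leq \|\cdot\|_F$ and $\|\f{X}^{(k)}-\f{X}^0\|_F \lesssim \sqrt{r}\,\|\f{X}^{(k)}-\f{X}^0\|_{S_\infty}$ (valid since the difference is effectively of rank $\leq 2r$), and substituting $\kappa = \sigma_1/\sigma_r$ and $d_1 d_2/m \sim d/(\mu_0 \log D)$, yields the quadratic rate $\|\Delta\|_{S_\infty} \leq \mu\,\|\f{X}^{(k)}-\f{X}^0\|_{S_\infty}^2$ with $\mu \lesssim d\log(D)\kappa/(\mu_0\sigma_r(\f{X}^0))$.

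The alternative bound $\|\Delta\|_{S_\infty} \leq \|\f{X}^{(k)}-\f{X}^0\|_{S_\infty}$ is automatic once \eqref{eq:MatrixIRLS:closeness:assumption} forces $\mu\|\f{X}^{(k)}-\f{X}^0\|_{S_\infty} \leq 1$, and iterative convergence under the strengthened smallness hypothesis is an induction: the quadratic bound becomes a contraction with rate $\leq 1/2$ and \eqref{eq:MatrixIRLS:closeness:assumption} is preserved along the iterates. The main obstacle is extracting the quadratic rate without losing powers of $\epsilon_k^{-1}$: the $\epsilon_k^{-2}$-amplification on $T_k^\perp$ has to be balanced precisely against the mixed block of $W^{(k)}$ on $T_k$, whose eigenvalues also scale as $(\sigma_r^{(k)}\epsilon_k)^{-1}$ and enter the upper bound through $\langle W^{(k)}P_{T_k}\f{X}^0, \Delta\rangle$. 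A secondary difficulty is establishing the RIP on the data-dependent tangent space $T_k$ while tracking $\mu_0$ and $\kappa$ through the $T_k \leftrightarrow T_0$ transfer; the fact that the RIP on $T_0$ sees no $\kappa$ is what allows the $\kappa$-independent sample complexity, with $\kappa$ entering $\mu$ only through the conditioning of $W^{(k)}$.
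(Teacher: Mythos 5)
Your proposal follows essentially the same route as the paper: the optimality identity $\langle \f{\eta}\hkk, W\hk(\f{\eta}\hkk)\rangle = -\langle W\hk(\f{X}^0), \f{\eta}\hkk\rangle$ from \Cref{lemma:MatrixIRLS:Xk:optimality}, the tangent-space null-space inequality $\|\f{\eta}\|_F \lesssim \sqrt{d\log(D)/(\mu_0 r)}\,\|\mathcal{P}_{T_k^\perp}(\f{\eta})\|_F$ for $\f{\eta} \in \ker P_\Omega$ from the local restricted isometry of \Cref{lemma:MatrixIRLS:localRIP}, Wedin's perturbation bound (\Cref{lemma:Wedinbound}) to cancel the $\epsilon_k^{-1}$ and $\epsilon_k^{-2}$ scales in $W\hk$, and the Weyl inequality $\epsilon_k = \sigma_{r+1}(\f{X}\hk) \leq \|\f{X}\hk - \f{X}^0\|_{S_\infty}$ to produce the quadratic rate; this mirrors \Cref{lemma:MatrixIRLS:localconvergence:1}, \Cref{lemma:MatrixIRLS:localconvergence:2} and the wrap-up in \Cref{section:wrappingproof}.

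Two of your steps need correction. First, the conversion $\|\f{X}\hk-\f{X}^0\|_F \lesssim \sqrt{r}\,\|\f{X}\hk-\f{X}^0\|_{S_\infty}$, justified by ``the difference is effectively of rank $\leq 2r$'', is false: $\f{X}\hk$ is a sparse-plus-rank-$r$ matrix (\Cref{thm:MatrixIRLS:computationalcost:Xkk}) with no rank bound, so $\|\f{X}\hk-\f{X}^0\|_F$ can be as large as $\sqrt{d}\,\|\f{X}\hk-\f{X}^0\|_{S_\infty}$. You do not actually need this step: the paper instead invokes H\"older's inequality for Schatten norms, $\langle W\hk(\f{X}^0),\f{\eta}\hkk\rangle \leq \|W\hk(\f{X}^0)\|_{S_1}\|\f{\eta}\hkk\|_{S_\infty}$, and the $\sqrt{r}$ factor is applied to $W\hk(\f{X}^0)$ (which is rank $\leq r$ because $\f{H}_k$ is rank one), never to the error matrix. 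Second, the two-way split $\f{X}^0 = \mathcal{P}_{T_k}(\f{X}^0) + \mathcal{P}_{T_k^\perp}(\f{X}^0)$ is too coarse: $W\hk$ restricted to $T_k$ has eigenvalues ranging from $(\sigma_1\hk)^{-2}$ (diagonal block) up to $(\sigma_r\hk\,\epsilon_k)^{-1}$ (mixed block), and bounding the whole $\mathcal{P}_{T_k}(\f{X}^0)$ contribution by the largest eigenvalue would leave a residual $\epsilon_k^{2}\cdot\epsilon_k^{-1}=\epsilon_k$, i.e., only a linear rate. The paper's \Cref{lemma:MatrixIRLS:localconvergence:2} therefore carries out a finer three-way block split (terms \textsc{I}, \textsc{II}, \textsc{III}): on the diagonal block the weight $\f{H}\hk$ is balanced against $\f{\Sigma}_0$ to give $\sim\sigma_r(\f{X}^0)^{-1}$ with no $\epsilon_k^{-1}$ at all, the mixed block gains one factor of Wedin, and the normal block two, so that after multiplying by $\epsilon_k^{2}$ all three contributions are genuinely $O(\|\f{\eta}\hk\|_{S_\infty}^2)$.
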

While a comparable local convergence result had been obtained for an IRLS algorithm for (non-convex) Schatten-$p$ minimization \cite{KS18}, that result is \emph{not} applicable for matrix completion, as the proof relied on a \emph{null space property} \cite{Recht11} of the measurement operator, which is not fulfilled by $P_{\Omega}$ since there are always rank-ones matrices in the null space of the entry-wise operator $P_{\Omega}$.

Unlike the theory of other algorithms, the sample complexity assumption of \Cref{cor:MatrixIRLS:localconvergence:matrixcompletion} is \emph{optimal} as it matches a well-known lower bound for this sampling model \cite{CandesTao10} that is necessary for unique identifiability. Among the weakest sufficient conditions for existing algorithms are $m \gtrsim \mu_0 r (d_1+d_2) \log^2(d_1 +d_2)$ for nuclear norm minimization \cite{Chen15}, $m \gtrsim \mu_0 \kappa^{14} r^2 (d_1+d_2) \log^2(d_1 +d_2)$ for gradient descent \cite{ChenLiuLi19} on a variant of \eqref{eq:UV:recommender} and $m \gtrsim \kappa^6 (d_1+d_2) r^2 \log (d_1+d_2)$ required random samples for the Riemannian gradient descent algorithm of \cite{wei_cai_chan_leung}.
On the other hand, in contrast to other results, \Cref{cor:MatrixIRLS:localconvergence:matrixcompletion} only quantifies \emph{local} convergence.

The following theorem implies that iterative solvers are indeed able to efficiently solve the linear system underlying \eqref{eq:MatrixIRLS:Xdef} up to high accuracy in few iterations. It suggests that $N_{\text{CG\_inner}}$ of \Cref{thm:MatrixIRLS:computationalcost:Xkk} can be chosen as an absolute constant.

\begin{theorem}[Well-conditioning of system matrices of \texttt{MatrixIRLS}] \label{thm:wellconditioning}
In the setup and sampling model of \Cref{thm:MatrixIRLS:computationalcost:Xkk}, if $m \gtrsim \mu_0 r (d_1 + d_2) \log(d_1 + d_2)$, the following holds with high probability: If $\epsilon_k=\sigma_{r+1}(\f{X}\hk) < \sigma_{r}(\f{X}\hk)$ and if $\|\f{X}\hk - \f{X}^0\|_{S_{\infty}} \lesssim \min \left(\sqrt{\frac{\mu_0 r}{d}}, \frac{1}{4}\right) \sigma_r(\f{X}^0)$, the spectrum $\lambda(\f{A}_k)$ of the linear system matrix $\f{A}_k \in \R^{r(d_1+d_2-r) \times r(d_1+d_2-r)}$ of the weighted least squares step \eqref{eq:MatrixIRLS:Xdef} of \texttt{MatrixIRLS} satisfies  $\lambda(\f{A}_k) \subset \frac{m}{d_1 d_2}\left[\frac{6}{10}; \frac{24}{10}\right]$, and thus, the condition number of $\f{A}_k$ fulfills
$\kappa(\f{A}_k) \leq 4$.
\end{theorem}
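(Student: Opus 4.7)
The plan is to reduce the analysis of $\f{A}_k$ to the well-known concentration of the tangent-space restricted sampling operator $\Pi_{T_k}:=\mathcal{P}_{T_k}P_\Omega^*P_\Omega\mathcal{P}_{T_k}\big|_{T_k}$ around a multiple of the identity on $T_k=T_{\f{X}\hk}$. First, I would make the Sherman-Morrison-Woodbury reduction outlined after \Cref{thm:MatrixIRLS:computationalcost:Xkk} explicit: since $W\hk$ is block-diagonal with respect to $\Rdd=T_k\oplus T_k^\perp$ with $W\hk\big|_{T_k^\perp}=\epsilon_k^{-2}\mathrm{Id}$, its inverse splits as $(W\hk)^{-1}=\epsilon_k^2\,\mathrm{Id}+\mathcal{I}_{T_k}\tilde E\,\mathcal{I}_{T_k}^*$, with $\tilde E\colon T_k\to T_k$ positive and diagonal in the outer-product basis $\{u_i v_j^*\}$ of eigenvectors of $W\hk$ (eigenvalues $\tau_i\tau_j-\epsilon_k^2$, $\tau_i:=\max(\sigma_i(\f{X}\hk),\epsilon_k)$). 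Applying SMW to the $m\times m$ dual KKT system $P_\Omega(W\hk)^{-1}P_\Omega^*\mu=\f{y}$ associated with \eqref{eq:MatrixIRLS:Xdef} reduces the computational bottleneck to an $r(d_1+d_2-r)$-dimensional positive-definite system on $T_k$, whose matrix $\f{A}_k$ splits, after an appropriate normalization, into a dominant term proportional to $\Pi_{T_k}$ and a smoothing perturbation expressed in terms of $\tilde E^{-1}$.

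To control the bulk, I would invoke a matrix Bernstein inequality in the spirit of \cite{recht,Chen15}: the $\mu_0$-incoherence of $\f{X}^0$ from \Cref{def:incoherence} together with $m\gtrsim\mu_0 r(d_1+d_2)\log(d_1+d_2)$ yield, with high probability, $\|\tfrac{d_1 d_2}{m}\mathcal{P}_{T_0}P_\Omega^*P_\Omega\mathcal{P}_{T_0}-\mathcal{P}_{T_0}\|_{\mathrm{op}}\leq\delta$ for any prescribed constant $\delta>0$, where $T_0:=T_{\f{X}^0}$. I would then transfer this to $T_k$ via Wedin's $\sin\Theta$ theorem: the closeness assumption $\|\f{X}\hk-\f{X}^0\|_{S_\infty}\lesssim\min(\sqrt{\mu_0 r/d},1/4)\sigma_r(\f{X}^0)$ forces $\|\mathcal{P}_{T_k}-\mathcal{P}_{T_0}\|_{\mathrm{op}}\lesssim\|\f{X}\hk-\f{X}^0\|_{S_\infty}/\sigma_r(\f{X}^0)$, and a triangle-inequality argument then gives $\|\tfrac{d_1 d_2}{m}\Pi_{T_k}-\mathrm{Id}_{T_k}\|_{\mathrm{op}}\leq\delta'$ for a small constant~$\delta'$.

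For the smoothing perturbation, Weyl's inequality combined with the closeness assumption yield $\sigma_r(\f{X}\hk)\geq\tfrac{3}{4}\sigma_r(\f{X}^0)$, hence $\epsilon_k=\sigma_{r+1}(\f{X}\hk)\leq\tfrac{1}{4}\sigma_r(\f{X}^0)\leq\tfrac{1}{3}\sigma_r(\f{X}\hk)$; consequently the eigenvalues $\epsilon_k^2/(\tau_i\tau_j-\epsilon_k^2)$ of $\epsilon_k^2\tilde E^{-1}$ are uniformly bounded on both the rank-$r$ and the mixed tangent blocks. Assembling the bounds gives $\lambda(\f{A}_k)\subset\tfrac{m}{d_1 d_2}[\tfrac{6}{10};\tfrac{24}{10}]$ and thus $\kappa(\f{A}_k)\leq 4$. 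The main obstacle is reconciling the scales of the two contributions to $\f{A}_k$: while the bulk term $\Pi_{T_k}$ scales like $m/(d_1 d_2)\ll 1$, a naive bound on $\epsilon_k^2\tilde E^{-1}$ is only of order~$1$. Achieving a spectrum within the stated interval requires a carefully chosen normalization in the SMW reduction---plausibly a symmetric rescaling that folds $\tilde E$ into the tangent-space operator---and a delicate handling of the mixed blocks where $\tau_i\tau_j=\epsilon_k\sigma_i$.
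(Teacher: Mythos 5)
Your overall decomposition matches the paper's: $\f{A}_k = \f{D}_k + P_{T_k}^* P_\Omega^* P_\Omega P_{T_k}$ with $\f{D}_k = \epsilon_k^2(\f{D}_{S_k}^{-1} - \epsilon_k^2 \f{I}_{S_k})^{-1}$ arising from the Sherman--Morrison--Woodbury reduction, and the treatment of the bulk term $P_{T_k}^* P_\Omega^* P_\Omega P_{T_k}$ via the local restricted isometry at $T_0$ transferred to $T_k$ with Wedin is exactly what the paper does (its Lemma~\ref{eq:MatrixIRLS:tangentspace:localRIP:perturbation}). Where the proposal breaks down is precisely at the scale mismatch you flag in the last paragraph, and the fix you gesture at---a ``symmetric rescaling that folds $\tilde E$ into the tangent-space operator''---is not the correct resolution and is not what the paper does; no such normalization appears anywhere in the argument.

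The actual resolution is elementary once one uses a sharper bound on $\epsilon_k$. Your bound $\epsilon_k \leq \tfrac{1}{3}\sigma_r(\f{X}\hk)$ is too weak and, as you note, only gives $\|\f{D}_k\|_{S_\infty} = O(1)$. But the proximity hypothesis gives $\epsilon_k = \sigma_{r+1}(\f{X}\hk) \leq \|\f{X}\hk-\f{X}^0\|_{S_\infty} \lesssim \tfrac{\mu_0 r}{d}\,\sigma_r(\f{X}^0)$ (the paper uses this linear scaling in $\mu_0 r/d$ in its Appendix~C proof), whence
\[
\|\f{D}_k\|_{S_\infty} \leq \frac{\epsilon_k}{\sigma_r(\f{X}\hk)-\epsilon_k} \leq \frac{\epsilon_k}{\tfrac{3}{4}\sigma_r(\f{X}^0)-\epsilon_k} \lesssim \frac{\mu_0 r}{d}.
\]
Now the sample complexity hypothesis $m \gtrsim \mu_0 r (d_1+d_2)\log(d_1+d_2)$ is applied \emph{in reverse} to convert this into the desired scale: since $d(d_1+d_2)\geq d_1 d_2$, one has $\tfrac{\mu_0 r}{d}\lesssim \tfrac{m}{d_1 d_2\,\log D}\leq \tfrac{m}{d_1 d_2}$, so $\|\f{D}_k\|_{S_\infty}\leq \tfrac{m}{d_1 d_2}$ for a small enough implicit constant in the proximity hypothesis. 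With $\|P_{T_k}^*P_\Omega^*P_\Omega P_{T_k}-\tfrac{m}{d_1 d_2}\f{I}\|_{S_\infty}\leq \tfrac{4}{10}\tfrac{m}{d_1 d_2}$ from the local RIP, one then bounds $\|\f{A}_k-\tfrac{3}{2}\tfrac{m}{d_1 d_2}\f{I}\|_{S_\infty}\leq \tfrac{9}{10}\tfrac{m}{d_1 d_2}$ directly, giving the stated spectral interval. In short: you had all the ingredients, but the closing move is to realize that the smoothing perturbation is not merely $O(1)$---it is $O(\mu_0 r/d)$ by the proximity assumption, and $\mu_0 r/d \leq m/(d_1 d_2)$ is guaranteed by the very sample complexity lower bound that is usually used to make $m$ feel small. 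No rescaling of $\f{A}_k$ is needed.
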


\Cref{thm:wellconditioning} shows that \texttt{MatrixIRLS} is able to overcome a common problem of many IRLS algorithms for related problems: Unlike the methods of \cite{Daubechies10,Fornasier16,Mohan10,Fornasier11,KS18}, does not suffer from ill-conditioned linear systems close to a low-rank (or sparse) solution.

\subsection{MatrixIRLS as saddle-escaping smoothing Newton method} \label{sec:Newton:interpretation}
From a theoretical point of view, the local quadratic convergence rate is an inherently local property that does not explain the numerically observed global convergence behavior (see \Cref{sec:numerics}), which is remarkable due to the non-convexity of the objective function.

A possible avenue to explain this is to interpret \texttt{MatrixIRLS} as a \emph{saddle-escaping smoothing Newton method}. Smoothing Newton methods minimize a non-smooth and possibly non-convex function $F$ by using derivatives of certain smoothings of $F$ \cite{ChenQiSun98,Chen2012smoothing}. Interpreting the optimization problem
$\min_{\f{X}:P_{\Omega}(X)=\f{y}} F_{\epsilon_k}(\f{X})$
as an unconstrained optimization problem over the null space of $P_{\Omega}$, we can write
\begin{equation*}
    \begin{split}
&\f{X}^{(k+1)} = \f{X}^{(k)} - P_{\Omega^c}^* \left(P_{\Omega^c} W\hk P_{\Omega^c}^*\right)^{-1} P_{\Omega^c} W\hk (\f{X}\hk) \\
	&\!\!= \f{X}^{(k)}\!\! -\!P_{\Omega^c}^* \left(P_{\Omega^c}\!  \overline{\nabla^2 F_{\epsilon_k}(\f{X}\hk)}\! P_{\Omega^c}^*\right)^{-1}\!\!\!\!\!\! P_{\Omega^c} \nabla F_{\epsilon}(\f{X}\hk),
\end{split}
\end{equation*}
if $\Omega^c = [d_1] \times [d_2] \setminus \Omega$ corresponds to the unobserved indices, where $\overline{\nabla^2 F_{\epsilon_k}(\f{X}\hk)}: \Rdd \to \Rdd$ is a \emph{modified} Hessian of $F_{\epsilon_k}$ at $\f{X}\hk$ that replaces negative eigenvalues of the Hessian $\nabla^2 F_{\epsilon_k}(\f{X}\hk)$ by positive ones and slightly increases small eigenvalues. We refer to the supplementary material for more details. In \cite{PaternainMokhtariRibeiro19}, it has been proved that for a fixed smooth function $F_{\epsilon_k}$, similar modified Newton-type steps are able to escape the first-order saddle points at a rate that is independent of the problem's condition number.

\subsection{MatrixIRLS as variable metric forward-backward method}
Another instructive angle to understand our method comes from the framework of \emph{variable metric forward-backward methods} \cite{Bonnans95,Chouzenoux14,Frankel15}. 

A forward-backward method can be seen as a combination of a gradient descent method and a proximal point algorithm \cite{Combettes2011} that can be used to minimize the sum of a non-smooth function and a function with Lipschitz continuous gradients. In particular, if $F$ is a proper, lower semi-continuous function, $G$ is differentiable with Lipschitz gradient $\nabla G$ and $(\alpha_k)_{k}$ a sequence of step sizes, the iterations of the forward-backward algorithm \cite{Attouch13} are such that
$\f{X}\hkk \in \prox_{\alpha_k F} \left(\f{X}\hk - \alpha_k \nabla  G(\f{X}\hk) \right)$, where $\prox_{\alpha_k F}(\cdot)$ is the proximity operator of $\alpha_k F$. Typically, in such an algorithm, $F$ would be chosen as the structure-promoting objective (such as the smoothed log-det objective $F_{\epsilon}$ above) and $G$ as a data-fit term such as $G(\f{X}) = \|P_{\Omega}(\f{X})-\f{y}\|_2^2/\lambda$, leading to thresholding-type algorithms.
\Cref{algo:MatrixIRLS}, however, fits into this framework if we choose, for $\epsilon_k >0$, the non-smooth part $F$ as the indicator function $F:=\chi_{P_{\Omega}^{-1}(\f{y})}:\Rdd \to \R$ of the constraint set $P_{\Omega}^{-1}(\f{y}):=\{\f{X} \in \Rdd : P_{\Omega}(\f{X})=\f{y}\}$ and the smooth part $G$ such that $G:=F_{\epsilon_k}:\Rdd \to \R$ as in \eqref{eq:smoothing:Fpeps}, while offsetting the distortion induced by the non-Euclidean nature of the level sets of $F_{\epsilon_k}$ via an appropriate choice of a \emph{variable metric} $d_{A_k}(\f{X},\f{Z})=\sqrt{\langle \f{X}-\f{Z},A_k(\f{X}-\f{Z})\rangle_F}$ for a positive definite linear operator $A_k: \Rdd \to \Rdd$, such that
\[
\f{X}\hkk \in \prox_{\alpha_k F}^{A_k} \left(\f{X}\hk - \alpha_k  A_k^{-1}(\nabla  G(\f{X}\hk)) \right),
\]
where $\prox_{F}^{A_k}(\f{X}):= \argmin_{\f{Z}\in\Rdd} F(\f{Z}) + \frac{1}{2} d_{A_k}(\f{X},\f{Z})^2 $ is the proximity operator of $F$ \emph{scaled in the metric $d_{A_k}$} at $\f{X}$ \cite{Chouzenoux14}. Specifically, if we choose the metric induced by the weight operator of \eqref{eq:def:W} such that $A_k := W\hk$ and unit step sizes $\alpha_k = 1$, we obtain
\[
\begin{split}
&\prox_{\alpha_k F}^{A_k} \left(\f{X}\hk - \alpha_k  A_k^{-1}(\nabla  G(\f{X}\hk)) \right) \\
&= \prox_{\chi_{P_{\Omega}^{-1}}}^{W\hk} \left(\f{X}\hk - W_k^{-1}(\nabla  F_{\epsilon_k}(\f{X}\hk)) \right) \\
&= \prox_{\chi_{P_{\Omega}^{-1}}}^{W\hk} \left(\f{X}\hk - W_k^{-1}W_k(\f{X}\hk)) \right) = \prox_{\chi_{P_{\Omega}^{-1}}}^{W\hk} \left(\f{0} \right)  \\
&= \argmin_{\f{X}: P_{\Omega}(\f{X})=\f{y}} \frac{1}{2} d_{A_k}(\f{X},\f{0})^2 = \argmin_{\f{X}: P_{\Omega}(\f{X})=\f{y}}  \langle \f{X},W\hk(\f{X})\rangle,
\end{split}
\]
where we used that $W_k(\f{X}\hk) = \nabla F_{\epsilon_k}(\f{X}\hk)$ in the third line. This shows that this update rule for $\f{X}\hkk$ coincides with \eqref{eq:MatrixIRLS:Xdef}.

Thus, \texttt{MatrixIRLS} can be considered as a forward-backward method with a variable metric induced by the weight operator $W\hk$, using a unit step size $\alpha_k=1$ for each $k$. One advantage of our method is therefore also that unlike many methods in this family, there is no step size to be tuned. A crucial difference, which makes existing theory (as, e.g., \cite{Frankel15}) for splitting methods not directly applicable for a convergence analysis of \texttt{MatrixIRLS}, is that the smooth function $G=F_{\epsilon_k}$ \emph{is changing} at each iteration due to the smoothing parameter update \eqref{eq:MatrixIRLS:epsdef}. On the other hand, the results of \cite{Frankel15} already imply the finite sequence length of $(\f{X}\hk)_{k}$ in the case that the smoothing parameter $\epsilon_k$ stagnates for $k \geq k_0$, using a Kurdyka-\L ojasiewicz property \cite{BolteKL07} of $F_{\epsilon_k} + \chi_{P_{\Omega}^{-1}(\f{y})}$. We leave a detailed discussion of this for future work. 

Finally, we note that previous IRLS methods \cite{Fornasier11,Mohan10,KS18} would also fit in the presented splitting framework, however, without fully capturing the underlying geometry as their weight operator has no strong connection to the Hessian $\nabla^2 F_{\epsilon_k}(\f{X}\hk)$ of $F_{\epsilon_k}$, as explained in the supplementary material.

\section{Numerical Experiments} \label{sec:numerics}
We explore the performance of \texttt{MatrixIRLS} for the completion of synthetic low-rank matrices in terms of statistical and computational efficiency in comparison to state-of-the-art algorithms in the literature. 
We base our choice on the desire to obtain a representative picture of state-of-the-art algorithms for matrix completion, including in particular those that are scalable to problems with dimensionality in the thousands or more, those that come with the best theoretical guarantees, and those that claim to perform particularly well to complete \emph{ill-conditioned} matrices. All the methods are provided with the true rank $r$ of $\f{X}^0$ as an input parameter. If possible, we use the MATLAB implementation provided by the authors of the respective papers.

The algorithms being tested against \texttt{MatrixIRLS} can be grouped into three main categories: the non-convex matrix factorization ones which includes \texttt{LMaFit} \cite{Wen12},  \texttt{ScaledASD} \cite{TannerWei16} and \texttt{ScaledGD} \cite{tong_ma_chi}, the Riemannian optimization on the manifold of fixed rank matrices ones which includes \texttt{LRGeomCG} \cite{Vandereycken13}, \texttt{RTRMC} \cite{boumal_absil_15} and \texttt{R3MC} \cite{MishraS14}, one alternating projection method on the manifold of fixed rank matrices, \texttt{NIHT} \cite{TannerWei13} (see \cite{wei_cai_chan_leung} for a connection between NIHT and Riemannian methods), and the recent \texttt{R2RILS} \cite{BauchNadler20} which can be seen as a factorization based method but also contains ideas from the Riemannian optimization family of algorithms. In the supplementary material we provide a description of each algorithm as well as the parameters used in the numerical section. 

As for the numerical experiments, it is important to note that we are interested to find low-rank completions from a sampling set $\Omega$ of sample size $|\Omega|=: m = \lfloor \rho r (d_1 +d_2 - r) \rfloor$, where $\rho$ is an oversampling ratio since $r (d_1 +d_2 - r)$ is just the number of degrees of freedom of an $(d_1 \times d_2)$-dimensional rank-$r$ matrix. For a given $\Omega$, the solution of \eqref{rank_equation} might \emph{not} coincide with $\f{X}^0$, or the solution might not be unique, even if the sample set $\Omega$ is chosen uniformly at random. In particular, this will be the case if $\Omega$ is such that there is a row or a column with \emph{fewer than} $r$ revealed entries, which a necessary condition for uniqueness of the \eqref{rank_equation} \cite{Pimentel15}. To mitigate this problem that is rather related to the structure of the sampling set than to the performance of a certain algorithm, we, in fact, adapt the sampling model of uniform sampling without replacement. For a given factor $\rho \geq 1$, we sample a set $\Omega \subset [d_1] \times [d_2]$ of size $m = \lfloor \rho r (d_1 +d_2 - r) \rfloor$ indices randomly without replacement. Then we check whether the condition such that each row and each column in $\Omega$ has at least $r$ observed entries, and resample $\Omega$ if this condition is not fulfilled. This procedure is repeated up to a maximum of $1000$ resamplings.

We consider the following setup: we sample a pair of random matrices $\f{U} \in \R^{d_1 \times r}$ and $\f{V} \in \R^{d_2 \times r}$ with $r$ orthonormal columns, and define the diagonal matrix $\Sigma \in \R^{r \times r}$ such that $\Sigma_{ii} = \kappa \exp(-\log(\kappa)\frac{i-1}{r-1})$ for $i \in [r]$. With this definition, we define a ground truth matrix $\f{X}^0 =\f{U}\Sigma \f{V}^*$ of rank $r$ that has exponentially decaying singular values between $\kappa$ and $1$. 

\subsection{Data-efficient recovery of ill-conditioned matrices}\label{numerics:data_efficiency}
\normalsize
First, we run \texttt{MatrixIRLS} and the algorithms \texttt{R2RILS} , \texttt{RTRMC}, \texttt{LRGeomCG}, \texttt{LMaFit}, \texttt{ScaledASD}, \texttt{ScaledGD}, \texttt{NIHT} and \texttt{R3MC} to complete $\f{X}^0$ from $P_{\Omega}(\f{X}^0)$ where $\Omega$ corresponds to different oversampling factors $\rho$ between $1$ and $4$, and where the condition number of $\f{X}^0$ is $\kappa= \sigma_1(\f{X}^0)/\sigma_r(\f{X}^0) = 10$. In \Cref{fig:sampcomp:1}, we report the median Frobenius errors $\|\f{X}^{(K)}-\f{X}^0\|_F/\|\f{X}^0\|_F$ of the respective algorithmic outputs $\f{X}^{(K)}$ across $100 $ independent realizations. 

\begin{figure}[h]
    \setlength\figureheight{45mm} 
    \setlength\figurewidth{120mm}
% This file was created by matlab2tikz.
%
%The latest updates can be retrieved from
%  http://www.mathworks.com/matlabcentral/fileexchange/22022-matlab2tikz-matlab2tikz
%where you can also make suggestions and rate matlab2tikz.
%
\definecolor{mycolor1}{rgb}{0.00000,0.44700,0.74100}%
\definecolor{mycolor2}{rgb}{0.85000,0.32500,0.09800}%
\definecolor{mycolor3}{rgb}{0.92900,0.69400,0.12500}%
\definecolor{mycolor4}{rgb}{0.49400,0.18400,0.55600}%
\definecolor{mycolor5}{rgb}{0.46600,0.67400,0.18800}%
\definecolor{mycolor6}{rgb}{0.30100,0.74500,0.93300}%
\definecolor{mycolor7}{rgb}{0.63500,0.07800,0.18400}%
\definecolor{mycolor8}{rgb}{0.08000,0.39200,0.25100}%
\begin{tikzpicture}

\begin{axis}[%
width=0.978\figurewidth,
height=\figureheight,
at={(0\figurewidth,0\figureheight)},
scale only axis,
unbounded coords=jump,
xmin=1,
xmax=4,
xlabel style={font=\color{white!15!black}},
xlabel={Oversampling factor $\rho$},
ymode=log,
ymin=1e-15,
ymax=100000,
yminorticks=true,
ylabel style={font=\color{white!15!black}},
ylabel={Median of rel. Frob. errors of $\mathbf{X}^{(K)}$},
axis background/.style={fill=white},
legend style={legend cell align=left, align=left, draw=white!15!black},
xlabel style={font=\tiny},ylabel style={font=\tiny},legend style={font=\fontsize{7}{30}\selectfont, anchor=south, legend columns = 5, at={(0.5,1.03)}}
]
\addplot [color=mycolor1, line width=1.0pt, mark=x, mark options={solid, mycolor1}]
  table[row sep=crcr]{%
1.05	0.854255555628291\\
1.1	0.749423097099185\\
1.2	0.494716881980835\\
1.3	0.307031095563721\\
1.4	0.0710950413951957\\
1.5	5.22910462862914e-13\\
1.6	1.68588918367454e-13\\
1.7	7.0339665885088e-14\\
1.8	5.22646777788465e-14\\
1.9	3.12007321555135e-14\\
2	2.26159575881266e-14\\
2.1	1.84275579519234e-14\\
2.2	1.20010224139451e-14\\
2.3	9.65828095082859e-15\\
2.4	8.33489396563835e-15\\
2.5	5.87405834953726e-15\\
2.6	7.06122879837332e-15\\
2.7	6.140965219143e-15\\
2.8	5.15640770569221e-15\\
2.9	5.24064290362918e-15\\
3	4.58553528727644e-15\\
3.1	4.81251473302107e-15\\
3.2	4.03701051254568e-15\\
3.3	5.28365319386791e-15\\
3.4	4.07526625097761e-15\\
3.5	3.54126224418117e-15\\
3.6	2.79718832356394e-15\\
3.7	2.81718954270358e-15\\
3.8	2.69583211552255e-15\\
3.9	2.82440163639793e-15\\
4	2.46058369466678e-15\\
};
\addlegendentry{MatrixIRLS}

\addplot [color=mycolor2, line width=1.0pt, mark=+, mark options={solid, mycolor2}]
  table[row sep=crcr]{%
1.05	2.26456245685704\\
1.1	2.5024356292252\\
1.2	2.43235945106256\\
1.3	2.03349200850792\\
1.4	1.21611319874659\\
1.5	1.23751335798688e-06\\
1.6	5.20820599960089e-09\\
1.7	2.4080827102596e-11\\
1.8	7.35611767992492e-14\\
1.9	1.14972497705595e-14\\
2	1.14753038353458e-14\\
2.1	1.00726730331893e-14\\
2.2	7.34438613648422e-15\\
2.3	7.96384643655968e-15\\
2.4	7.79328724569884e-15\\
2.5	1.00231206491362e-14\\
2.6	1.3546830583003e-14\\
2.7	1.31005965452693e-14\\
2.8	1.35392160987976e-14\\
2.9	9.22728400705331e-15\\
3	1.77220221973004e-14\\
3.1	1.07494351508938e-14\\
3.2	1.04816978255403e-14\\
3.3	1.55757331921461e-14\\
3.4	1.83155751788827e-14\\
3.5	1.41801812290188e-14\\
3.6	3.49996967411273e-14\\
3.7	2.38941709817596e-14\\
3.8	1.93508288410887e-14\\
3.9	2.6480581760627e-14\\
4	2.35937791312582e-14\\
};
\addlegendentry{R2RILS}

\addplot [color=mycolor3, line width=1.0pt, mark=asterisk, mark options={solid, mycolor3}]
  table[row sep=crcr]{%
1.05	1075.26057752175\\
1.1	1036.88076801983\\
1.2	898.187957850437\\
1.3	676.778169415698\\
1.4	684.417922044455\\
1.5	711.785063703726\\
1.6	725.442000383892\\
1.7	751.035923142478\\
1.8	651.905082795613\\
1.9	629.404587607984\\
2	441.777176221888\\
2.1	429.821955824141\\
2.2	343.56802178822\\
2.3	1.15941912323249e-06\\
2.4	1.04551680418991e-07\\
2.5	1.27275200027277e-07\\
2.6	1.14870558919892e-07\\
2.7	1.05903565530647e-07\\
2.8	8.54610944390847e-09\\
2.9	4.96007271424229e-08\\
3	2.61470122950561e-08\\
3.1	8.86485982905778e-09\\
3.2	1.49412147034074e-08\\
3.3	6.46692958302814e-09\\
3.4	2.4700422542626e-08\\
3.5	1.03709991798648e-08\\
3.6	2.06415898459728e-08\\
3.7	9.18382240490974e-09\\
3.8	3.58229325687377e-09\\
3.9	2.52518262515746e-08\\
4	5.83585108699413e-09\\
};
\addlegendentry{RTRMC}

\addplot [color=mycolor4, line width=1.0pt, mark=o, mark options={solid, mycolor4}]
  table[row sep=crcr]{%
1.05	4.54218788203825\\
1.1	5.12790257479795\\
1.2	5.33019386122026\\
1.3	4.98053994216232\\
1.4	4.39615077981243\\
1.5	4.22488135311128\\
1.6	3.4144198382766\\
1.7	3.12003690973836\\
1.8	3.00097875387249\\
1.9	2.983446174957\\
2	2.60611156994507\\
2.1	2.36681710851521\\
2.2	1.71475814466035\\
2.3	1.71775747587247\\
2.4	1.20879362205861\\
2.5	0.764388428225149\\
2.6	0.91644120517575\\
2.7	2.39041820122551e-07\\
2.8	1.62071295437065e-07\\
2.9	1.50039926066069e-07\\
3	1.27835472461951e-07\\
3.1	1.12400902325457e-07\\
3.2	1.03200233643693e-07\\
3.3	9.61158837664088e-08\\
3.4	8.59690579442177e-08\\
3.5	8.40231752692976e-08\\
3.6	7.59751041476831e-08\\
3.7	7.16773678345403e-08\\
3.8	6.4853253141164e-08\\
3.9	6.01399241139154e-08\\
4	5.84948579668478e-08\\
};
\addlegendentry{LRGeomCG}

\addplot [color=mycolor5, line width=1.0pt, mark=x, mark options={solid, mycolor5}]
  table[row sep=crcr]{%
1.05	1.12746309687931\\
1.1	1.1215115543341\\
1.2	1.05005289952185\\
1.3	0.979491896633853\\
1.4	0.827263202581372\\
1.5	0.738733099621636\\
1.6	0.607355931786455\\
1.7	0.497511329328573\\
1.8	0.405136304159793\\
1.9	0.334586407456963\\
2	0.283213525406684\\
2.1	0.242367443226484\\
2.2	0.196034213853877\\
2.3	0.151317020176626\\
2.4	0.117156714237261\\
2.5	0.115895091936267\\
2.6	0.101974980837135\\
2.7	6.65749401699423e-09\\
2.8	5.01786293007783e-09\\
2.9	4.59006342081414e-09\\
3	4.28262136377922e-09\\
3.1	3.97593246298878e-09\\
3.2	3.84862100638697e-09\\
3.3	3.56396882136211e-09\\
3.4	3.44257715382382e-09\\
3.5	3.28634578928152e-09\\
3.6	3.22147596465297e-09\\
3.7	3.1379469525384e-09\\
3.8	2.97955273363032e-09\\
3.9	2.94591484325744e-09\\
4	2.8687371350171e-09\\
};
\addlegendentry{LMaFit}

\addplot [color=mycolor6, line width=1.0pt, mark=square, mark options={solid, mycolor6}]
  table[row sep=crcr]{%
1.05	1.06466416015086\\
1.1	1.06463314388243\\
1.2	1.03965187979647\\
1.3	0.960668507770738\\
1.4	0.824747589865423\\
1.5	0.745062952365437\\
1.6	0.604212717779343\\
1.7	0.533104871209988\\
1.8	0.434557035092144\\
1.9	0.388395943543961\\
2	0.305635793463275\\
2.1	0.284434146621849\\
2.2	0.235606018890938\\
2.3	0.208483454008394\\
2.4	0.182529631516212\\
2.5	0.165489912583052\\
2.6	7.05991430201768e-09\\
2.7	5.16997427925381e-09\\
2.8	4.59969613371997e-09\\
2.9	4.21095337333196e-09\\
3	4.01020980931519e-09\\
3.1	3.65962492755945e-09\\
3.2	3.51540016368054e-09\\
3.3	3.30205204705014e-09\\
3.4	3.24585144342233e-09\\
3.5	3.12736215079214e-09\\
3.6	3.06860576382731e-09\\
3.7	2.98223020227907e-09\\
3.8	2.81961243454812e-09\\
3.9	2.81855322114525e-09\\
4	2.70286224887889e-09\\
};
\addlegendentry{ScaledASD}

\addplot [color=mycolor7, line width=1.0pt, mark=diamond, mark options={solid, mycolor7}]
  table[row sep=crcr]{%
1.05	0.780342983376787\\
1.1	0.760559629534551\\
1.2	0.716937289667204\\
1.3	0.655527483255922\\
1.4	0.589124333833299\\
1.5	0.542132854431097\\
1.6	0.45418458358205\\
1.7	0.385560512899637\\
1.8	0.341913628226789\\
1.9	0.278746040604984\\
2	0.242730595784693\\
2.1	0.192126001507383\\
2.2	0.145452661065458\\
2.3	0.138749643187959\\
2.4	0.127851513423253\\
2.5	0.0834717206401568\\
2.6	0.0896687668276914\\
2.7	0.0318824695108248\\
2.8	0.0534563334227891\\
2.9	8.57397358047281e-08\\
3	1.1479597956224e-07\\
3.1	4.3746445456053e-09\\
3.2	4.05807191058956e-09\\
3.3	3.86537463377338e-09\\
3.4	3.63303397873685e-09\\
3.5	3.57399045753236e-09\\
3.6	3.43987811253563e-09\\
3.7	3.34572184308354e-09\\
3.8	3.1544837589476e-09\\
3.9	3.12171693602334e-09\\
4	2.97912161408277e-09\\
};
\addlegendentry{ScaledGD}

\addplot [color=mycolor8, line width=1.0pt, mark=triangle, mark options={solid, mycolor8}]
  table[row sep=crcr]{%
1.05	0.741178657713085\\
1.1	0.710345115228534\\
1.2	0.657484763497934\\
1.3	0.583841951688662\\
1.4	0.52969749416985\\
1.5	0.476354734852938\\
1.6	0.404884442048636\\
1.7	0.360820981001756\\
1.8	0.306935251980735\\
1.9	0.303708974319308\\
2	0.267456591481581\\
2.1	0.234331899499627\\
2.2	0.21993357166271\\
2.3	0.206373745234411\\
2.4	0.224257420213961\\
2.5	0.188961454098439\\
2.6	0.21827413665062\\
2.7	0.238458775410865\\
2.8	0.180849453808525\\
2.9	0.211087228946457\\
3	0.245008088121678\\
3.1	0.236380488132301\\
3.2	0.243076273505301\\
3.3	0.189558676300446\\
3.4	0.234927232199093\\
3.5	0.291938223692077\\
3.6	0.274422676296374\\
3.7	0.233688337092644\\
3.8	0.250113008520314\\
3.9	0.235759425673383\\
4	0.312027766249649\\
};
\addlegendentry{NIHT}

\addplot [color=black, line width=1.0pt, mark=triangle, mark options={solid, rotate=180, black}]
  table[row sep=crcr]{%
1.05	4.96665697678023\\
1.1	5.43839837093083\\
1.2	5.74675569745413\\
1.3	5.42493715826557\\
1.4	4.3646339396004\\
1.5	2.54550778470283\\
1.6	0.409610299286151\\
1.7	0.188531284845151\\
1.8	0.154461662572826\\
1.9	0.093072044579349\\
2	0.00171842201573833\\
2.1	1.22116099546181e-05\\
2.2	6.48779352867206e-08\\
2.3	2.72354271769887e-09\\
2.4	2.52573283283794e-09\\
2.5	2.37147759830555e-09\\
2.6	2.24853483384248e-09\\
2.7	2.1459391718041e-09\\
2.8	2.07617540982446e-09\\
2.9	2.01045576186298e-09\\
3	1.93093913884828e-09\\
3.1	1.88115521161772e-09\\
3.2	1.85045558627907e-09\\
3.3	1.78166679408017e-09\\
3.4	1.7295284998751e-09\\
3.5	1.69754239354936e-09\\
3.6	1.65662741520908e-09\\
3.7	1.63299624376917e-09\\
3.8	1.59336292953113e-09\\
3.9	1.54973257293995e-09\\
4	1.53229705822349e-09\\
};
\addlegendentry{R3MC}

\end{axis}
\end{tikzpicture}%
\caption{Performance of matrix completion algorithms for $1000 \times 1000$ matrices of rank $r=5$ with condition number $\kappa=10$, given $m= \lfloor \rho r (d_1 + d_2-r)\rfloor$ random samples. Median of Frobenius errors $\|\f{X}^{(K)}-\f{X}^0\|_F/\|\f{X}^0\|_F$ of $100$ independent realizations.}
\label{fig:sampcomp:1}
\end{figure}
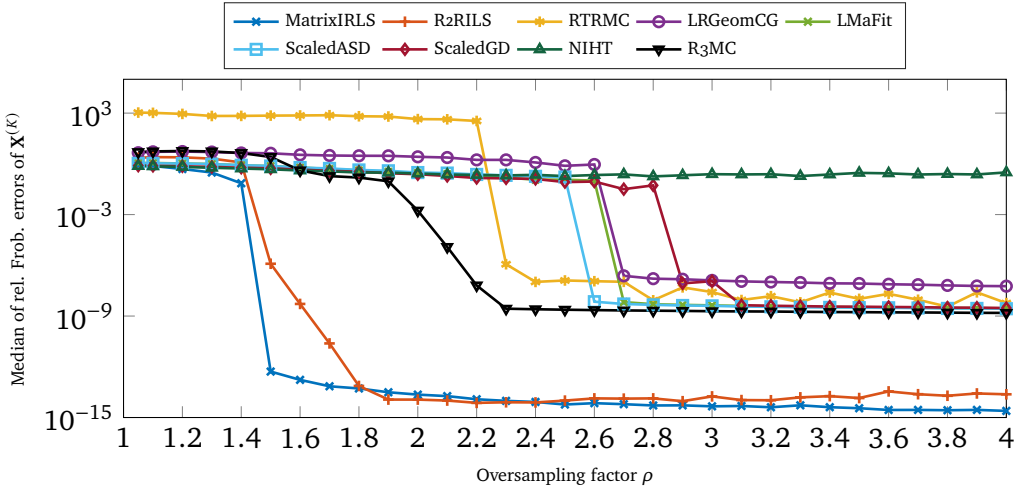
We see that \texttt{MatrixIRLS} and \texttt{R2RILS} are the only algorithms that are able to complete $\f{X}^0$ already for $\rho=1.5$. In our experiment, \texttt{R3MC} completes $\f{X}^0$ in a majority of instances starting from $\rho=2.0$, whereas the other algorithms, except from \texttt{NIHT}, are able to reconstruct the matrix most of the times if $\rho$ is at least between $2.4$ and $3.0$. This confirms the findings of \cite{BauchNadler20} which show that even for quite well-conditioned matrices, fewer samples are required if second-order methods such as \texttt{R2RILS} or \texttt{MatrixIRLS} are used.

We repeat this experiment for ill-conditioned matrices $\f{X}^0$ with $\kappa = 10^5$. In  \Cref{fig:sampcomp:2}, we see that current state-of-the-art methods are \emph{not able} to achieve exact recovery of $\f{X}^0$. This is in particular true as given the exponential decay of the singular values, in order to recover the subspace corresponding to the smallest singular value of $\f{X}^0$, a relative Frobenius error of $10^{-5}$ or even several orders of magnitude smaller needs to be achieved. We observe that \texttt{MatrixIRLS} is the only method that is able to complete $\f{X}^0$ for any of the considered oversampling factors.

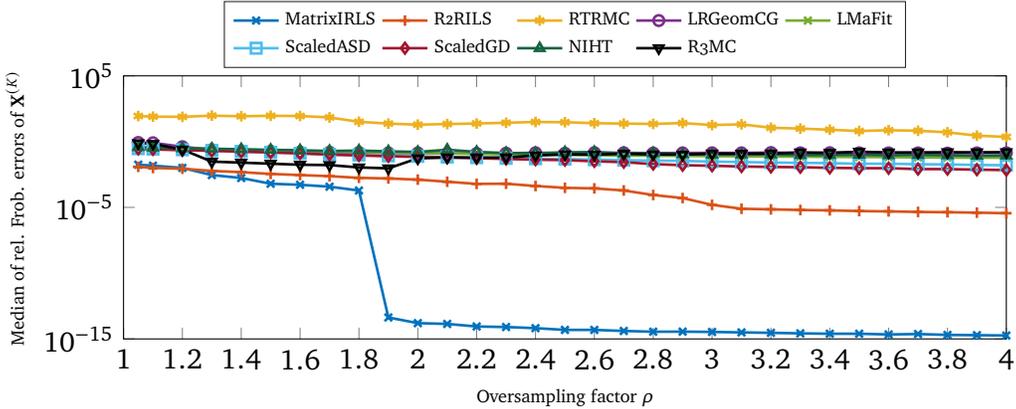
\begin{figure}[ht]
    \setlength\figureheight{35mm} 
    \setlength\figurewidth{120mm}
% This file was created by matlab2tikz.
%
%The latest updates can be retrieved from
%  http://www.mathworks.com/matlabcentral/fileexchange/22022-matlab2tikz-matlab2tikz
%where you can also make suggestions and rate matlab2tikz.
%
\definecolor{mycolor1}{rgb}{0.00000,0.44700,0.74100}%
\definecolor{mycolor2}{rgb}{0.85000,0.32500,0.09800}%
\definecolor{mycolor3}{rgb}{0.92900,0.69400,0.12500}%
\definecolor{mycolor4}{rgb}{0.49400,0.18400,0.55600}%
\definecolor{mycolor5}{rgb}{0.46600,0.67400,0.18800}%
\definecolor{mycolor6}{rgb}{0.30100,0.74500,0.93300}%
\definecolor{mycolor7}{rgb}{0.63500,0.07800,0.18400}%
\definecolor{mycolor8}{rgb}{0.08000,0.39200,0.25100}%
\begin{tikzpicture}

\begin{axis}[%
width=0.978\figurewidth,
height=\figureheight,
at={(0\figurewidth,0\figureheight)},
scale only axis,
xmin=1,
xmax=4,
xlabel style={font=\color{white!15!black}},
xlabel={Oversampling factor $\rho$},
ymode=log,
ymin=1e-15,
ymax=100000,
yminorticks=true,
ylabel style={font=\color{white!15!black}},
ylabel={Median of rel. Frob. errors of $\mathbf{X}^{(K)}$},
axis background/.style={fill=white},
legend style={legend cell align=left, align=left, draw=white!15!black},
xlabel style={font=\tiny},ylabel style={font=\tiny},legend style={font=\fontsize{7}{30}\selectfont, anchor=south, legend columns = 5, at={(0.5,1.03)}}
]
\addplot [color=mycolor1, line width=1.0pt, mark=x, mark options={solid, mycolor1}]
  table[row sep=crcr]{%
1.05	0.017696793293338\\
1.1	0.0149247467929977\\
1.2	0.0100358741421393\\
1.3	0.00287401737080599\\
1.4	0.0017217492213112\\
1.5	0.000627509308504794\\
1.6	0.000523487605715263\\
1.7	0.000374934439857983\\
1.8	0.000192177543449912\\
1.9	4.26148893627606e-14\\
2	1.58623861308894e-14\\
2.1	1.37788927229948e-14\\
2.2	8.82670925448159e-15\\
2.3	8.02507277531667e-15\\
2.4	6.59203757233433e-15\\
2.5	4.85856656140693e-15\\
2.6	4.84646205110599e-15\\
2.7	4.03688126061311e-15\\
2.8	3.52166740720263e-15\\
2.9	3.57778726155854e-15\\
3	3.39853128746517e-15\\
3.1	3.13012670512855e-15\\
3.2	2.90704826967485e-15\\
3.3	2.64447657593206e-15\\
3.4	2.47776651290129e-15\\
3.5	2.47702997773112e-15\\
3.6	2.18614791909683e-15\\
3.7	2.35729374885045e-15\\
3.8	2.0121153940015e-15\\
3.9	1.92108224894144e-15\\
4	1.82116757856728e-15\\
};
\addlegendentry{MatrixIRLS}

\addplot [color=mycolor2, line width=1.0pt, mark=+, mark options={solid, mycolor2}]
  table[row sep=crcr]{%
1.05	0.0121729439043043\\
1.1	0.00997144484447369\\
1.2	0.00872449609984643\\
1.3	0.00594736215558571\\
1.4	0.00476375327613421\\
1.5	0.00350287893314341\\
1.6	0.00286693482596602\\
1.7	0.00235681936745234\\
1.8	0.00176184450881462\\
1.9	0.00157710691467946\\
2	0.0012894886519888\\
2.1	0.000890416387229741\\
2.2	0.000612510871767183\\
2.3	0.000628320352780005\\
2.4	0.000422461566296125\\
2.5	0.000307890100903442\\
2.6	0.000278390558878799\\
2.7	0.000193359962186809\\
2.8	8.73876008786447e-05\\
2.9	5.12954109819375e-05\\
3	1.55842407560365e-05\\
3.1	7.8624565679366e-06\\
3.2	7.09562671875953e-06\\
3.3	6.33123971806548e-06\\
3.4	5.92027738932675e-06\\
3.5	5.24569952214572e-06\\
3.6	4.96587249559889e-06\\
3.7	4.51427820997321e-06\\
3.8	4.28391740187636e-06\\
3.9	3.97622888670062e-06\\
4	3.61094465249887e-06\\
};
\addlegendentry{R2RILS}

\addplot [color=mycolor3, line width=1.0pt, mark=asterisk, mark options={solid, mycolor3}]
  table[row sep=crcr]{%
1.05	88.1773642405158\\
1.1	79.3512035402019\\
1.2	76.7745266497778\\
1.3	94.3732403850792\\
1.4	84.7556749644728\\
1.5	92.0131576237411\\
1.6	88.5243100930591\\
1.7	68.9512284446234\\
1.8	32.1455731454929\\
1.9	23.4058432834348\\
2	19.4557226288538\\
2.1	21.6674555929097\\
2.2	23.8182185283779\\
2.3	27.0991116333954\\
2.4	31.168987747205\\
2.5	29.8500776136293\\
2.6	25.0704730678691\\
2.7	23.4754271963616\\
2.8	22.1912782891398\\
2.9	25.7075375758748\\
3	18.1710204054141\\
3.1	20.7392540885847\\
3.2	11.3573930075962\\
3.3	9.77324309823878\\
3.4	8.03811839649874\\
3.5	6.38279842759175\\
3.6	7.36222030845832\\
3.7	6.69451532674315\\
3.8	5.09370937566492\\
3.9	2.83881370325447\\
4	2.28326868894603\\
};
\addlegendentry{RTRMC}

\addplot [color=mycolor4, line width=1.0pt, mark=o, mark options={solid, mycolor4}]
  table[row sep=crcr]{%
1.05	0.854836884420406\\
1.1	0.814048700836708\\
1.2	0.406959731008971\\
1.3	0.244009238431741\\
1.4	0.199270482150474\\
1.5	0.182880066156403\\
1.6	0.158373902708547\\
1.7	0.15458310852111\\
1.8	0.147855130834231\\
1.9	0.13619099137814\\
2	0.136395239316276\\
2.1	0.129441656898353\\
2.2	0.125141551454935\\
2.3	0.13113615625425\\
2.4	0.141995452528891\\
2.5	0.12983210569411\\
2.6	0.143843931977864\\
2.7	0.141583100719317\\
2.8	0.133851752615748\\
2.9	0.138590807836888\\
3	0.139726800063849\\
3.1	0.13928720817718\\
3.2	0.146900614777377\\
3.3	0.139156782879128\\
3.4	0.142598685370717\\
3.5	0.140343853668292\\
3.6	0.138222212114855\\
3.7	0.13808149128863\\
3.8	0.132007622667505\\
3.9	0.138055046348376\\
4	0.139845416443067\\
};
\addlegendentry{LRGeomCG}

\addplot [color=mycolor5, line width=1.0pt, mark=x, mark options={solid, mycolor5}]
  table[row sep=crcr]{%
1.05	0.370479094334638\\
1.1	0.344738688789208\\
1.2	0.305003685693449\\
1.3	0.274819969474776\\
1.4	0.247740013300637\\
1.5	0.220229797221742\\
1.6	0.197593429507371\\
1.7	0.174974560110361\\
1.8	0.161880928811185\\
1.9	0.149072956986771\\
2	0.135573313964275\\
2.1	0.125617559765276\\
2.2	0.117624754306401\\
2.3	0.107770796023168\\
2.4	0.102869148200105\\
2.5	0.0973699783171384\\
2.6	0.0911976587510287\\
2.7	0.087070467542029\\
2.8	0.0851695140911353\\
2.9	0.0820343098805306\\
3	0.0784133307659842\\
3.1	0.0768336670990977\\
3.2	0.073298345272641\\
3.3	0.071935717650967\\
3.4	0.0688059674895942\\
3.5	0.0675089237185283\\
3.6	0.0660170864969936\\
3.7	0.0630233750193977\\
3.8	0.0608667400397613\\
3.9	0.0599704565136605\\
4	0.0594213595172383\\
};
\addlegendentry{LMaFit}

\addplot [color=mycolor6, line width=1.0pt, mark=square, mark options={solid, mycolor6}]
  table[row sep=crcr]{%
1.05	0.293535194670914\\
1.1	0.273424548894986\\
1.2	0.241863789436971\\
1.3	0.210308160885623\\
1.4	0.179671150516937\\
1.5	0.153704204975937\\
1.6	0.127383037107324\\
1.7	0.104581966212891\\
1.8	0.0967381965419033\\
1.9	0.0829217878720201\\
2	0.0767587856465289\\
2.1	0.0677358836206883\\
2.2	0.0610941507309169\\
2.3	0.0536826770846946\\
2.4	0.0501433886034221\\
2.5	0.047032656931879\\
2.6	0.0424026045167179\\
2.7	0.0391535053499478\\
2.8	0.0355206008825965\\
2.9	0.0342638696279479\\
3	0.0301352181009652\\
3.1	0.0279617109248466\\
3.2	0.0257797090099662\\
3.3	0.0240692916355119\\
3.4	0.023210471331287\\
3.5	0.0206607719407823\\
3.6	0.0208466572877179\\
3.7	0.0190403180574802\\
3.8	0.0184171481393597\\
3.9	0.0167656690800692\\
4	0.0160719495980064\\
};
\addlegendentry{ScaledASD}

\addplot [color=mycolor7, line width=1.0pt, mark=diamond, mark options={solid, mycolor7}]
  table[row sep=crcr]{%
1.05	0.282005628998929\\
1.1	0.259057103039847\\
1.2	0.222535496634528\\
1.3	0.200741125186381\\
1.4	0.17154972703974\\
1.5	0.151177149976177\\
1.6	0.127593953591546\\
1.7	0.103040137529032\\
1.8	0.0888678232426331\\
1.9	0.0775871141116796\\
2	0.0689565811530352\\
2.1	0.0628166001450546\\
2.2	0.0552483658972788\\
2.3	0.0479359822321287\\
2.4	0.0423040634728772\\
2.5	0.0392512972817216\\
2.6	0.0318814987640092\\
2.7	0.0274905957643751\\
2.8	0.0195835655807152\\
2.9	0.0161063827691886\\
3	0.0145834490638091\\
3.1	0.0128517676575462\\
3.2	0.0118565862006724\\
3.3	0.0113175533060402\\
3.4	0.0101385628631579\\
3.5	0.00956005369402189\\
3.6	0.0095480484530076\\
3.7	0.00816374445443083\\
3.8	0.00815332450461049\\
3.9	0.00738917286243342\\
4	0.00685805936868511\\
};
\addlegendentry{ScaledGD}

\addplot [color=mycolor8, line width=1.0pt, mark=triangle, mark options={solid, mycolor8}]
  table[row sep=crcr]{%
1.05	0.381684333532095\\
1.1	0.350455776498883\\
1.2	0.31465709602788\\
1.3	0.289284561868943\\
1.4	0.261387299429177\\
1.5	0.236763350860108\\
1.6	0.219918434274913\\
1.7	0.193810606899661\\
1.8	0.209297075614819\\
1.9	0.181745589813183\\
2	0.163995242516317\\
2.1	0.24155823327221\\
2.2	0.164789720459069\\
2.3	0.136348972794721\\
2.4	0.131894690711273\\
2.5	0.154017219048052\\
2.6	0.159725873265842\\
2.7	0.127157617918866\\
2.8	0.132809280412949\\
2.9	0.110726691809945\\
3	0.103675178074836\\
3.1	0.105090353370288\\
3.2	0.105795441660664\\
3.3	0.0954892884532374\\
3.4	0.0971222604079868\\
3.5	0.100048891056211\\
3.6	0.0925294362854234\\
3.7	0.0961592579970626\\
3.8	0.0913540561972631\\
3.9	0.0848092286768069\\
4	0.0876188156232948\\
};
\addlegendentry{NIHT}

\addplot [color=black, line width=1.0pt, mark=triangle, mark options={solid, rotate=180, black}]
  table[row sep=crcr]{%
1.05	0.759397938691566\\
1.1	0.686619819776935\\
1.2	0.247550670944846\\
1.3	0.0302600651319708\\
1.4	0.0253275850883919\\
1.5	0.0210098160573033\\
1.6	0.0174786621929334\\
1.7	0.016526845397056\\
1.8	0.010903643548303\\
1.9	0.0093197216103729\\
2	0.0532077659509479\\
2.1	0.0620901253738798\\
2.2	0.0610195002873348\\
2.3	0.0625049374381483\\
2.4	0.088277519849199\\
2.5	0.106776504482481\\
2.6	0.103339485074257\\
2.7	0.119815555071001\\
2.8	0.106115009408714\\
2.9	0.104642644167653\\
3	0.123117755038511\\
3.1	0.121361867139353\\
3.2	0.125245424789379\\
3.3	0.149990040632705\\
3.4	0.140117824316866\\
3.5	0.16979281874768\\
3.6	0.149784142357538\\
3.7	0.151104361587104\\
3.8	0.156137119057806\\
3.9	0.153833423213751\\
4	0.15677368919979\\
};
\addlegendentry{R3MC}

\end{axis}
\end{tikzpicture}%
\caption{Performance of matrix completion algorithms as in \Cref{fig:sampcomp:1}, but with $\kappa = 10^5$. Median of $50$ realizations.}
\label{fig:sampcomp:2}
\end{figure}

\subsection{Running time for ill-conditioned problems} \label{sec:runtime:illconditioned}

 \begin{figure}[ht]
\includegraphics{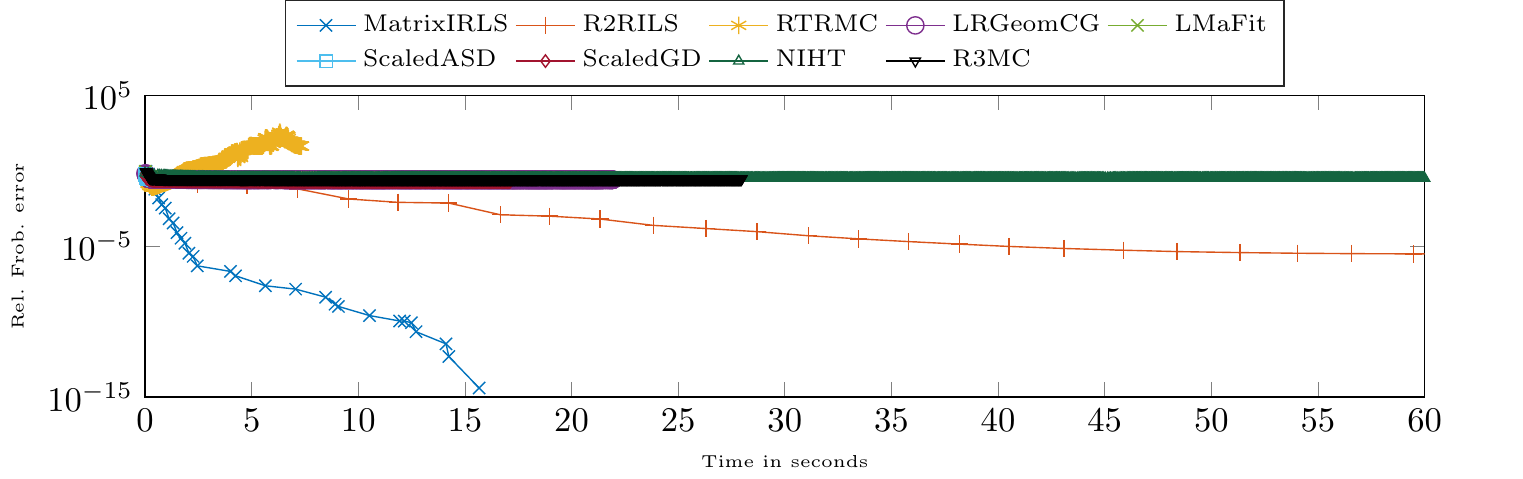}
 \caption{Completion task for a highly ill-conditioned $1000 \times 1000$ matrix of rank $r=10$ with $\kappa=10^{10}$ ($\rho= 4$).}
 \label{running_time_ill_cond}
 \end{figure}

In Figure \ref{running_time_ill_cond}, for an oversampling ratio of $\rho = 4$, we illustrate the completion of one single extremely ill-conditioned $1000 \times 1000$ matrix with $\rank=10 $ and $\kappa=10^{10}$ and exponentially interpolated singular values as described above. We again can see that only second-order methods such as \texttt{R2RILS} or \texttt{MatrixIRLS} are able to achieve a relative Frobenius error $\approx 10^{-5}$ or smaller.  \texttt{MatrixIRLS} goes beyond that and attains a relative Frobenius error of the order of the machine precision and, remarkably, exactly recover all the singular values up to 15 digits. This also shows that the conjugated gradient and the randomized block Krylov method used at the inner core of our implementation can be extremely precise when properly adjusted. \texttt{R2RILS} is also able to obtain relatively low Frobenius error but unlike our method, it is not able to retrieve all the singular values with high accuracy. Other methods were observed to lead to a meaningful error decrease for the ill-conditioned matrix of interest.

In Figure \ref{running_time_MatrixIRLS_vs_R2RILS}, we compare the execution time of \texttt{R2RILS} and \texttt{MatrixIRLS} for a range of ground truth matrices with increasing dimension, for an oversampling ratio of $\rho = 2.5$, whose singular values are linearly interpolated between $\kappa$ and $1$. We observe that the larger the dimensions are, the larger is the discrepancy in the running time of the two algorithms.  Other algorithms are not considered in this experiment because they typically do not reach a relative error below $10^{-4}$ for $\kappa\gg10^2$.

\begin{figure}[h!]
    \setlength\figureheight{35mm} 
    \setlength\figurewidth{120mm}
% This file was created by matlab2tikz.
%
%The latest updates can be retrieved from
%  http://www.mathworks.com/matlabcentral/fileexchange/22022-matlab2tikz-matlab2tikz
%where you can also make suggestions and rate matlab2tikz.
%
\definecolor{mycolor1}{rgb}{0.00000,0.44700,0.74100}%MatrixIRLS
\definecolor{mycolor2}{rgb}{0.85000,0.32500,0.09800}%R2RILS
\definecolor{mycolor3}{rgb}{0.92900,0.69400,0.12500}%RTRMC
\definecolor{mycolor4}{rgb}{0.49400,0.18400,0.55600}%LRGeomCG
\definecolor{mycolor5}{rgb}{0.46600,0.67400,0.18800}%LMaFit
\definecolor{mycolor6}{rgb}{0.30100,0.74500,0.93300}%ScaledASD
\definecolor{mycolor7}{rgb}{0.63500,0.07800,0.18400}%ScaledGD
% \definecolor{mycolor8}{rgb}{0.99000,0.80000,0.07000}%NIHT mustard
\definecolor{mycolor8}{rgb}{0.08000,0.39200,0.25100}%NIHT dark green
\begin{tikzpicture}

\begin{axis}[%
width=0.978\figurewidth,
height=\figureheight,
at={(0\figurewidth,0\figureheight)},
scale only axis,
xmin=100,
xmax=1000,
xlabel={m (with n=m+100)},
ymin=2.5,
ymax=7,
ylabel={Execution time (seconds)},
axis background/.style={fill=white},
xmajorgrids,
ymajorgrids,
xlabel style={font=\tiny},ylabel style={font=\tiny},legend style={font=\fontsize{7}{30}\selectfont, anchor=south, legend columns = 2, at={(0.5,1.01)}}
]

\addplot [color=mycolor1, line width=1pt,  mark=asterisk, mark options={solid, mycolor1}]
  table[row sep=crcr]{%
100	2.95304825418182\\
200	2.98230332166667\\
300	3.00724506338462\\
400	3.08807303557143\\
500	3.22394640293334\\
600	3.274575889875\\
700	3.33046376941177\\
800	3.42672688188889\\
900	3.54640258852632\\
1000	3.7073854512\\
};
\addlegendentry{MatrixIRLS r=5}

\addplot [color=mycolor2, line width=1pt,  mark=asterisk, mark options={solid, mycolor2}]
  table[row sep=crcr]{%
100	5.199734626\\
200	4.97052347\\
300	5.06955289830769\\
400	5.03849069399999\\
500	5.082422796\\
600	5.218918925\\
700	5.35722198082353\\
800	5.58250735188888\\
900	5.83916094263158\\
1000	6.4279443572\\
};
\addlegendentry{R2RILS r=5}

\addplot [color=mycolor1, line width=1.5pt]
  table[row sep=crcr]{%
100	3.55308134704762\\
200	3.44113479809091\\
300	3.37757395730435\\
400	3.35926170525\\
500	3.39726942392001\\
600	3.47673567853847\\
700	3.60797207814816\\
800	3.75406394928572\\
900	3.94470405496553\\
1000	4.20021874420001\\
};
\addlegendentry{MatrixIRLS r=10}

\addplot [color=mycolor2, line width=1.5pt]
  table[row sep=crcr]{%
100	6.20542460038095\\
200	6.087665208\\
300	6.03986127608695\\
400	6.08141432908333\\
500	6.20602667224\\
600	6.37619482830769\\
700	6.64026641185185\\
800	6.98286348607142\\
900	7.3768662233793\\
1000	7.76715193293332\\
};
\addlegendentry{R2RILS r=10}

\end{axis}
\end{tikzpicture}%
\caption{Execution time of \texttt{R2RILS} and \texttt{MatrixIRLS} for completion of rank $r \in \{5,10\}$ matrices of size $m \times (m+100)$ and condition number $\kappa=10^2$, averaged across 50 independent realizations.}
\label{running_time_MatrixIRLS_vs_R2RILS}
\end{figure}
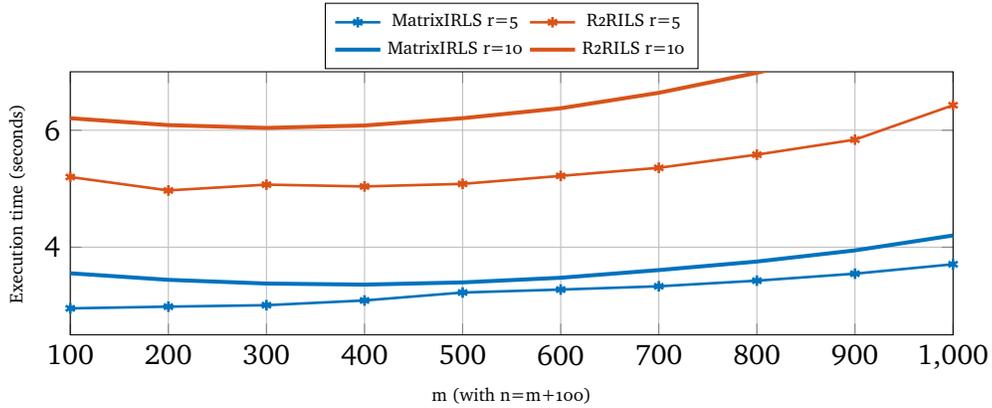

\subsection{MatrixIRLS vs. rank-adaptive strategies} \label{sec:rank_update}

In \Cref{numerics:data_efficiency}, all methods were provided with
the correct rank $r$ of the ground truth, which was used to determine
the size of the matrix factors or the rank of the fixed rank manifold.
Even in this case, we illustrated numerically that most of the methods
are not able to recover highly ill-conditioned matrices. To handle such
ill-conditioned completion problems, \cite{MishraS14,Uschmajew_Vandereycken,Tan2014} proposed rank-adaptive variants of the methods \texttt{R3MC} and \texttt{LRGeomCG}.
These variants, which we call \texttt{LRGeomCG
Pursuit}\footnote{The MATLAB code containing the rank update was
provided by B. Vandereycken in a private communication.} \cite{Uschmajew_Vandereycken,Tan2014} and \texttt{R3MC w/ Rank Update} \cite{MishraS14}, respectively, combine fixed-rank optimization with outer iterations that increase $\widetilde{r}$ from 1 to a target rank $r$, while warm starting each outer iteration with the output of the previous iteration. 
To compare the data efficiency of \texttt{MatrixIRLS} with the one of these three algorithms, we repeat the experiments of \Cref{numerics:data_efficiency} for these methods and report the median Frobenius errors for the completion of $1000 \times 1000$ matrices of rank $r=5$ with condition numbers $\kappa = 10$ and $\kappa = 10^{5}$, respectively, with those of \texttt{MatrixIRLS} in Figures \ref{fig:sampcomp:3} and \ref{fig:sampcomp:4}.

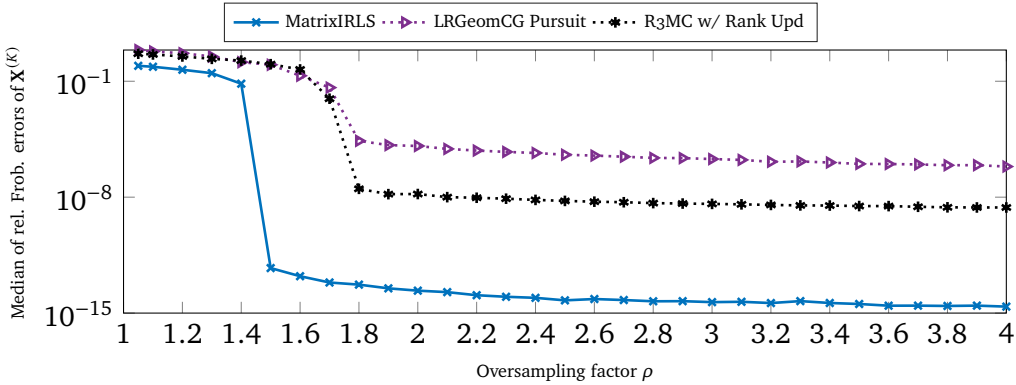
\begin{figure}[h!]
    \setlength\figureheight{35mm} 
    \setlength\figurewidth{120mm}
% This file was created by matlab2tikz.
%
%The latest updates can be retrieved from
%  http://www.mathworks.com/matlabcentral/fileexchange/22022-matlab2tikz-matlab2tikz
%where you can also make suggestions and rate matlab2tikz.
%
\definecolor{mycolor1}{rgb}{0.00000,0.44700,0.74100}%
\definecolor{mycolor2}{rgb}{0.49400,0.18400,0.55600}%
\begin{tikzpicture}

\begin{axis}[%
width=0.978\figurewidth,
height=\figureheight,
at={(0\figurewidth,0\figureheight)},
scale only axis,
xmin=1,
xmax=4,
xlabel style={font=\color{white!15!black}},
xlabel={Oversampling factor $\rho$},
ymode=log,
ymin=1e-15,
ymax=7.68468566470869,
yminorticks=true,
ylabel style={font=\color{white!15!black}},
ylabel={Median of rel. Frob. errors of $\mathbf{X}^{(K)}$},
axis background/.style={fill=white},
legend style={legend cell align=left, align=left, draw=white!15!black},
xlabel style={font=\tiny},ylabel style={font=\tiny},legend style={font=\fontsize{7}{30}\selectfont, anchor=south, legend columns = 3, at={(0.45,1.03)}}
]
\addplot [color=mycolor1, line width=1.0pt, mark=x, mark options={solid, mycolor1}]
  table[row sep=crcr]{%
1.05	0.854255555628291\\
1.1	0.749423097099185\\
1.2	0.494716881980835\\
1.3	0.307031095563721\\
1.4	0.0710950413951957\\
1.5	5.22910462862914e-13\\
1.6	1.68588918367454e-13\\
1.7	7.0339665885088e-14\\
1.8	5.22646777788465e-14\\
1.9	3.12007321555135e-14\\
2	2.26159575881266e-14\\
2.1	1.84275579519234e-14\\
2.2	1.20010224139451e-14\\
2.3	9.65828095082859e-15\\
2.4	8.33489396563835e-15\\
2.5	5.87405834953726e-15\\
2.6	7.06122879837332e-15\\
2.7	6.140965219143e-15\\
2.8	5.15640770569221e-15\\
2.9	5.24064290362918e-15\\
3	4.58553528727644e-15\\
3.1	4.81251473302107e-15\\
3.2	4.03701051254568e-15\\
3.3	5.28365319386791e-15\\
3.4	4.07526625097761e-15\\
3.5	3.54126224418117e-15\\
3.6	2.79718832356394e-15\\
3.7	2.81718954270358e-15\\
3.8	2.69583211552255e-15\\
3.9	2.82440163639793e-15\\
4	2.46058369466678e-15\\
};
\addlegendentry{MatrixIRLS}

\addplot [color=mycolor2, dotted, line width=1.0pt, mark=triangle, mark options={solid, rotate=270, mycolor2}]
  table[row sep=crcr]{%
1.05	7.68468566470869\\
1.1	6.33162290906835\\
1.2	4.92349255222447\\
1.3	3.12162630366101\\
1.4	1.41749403397035\\
1.5	0.98634426581074\\
1.6	0.214297345328806\\
1.7	0.0412216051407876\\
1.8	2.51793451811461e-05\\
1.9	1.38968454137595e-05\\
2	1.23152370619004e-05\\
2.1	8.20433060375076e-06\\
2.2	6.53576698004167e-06\\
2.3	5.40674693300936e-06\\
2.4	4.66020650532623e-06\\
2.5	3.64846077678792e-06\\
2.6	3.24232171440137e-06\\
2.7	2.7723503024717e-06\\
2.8	2.34415869464538e-06\\
2.9	2.28493158805644e-06\\
3	2.00192936817848e-06\\
3.1	1.77801953042947e-06\\
3.2	1.37398137399264e-06\\
3.3	1.43266666214825e-06\\
3.4	1.23575172370206e-06\\
3.5	1.01947795121814e-06\\
3.6	1.00446961483233e-06\\
3.7	9.48841130253936e-07\\
3.8	8.47640618636798e-07\\
3.9	8.56280139705756e-07\\
4	7.0677722786273e-07\\
};
\addlegendentry{LRGeomCG Pursuit}

\addplot [color=black, dotted, line width=1.0pt, mark=asterisk, mark options={solid, black}]
  table[row sep=crcr]{%
1.05	4.60604036755615\\
1.1	4.14801814433686\\
1.2	3.14926448754613\\
1.3	2.24984461047932\\
1.4	1.77007898452388\\
1.5	1.07116419615937\\
1.6	0.515756910915468\\
1.7	0.00880415516599799\\
1.8	3.18527638045956e-08\\
1.9	1.52373312931524e-08\\
2	1.54823306806071e-08\\
2.1	1.00314406720369e-08\\
2.2	9.22369181999578e-09\\
2.3	8.08304684869755e-09\\
2.4	6.83347150933846e-09\\
2.5	5.9417476988411e-09\\
2.6	5.33562674212395e-09\\
2.7	4.94270606318811e-09\\
2.8	4.38103075853045e-09\\
2.9	4.11497079487388e-09\\
3	3.92070499289155e-09\\
3.1	3.68417325414267e-09\\
3.2	3.39925521824471e-09\\
3.3	3.17926085031972e-09\\
3.4	3.11918607579225e-09\\
3.5	2.91043164527757e-09\\
3.6	2.88991847438305e-09\\
3.7	2.62461028536712e-09\\
3.8	2.43543128363482e-09\\
3.9	2.39104263164535e-09\\
4	2.38334404044277e-09\\
};
\addlegendentry{R3MC w/ Rank Upd}

\end{axis}
\end{tikzpicture}%
\caption{Completion of $1000 \times 1000$ matrices of rank $r=5$ with condition number $\kappa=10$, experiment as in Figure \ref{fig:sampcomp:1}.}
\label{fig:sampcomp:3}
\end{figure}

In Figure \ref{fig:sampcomp:3}, we observe that in the presence of a relatively small condition number of $\kappa = 10$, \texttt{MatrixIRLS} is more data efficient than the two rank-adaptaive methods as their phase transition occurs for a larger oversampling factor ($\rho= 1.8$ vs. $\rho = 1.5$). 

On the other hand, it can be seen in Figure \ref{fig:sampcomp:4} that the rank-adaptive strategies \texttt{LRGeomCG
Pursuit} and  \texttt{R3MC w/ Rank Update} shine when completing matrices with large condition number such as $\kappa = 10^5$, as their phase transition occurs at around $\rho= 1.8$ and $\rho = 1.7$, where it occurs at $\rho = 1.9$ for \texttt{MatrixIRLS}. This shows that for large condition number, rank adaptive strategies can outperform the data efficiency of \texttt{MatrixIRLS}, and in both experiments, the phase transitions are considerably better than for their fixed rank versions \texttt{LRGeomCG} and  \texttt{R3MC}, cf. Figures \ref{fig:sampcomp:1} and Figure \ref{fig:sampcomp:2}.
\begin{figure}[h]
    \setlength\figureheight{35mm} 
    \setlength\figurewidth{120mm}
% This file was created by matlab2tikz.
%
%The latest updates can be retrieved from
%  http://www.mathworks.com/matlabcentral/fileexchange/22022-matlab2tikz-matlab2tikz
%where you can also make suggestions and rate matlab2tikz.
%
\definecolor{mycolor1}{rgb}{0.00000,0.44700,0.74100}%
\definecolor{mycolor2}{rgb}{0.49400,0.18400,0.55600}%
\begin{tikzpicture}

\begin{axis}[%
width=0.978\figurewidth,
height=\figureheight,
at={(0\figurewidth,0\figureheight)},
scale only axis,
xmin=1,
xmax=4,
xlabel style={font=\color{white!15!black}},
xlabel={Oversampling factor $\rho$},
ymode=log,
ymin=1e-15,
ymax=2.61139409645811,
yminorticks=true,
ylabel style={font=\color{white!15!black}},
ylabel={Median of rel. Frob. errors of $\mathbf{X}^{(K)}$},
axis background/.style={fill=white},
legend style={legend cell align=left, align=left, draw=white!15!black},
xlabel style={font=\tiny},ylabel style={font=\tiny},legend style={font=\fontsize{7}{30}\selectfont, anchor=south, legend columns = 3, at={(0.45,1.03)}}
]
\addplot [color=mycolor1, line width=1.0pt, mark=x, mark options={solid, mycolor1}]
  table[row sep=crcr]{%
1.05	0.017696793293338\\
1.1	0.0149247467929977\\
1.2	0.0100358741421393\\
1.3	0.00287401737080599\\
1.4	0.0017217492213112\\
1.5	0.000627509308504794\\
1.6	0.000523487605715263\\
1.7	0.000374934439857983\\
1.8	0.000192177543449912\\
1.9	4.26148893627606e-14\\
2	1.58623861308894e-14\\
2.1	1.37788927229948e-14\\
2.2	8.82670925448159e-15\\
2.3	8.02507277531667e-15\\
2.4	6.59203757233433e-15\\
2.5	4.85856656140693e-15\\
2.6	4.84646205110599e-15\\
2.7	4.03688126061311e-15\\
2.8	3.52166740720263e-15\\
2.9	3.57778726155854e-15\\
3	3.39853128746517e-15\\
3.1	3.13012670512855e-15\\
3.2	2.90704826967485e-15\\
3.3	2.64447657593206e-15\\
3.4	2.47776651290129e-15\\
3.5	2.47702997773112e-15\\
3.6	2.18614791909683e-15\\
3.7	2.35729374885045e-15\\
3.8	2.0121153940015e-15\\
3.9	1.92108224894144e-15\\
4	1.82116757856728e-15\\
};
\addlegendentry{MatrixIRLS}

\addplot [color=mycolor2, dotted, line width=1.0pt, mark=triangle, mark options={solid, rotate=270, mycolor2}]
  table[row sep=crcr]{%
1.05	2.61139409645811\\
1.1	1.92937826415313\\
1.2	1.4240050674394\\
1.3	0.858196089636759\\
1.4	0.0571440189086365\\
1.5	0.00219121017064355\\
1.6	5.06080311439453e-06\\
1.7	9.22712201261971e-06\\
1.8	2.75021572328361e-09\\
1.9	1.63560402933676e-09\\
2	1.28404802920269e-09\\
2.1	1.09149156802796e-09\\
2.2	7.56968670143147e-10\\
2.3	6.44017078920023e-10\\
2.4	6.03265562182345e-10\\
2.5	4.64605360276084e-10\\
2.6	4.01978294909424e-10\\
2.7	3.77870237521132e-10\\
2.8	3.41349263644955e-10\\
2.9	3.05195705766597e-10\\
3	2.99387494160118e-10\\
3.1	2.16127034539723e-10\\
3.2	1.5992057100716e-10\\
3.3	1.73904037269403e-10\\
3.4	1.49654699097085e-10\\
3.5	1.20455995360223e-10\\
3.6	1.29854370869532e-10\\
3.7	1.14117236528731e-10\\
3.8	1.09935477094216e-10\\
3.9	9.7770034762092e-11\\
4	9.38533117015799e-11\\
};
\addlegendentry{LRGeomCG Pursuit}

\addplot [color=black, dotted, line width=1.0pt, mark=asterisk, mark options={solid, black}]
  table[row sep=crcr]{%
1.05	0.293936113014958\\
1.1	0.245327074408468\\
1.2	0.153372963092698\\
1.3	0.0161058422117617\\
1.4	0.00486326356421978\\
1.5	0.000321101084437894\\
1.6	5.22005424611248e-05\\
1.7	6.03805928604739e-12\\
1.8	4.17691953016486e-12\\
1.9	2.30820575678731e-12\\
2	1.73409069085845e-12\\
2.1	1.33240780577722e-12\\
2.2	1.17984926698607e-12\\
2.3	1.05727574442111e-12\\
2.4	7.69367858550375e-13\\
2.5	6.86676051910896e-13\\
2.6	6.33340334301605e-13\\
2.7	5.83712837735635e-13\\
2.8	5.29067137574081e-13\\
2.9	5.21280377177225e-13\\
3	4.73863365582093e-13\\
3.1	4.32000148731268e-13\\
3.2	4.0977648031905e-13\\
3.3	3.80006930907078e-13\\
3.4	3.80325545195378e-13\\
3.5	3.56500663616837e-13\\
3.6	3.23585428634483e-13\\
3.7	2.97451323558135e-13\\
3.8	3.05701116505058e-13\\
3.9	2.95470573555244e-13\\
4	2.82766902066654e-13\\
};
\addlegendentry{R3MC w/ Rank Upd}

\end{axis}
\end{tikzpicture}%
\caption{Completion of $1000 \times 1000$ matrices of rank $r=5$ with condition number $\kappa=10^5$, experiment as in Figure \ref{fig:sampcomp:2}.}
\label{fig:sampcomp:4}
\end{figure}
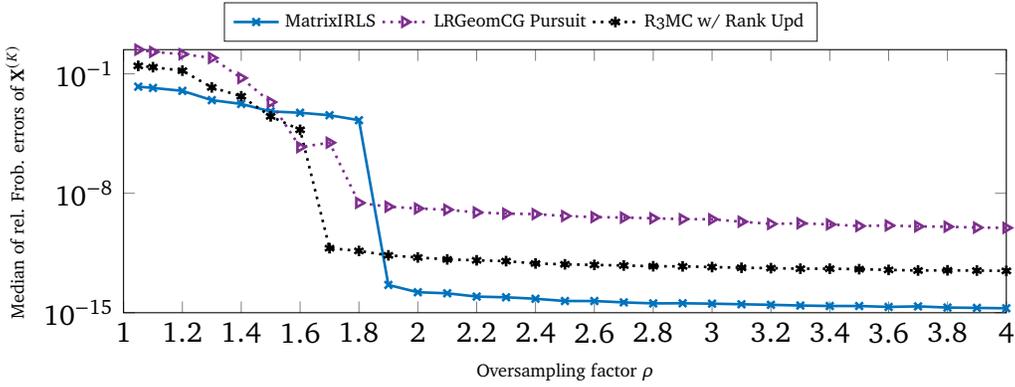

In all experiments so far, we have considered low-rank matrices with $r$ singular values that exponentially decrease from $\kappa$ to $1$, as described in the beginning of this section. This might be a setting that is particularly suitable for rank-adaptive strategies that increase the rank parameter $\widetilde{r}$ one-by-one, as the singular subspaces are all one-dimensional and well-separated. For this reason, in a last experiment, we change this setup and consider ground truth matrices $\f{X}^0$ that have a \emph{plateau} in the set of singular values, potentially presenting a larger challenge for completion methods due to a higher dimensional subspace spanned by a set of multiple singular vectors. In particular, we consider the completion of a $1000 \times 1000$ matrix $\f{X}^0$ with $10$ singular values equal to $10^{10}\cdot \exp(-10\cdot\log(10)\frac{14}{29})$, and with $10$ singular values linearly interpolated on a logarithmic scale between this value and $10^{10}$ and, and another $10$ between this value and $1$ (see also Appendix \ref{appendix:figure7} for an illustration). For a random instance of such a matrix, we report the relative Frobenius error vs. execution time for the methods \texttt{MatrixIRLS} against the rank-adaptive variants of
\texttt{LRGeomCG} and \texttt{R3MC}, here denoted by \texttt{LRGeomCG
Pursuit} and \texttt{R3MC w/ Rank Update} in Figure \ref{fig:plateau}, from random samples with a small oversampling factor of $\rho = 1.5$.

\begin{figure}[h]
\includegraphics{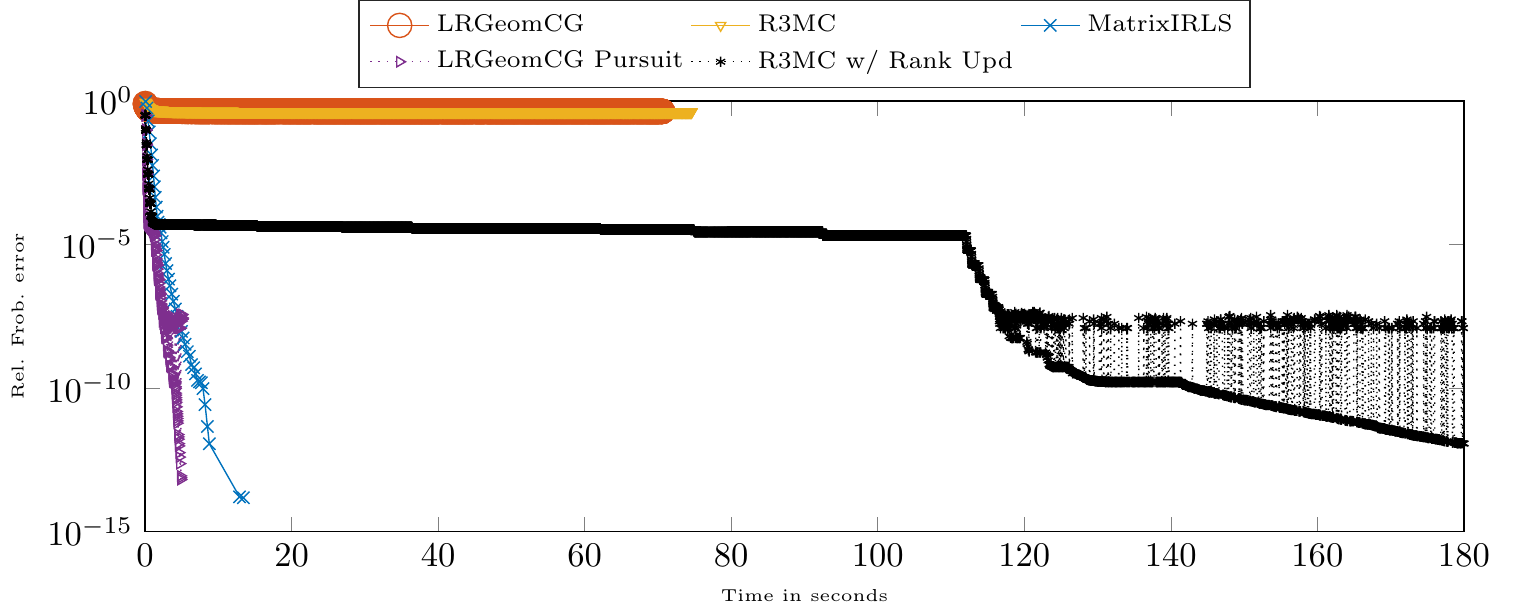}
\caption{Comparison of matrix completion algorithms for $1000 \times
1000$ matrices of rank $r=30$ with condition number $\kappa=10^{10}$ and $10$ equal singular values, oversampling factor of $\rho = 1.5$.}
\label{fig:plateau}
\end{figure}

We observe that the fixed-rank variants \texttt{LRGeomCG} and \texttt{R3MC} are not able to complete the matrix, which is in line with the experiment of Section \ref{sec:runtime:illconditioned}. \texttt{R3MC w/ Rank Update} exhibits a quick error decrease to a range around $6 \cdot 10^{-5}$, after which it just decreases very slowly for around 110 seconds, before converging to $\f{X}^0$ up to an error of around $10^{-12}$ within another $70$ seconds. The stagnation phase presumably corresponds to the learning of the $10$-dimensional singular space of $\f{X}^0$ in the central part of its spectrum. \texttt{LRGeomCG Pursuit}, on the other hand, reaches an error of around $10^{-12}$ already after $5$ seconds, albeit without monotonicity with a fluctuation phase between errors of $10^{-8}$ and $10^{-12}$ from seconds $3$ to $5$. 
For \texttt{MatrixIRLS}, we use a tolerance parameter for the relative residual in the conjugate gradient method of $\text{tol}_{\text{inner}} = 10^{-3}$ and a maximal number of $3$ iterations for the randomized Block Krylov method (cf. \ref{sec:experimental:details} for the default parameters), and observe that the method successfully converges to $\f{X}^0$ slightly slower with a convergence within $13$ seconds, but, remarkably, unlike \texttt{LRGeomCG Pursuit}, with a monotonous error decrease.\footnote{For the default choice of algorithmic parameters as described in Appendix \ref{sec:experimental:details}, we obtain a qualitatively similar behavior for \texttt{MatrixIRLS}, but with a small runtime multiple due to the higher required precision at each iteration.}

\section{Conclusion and Outlook}\label{conclusion}

We formulated \texttt{MatrixIRLS}, a second order method that is able to efficiently complete large, highly ill-conditioned matrices from few samples, a problem for which most state-of-the-art methods fail. It improves on previous approaches for the optimization of non-convex rank objectives by applying a suitable smoothing strategy combined with saddle-escaping Newton-type steps.

As one goal of our investigation has been also to provide an efficient implementation, we focused on the matrix completion problem, leaving the extension of the ideas to other low-rank matrix estimation problems to future work including the case of inexact data or measurement errors. Furthermore, while we establish a local convergence guarantee for the algorithm, a precise analysis of its global convergence behavior might be of interest.

\section*{Software}
An implementation of \texttt{MatrixIRLS} including scripts to reproduce the presented experiments can be found at \url{https://github.com/ckuemmerle/MatrixIRLS}.

\section*{Acknowledgements}
We thank Bart Vandereycken and Boaz Nadler for fruitful discussions and for providing very valuable feedback. 
C.K. is grateful for support by the NSF under the grant NSF-IIS-1837991. C.M.V. is supported by the German Science Foundation (DFG) within the Gottfried Wilhelm Leibniz Prize under Grant BO 1734/20-1, under contract number PO-1347/3-2, under Emmy Noether junior research group KR 4512/1-1 and within Germany’s Excellence Strategy EXC-2111 390814868.

\appendix

\section*{Appendix: Supplementary material for \emph{A Scalable Second Order Method for Ill-Conditioned Matrix Completion from Few Samples}}

This supplementary material is divided into three parts: In \Cref{sec:proof:computational}, we provide details for an efficient implementation of \texttt{MatrixIRLS} and furthermore, show \Cref{thm:MatrixIRLS:computationalcost:Xkk}. Then, we show in \Cref{sec:proof:local:guarantee} and in \Cref{sec:proof:wellconditioning} the theoretical results of \Cref{sec:convergence}. In \Cref{sec:remarks:smoothing:newton}, we provide some details about the interpretation of \texttt{MatrixIRLS} as a saddle-escaping smoothing Newton method, cf. \Cref{sec:Newton:interpretation}. Finally, we present a detailed description of the algorithmic parameters of the experiments of \Cref{sec:numerics} in \Cref{sec:experimental:details}, as well as a remark on an experiment of \Cref{sec:rank_update}.

\section{Computational Aspects of MatrixIRLS} \label{sec:proof:computational}
We first introduce some notation that is suitable to describe an efficient implementation of \Cref{algo:MatrixIRLS}. Whenever we write $\|\f{X}\|_{S_\infty}$, we refer to the Schatten-$\infty$ or spectral norm $\|\f{X}\|_{S_\infty} = \sigma_1 (\f{X})$ of a matrix $\f{X} \in \Rdd$. For $d_1,d_2 \in \N$, $d=\min(d_1,d_2)$ and a vector $\sigma \in \R^d$, we denote the $(d_1 \times d_2)$-matrix with the entries of $\sigma$ on its diagonal (an $0$ otherwise) as $\dg(\sigma) \in \R^{d_1 \times d_2}$, i.e., $\dg(\sigma)_{ij} = \sigma_i \delta_{ij}$ for all $i \in [d_1]$ and $j \in [d_2]$. If $\f{X}\hk \in \Rdd$ is a matrix with singular values $\sigma_i\hk:=\sigma_i(\f{X}\hk)$, for $\epsilon_k > 0$, let $r_k := |\{i \in [d]: \sigma_i(\f{X}\hk) > \epsilon_k\}| = |\{i \in [d]: \sigma_i\hk > \epsilon_k\}|$. Then we write a singular value decomposition of $\f{X}\hk$ such that
\begin{equation} \label{eq:Xk:bothsvds}
\f{X}\hk= \f{U}_k \dg(\sigma^{(k)}) \f{V}_k^* = 
\begin{bmatrix} 
\f{U}\hk & \f{U}_{\perp}\hk
\end{bmatrix}\begin{bmatrix} 
\f{\Sigma}\hk & 0 \\
0 & \f{\Sigma}_{\perp}\hk 
\end{bmatrix}
\begin{bmatrix} 
\f{V}^{(k)*} \\
\f{V}_{\perp}^{(k)*} 
\end{bmatrix},
\end{equation}
where $\f{U}_k \in \R^{d_1 \times d_1}$ and $\f{V}_k \in \R^{d_2 \times d_2}$, and corresponding submatrices $\f{U}\hk \in \R^{d_1 \times r_k}$, $\f{U}_{\perp}\hk \in \R^{d_1 \times (d_1 -r_k)}$, $\f{V}\hk \in \R^{d_2 \times r_k}$, $\f{V}_{\perp}\hk \in \R^{d_2 \times (d_2 -r_k)}$, $\f{\Sigma}\hk := \diag(\sigma_1\hk,\ldots \sigma_{r_k}\hk)$ and \\
$\f{\Sigma}_{\perp}\hk := \dg(\sigma_{r_k+1}\hk,\ldots \sigma_{d}\hk)$. Furthermore, we denote by  $\mathcal{T}_{r_k}(\f{X}\hk)$ the \emph{best rank-$r_k$ approximation} of $\f{X}\hk$, i.e., 
\begin{equation} \label{eq:best:rankr:approximation}
\mathcal{T}_{r_k}(\f{X}\hk) :=  \argmin_{\f{Z}: \rank(\f{Z}) \leq r_k} \|\f{Z} - \f{X}\hk\| = \f{U}\hk \f{\Sigma}\hk \f{V}^{(k)*},
\end{equation}
where $\| \cdot \|$ can be any unitarily invariant norm, due to the Eckardt-Young-Mirsky theorem \cite{Mirsky60}. Let $\mathcal{M}_{r}:= \{ \f{X}\in \Rdd: \rank(\f{X}) = r\}$ the manifold of rank-$r$ matrices. Given these definitions, let
\begin{equation} \label{eq:def:tangentspace}
\begin{split} 
T_k &:= T_{\mathcal{T}_{r_k}(\f{X}\hk)}\mathcal{M}_{r_k}: =  \left\{ \begin{bmatrix} \f{U}\hk\!\!\! &\!\! \!\f{U}_{\perp}\hk \end{bmatrix} \!\!  \begin{bmatrix} \R^{r_k \times r_k} \!\!&\!\!\! \R^{r_k(d_2-r_k)} \\ \R^{(d_1-r_k)r_k}\! \!&\!\! \!\f{0} \end{bmatrix} \!\! \begin{bmatrix} \f{V}\hk \!\!\!&\!\!\! \f{V}_{\perp}\hk \end{bmatrix}^* \right\} \\
&=  \left\{ \begin{bmatrix} \f{U}\hk\!\!\! &\!\! \!\f{U}_{\perp}\hk \end{bmatrix} \!\!  \begin{bmatrix} \f{M}_1 \!\!&\!\!\! \f{M}_2 \\ \f{M}_3\! \!&\!\! \!\f{0} \end{bmatrix} \!\! \begin{bmatrix} \f{V}\hk \!\!\!&\!\!\! \f{V}_{\perp}\hk \end{bmatrix}^*: \f{M}_1\in \R^{r_k \times r_k}, \f{M}_2 \in \R^{r_k \times (d_2-r_k)}, \f{M}_3 \in \R^{(d_1 -r_k) \times r_k} \text{ arbitrary} \right\}  \\
&= \Big\{\f{U}\hk \Gamma_1 \f{V}^{(k)*} + \f{U}\hk \Gamma_2  \big(\f{I}-\f{V}\hk \f{V}^{(k)*}\big) + \big(\f{I}-\f{U}^{(k)}\f{U}^{(k)*}\big) \Gamma_3 \f{V}^{(k)*}:  \\
& \Gamma_1 \in \R^{r_k \times r_k}, \Gamma_2 \in \R^{r_k \times d_2}, \Gamma_3 \in \R^{d_1 \times r_k}  \Big\}
\end{split}
\end{equation}
be tangent space of the rank-$r_k$ matrix manifold at $\mathcal{T}_{r_k}(\f{X}\hk)$, see also \cite{Vandereycken13} and Chapter 7.5 of \cite{Boumal20}. While it is often more advantageous to computationally represent elements of $T_k$ as in the last representation, it becomes clear from the first equality that $T_k$ is an $r_k(d_1+d_2-r_k) = r_k^2 +r_k(d_1-r_k) +r_k(d_2-r_k)$-dimensional subspace of $\Rdd$. If $\{B_i\}_{i =1}^{r_k(d_1+d_2-r_k)}$ is the standard orthonormal basis of the subspace $T_k$ and $S_k := \R^{r_k(d_1+d_2-r_k)}$, let $P_{T_k}: S_k \to T_k$ be the parametrization operator such that for $\gamma \in S_k$,
\[
P_{T_k}(\gamma) := \sum_{i=1}^{r_k(d_1+d_2-r_k)} \gamma_i B_i = \f{U}\hk \Gamma_1 \f{V}^{(k)*} + \f{U}\hk \Gamma_2  \left(\f{I}-\f{V}\hk \f{V}^{(k)*}\right) + (\f{I}-\f{U}^{(k)}\f{U}^{(k)*}) \Gamma_3 \f{V}^{(k)*}
\]
where $\Gamma_1 \in \R^{r_k \times r_k}, \Gamma_2 \in \R^{r_k \times d_2}, \Gamma_2\f{V}\hk= 0, \Gamma_3 \in \R^{d_1 \times r_k},  \f{U}^{(k)*}\Gamma_3 = 0$. The representation $\gamma \in S_k$ of an element of $T_k$ is called  an \emph{intrinsic representation} \cite{Huang17intrinsic}, and while this is not trivial observation, we note that it is possible to calculate the matrices $\Gamma_1$, $\Gamma_2$ and $\Gamma_3$ of $P_{T_k}(\gamma)$ from just from the knowledge of $\gamma$, $\f{U}\hk \in \R^{d_1 \times r_k}$ and $\f{V}\hk \in \R^{d_2 \times r_k}$ in just $8 (d_1+d_2) r^2 = O(D r^2)$ flops, see Algorithm 7 of \cite{Huang17intrinsic}. Furthermore, we denote by $P_{T_k}^*$ the adjoint operator of $P_{T_k}: S_k \to \Rdd$, and this operator  $P_{T_k}^*:\Rdd \to S_k$ maps a matrix $\f{X}\in \Rdd$ first to $\{\Gamma_1,\Gamma_2,\Gamma_3\}: = \big\{\f{U}^{(k)*} \f{X} \f{V}\hk, \f{U}^{(k)*} \f{X}(\f{I}-\f{V}\hk\f{V}^{(k)*}),(\f{I}-\f{U}\hk\f{U}^{(k)*})\f{X} \f{V}\hk\big\}$ and then via Algorithm 6 of \cite{Huang17intrinsic} to $\gamma = P_{T_k}^*(\f{X}) \in S_k$. The computational complexity of this operator is $O(r \|\f{X}\|_0 + D r^2)$.

Together with $P_{T_k}$ and $P_{T_k}^*$, the definition of the following diagonal operator $\f{D}_{S_k}:S_k \to S_k$ allows us to reformulate and simplify the optimal weight operator $W\hk: \Rdd \to \Rdd$ of \Cref{def:optimalweightoperator} as follows. Recalling that $\f{H}_k \in \Rdd$ is such that $(\f{H}_k)_{ij} := \Big(\max(\sigma_i^{(k)},\epsilon_k) \max(\sigma_j^{(k)},\epsilon_k)\Big)^{-1}$ for all $i$ and $j$, we write for each $\f{Z} \in \Rdd$
\begin{equation} \label{eq:Wk:simplification:1}
\begin{split}
& W\hk(\f{Z}) = \f{U}_k \left[\f{H}_k \circ (\f{U}_k^{*} \f{Z} \f{V}_k)\right] \f{V}_k^{*} \\
&=\begin{bmatrix} 
	\f{U}\hk & \f{U}_{\perp}\hk
\end{bmatrix}
\left(\f{H}_k
\circ
\begin{bmatrix}
\f{U}^{(k)*} \f{Z} \f{V}^{(k)} &  \f{U}^{(k)*} \f{Z} \f{V}_{\perp}^{(k)} \\
\f{U}_{\perp}^{(k)*} \f{Z} \f{V}^{(k)} & \f{U}_{\perp}^{(k)*} \f{Z} \f{V}_{\perp}^{(k)} 
\end{bmatrix}
\right)
\begin{bmatrix} 
	\f{V}^{(k)*} \\ \f{V}_{\perp}^{(k)*}
\end{bmatrix} \\
&= \begin{bmatrix} 
	\f{U}\hk & \f{U}_{\perp}\hk
\end{bmatrix}
\left(
\begin{bmatrix}
\f{H}\hk &  \f{H}_{1,2}\hk \\
\f{H}_{2,1}\hk & \epsilon_k^{-2} \mathbf{1}
\end{bmatrix} 
\circ 
\begin{bmatrix}
\f{U}^{(k)*} \f{Z} \f{V}^{(k)} &  \f{U}^{(k)*} \f{Z} \f{V}_{\perp}^{(k)} \\
\f{U}_{\perp}^{(k)*} \f{Z} \f{V}^{(k)} & \f{U}_{\perp}^{(k)*} \f{Z} \f{V}_{\perp}^{(k)} 
\end{bmatrix}
\right)
\begin{bmatrix} 
	\f{V}^{(k)*} \\ \f{V}_{\perp}^{(k)*}
\end{bmatrix} \\
&= \left(P_{T_k} \f{D}_{S_k} P_{T_k}^* + \epsilon_k^{-2} \left(\f{I} - P_{T_k}P_{T_k}^*\right)\right) \f{Z},
\end{split}
\end{equation}
where the matrices $\f{H}\hk \in \R^{r_k \times r_k}$, $\f{H}_{1,2}\hk \in \R^{r_k \times (d_2 - r_k)}$ and $\f{H}_{2,1}\hk \in \R^{ (d_1 - r_k) \times r_k}$ are defined such that
\begin{equation} \label{eq:H:def:small}
\f{H}_{ij}\hk = \left(\sigma_i^{(k)} \sigma_j^{(k)}\right)^{-1} \text{ for all }i,j \in [r_k],
\end{equation}
$\left(\f{H}_{1,2}\hk\right)_{ij} =\left(\sigma_i^{(k)} \epsilon_k\right)^{-1}$ for all $i \in [r_k]$ and $j \in [d_2-r_k]$ and
$ \left(\f{H}_{2,1}\hk\right)_{ij} = \left(\epsilon_k \sigma_j^{(k)} \right)^{-1}$ 
for all $i \in [d_1-r_k]$ and $j \in [r_k]$. Furthermore, $\mathbf{1}$ in the third line is the $((d_1 - r_k) \times (d_2 -r_k))$-matrix of ones $\mathbf{1}$, and $\f{I}$ is the identity operator. Defining $\f{D}_{S_k}:S_k \to S_k$ implicitly through the last equality, we observe that $\f{D}_{S_k}$ is a diagonal matrix with the entries of $\f{H}\hk$, $\f{H}_{1,2}\hk$ and $\f{H}_{2,1}\hk$ enumerated on its diagonal. Recall that the set of indices corresponding to provided entries is defined as $\Omega = \{(i_{\ell},j_{\ell})\} \subset [d_1] \times [d_2]$,
and $P_{\Omega}: \Rdd \to \R^m$ is the subsampling operator 
\[
P_{\Omega}(\f{Z}) = \sum_{(i_{\ell},j_{\ell}) \in \Omega} \langle e_{i_{\ell}}, \f{Z} e_{j_{\ell}}\rangle,
\]
where $e_{i_{\ell}}$ and $e_{j_{\ell}}$ are the $i_{\ell}$-th and $j_{\ell}$-th standard basis vectors of $\R^{d_1}$ and $\R^{d_2}$, respectively.

\subsection{Implementation of MatrixIRLS}\label{sec:implementation}
The definitions above now enable us to implement the weighted least squares step of \eqref{eq:MatrixIRLS:Xdef} efficiently.
\begin{algorithm}[h ]
\caption{Practical implementation of weighted least squares step of \texttt{MatrixIRLS}} \label{algo:MatrixIRLS:mainstep:implementation}
\begin{algorithmic}
\STATE{\bfseries Input:}  Set $\Omega$, observations $\f{y} \in \R^m$, left and right singular vectors $\f{U}\hk \in \R^{d_1 \times r_k}$, $\f{V}\hk \in \R^{d_2 \times r_k}$ and singular values $\sigma_1\hk,\ldots,\sigma_{r_k}\hk$, smoothing parameter $\epsilon_k$, projection $\gamma_k^{(0)} = P_{T_k}^*P_{T_{k-1}}(\gamma_{k-1}) \in S_k$ of solution $\gamma_{k-1} \in S_{k-1}$ of linear system \cref{eq:gamma:system} for previous iteration $k-1$.
\end{algorithmic}
\begin{algorithmic}[1]
\STATE  Compute $\f{h}_k^0:= P_{T_k}^*P_{\Omega}^*\left(\f{y}\right) - \left(\epsilon_k^{2} \left(\f{D}_{S_k}^{-1}- \epsilon_k^{2} \f{I}_{S_k}\right)^{-1} + P_{T_k}^* P_{\Omega}^* P_{\Omega}  P_{T_k}\right) \gamma_k^{(0)}  \in S_k$.
\STATE Solve 	
\begin{equation} \label{eq:gamma:system}
	\left(\epsilon_k^{2} \left(\f{D}_{S_k}^{-1}- \epsilon_k^{2} \f{I}_{S_k}\right)^{-1} + P_{T_k}^* P_{\Omega}^* P_{\Omega}  P_{T_k}\right) \Delta\f{\gamma}_{k} =  \f{h}_k^0	
\end{equation}
	 for $\Delta\gamma_k \in S_k$ by the \emph{conjugate gradient} method \cite{HestenesStiefel52,Meurant}.
\STATE Compute $\gamma_k = \gamma_k^{(0)} + \Delta\gamma_k$.
\STATE Compute residual $\f{r}_{k+1}:= \f{y}-P_{\Omega} P_{T_k} (\f{\gamma}_{k}) \in \R^m$.
\end{algorithmic}
\begin{algorithmic}
\STATE{\bfseries Output:} $\f{r}_{k+1} \in \R^m$ and $\gamma_k \in S_k$.
\end{algorithmic}
\end{algorithm}

In the following lemma, we show that \Cref{algo:MatrixIRLS:mainstep:implementation} indeed computes the solution of the weighted least squares step  \eqref{eq:MatrixIRLS:Xdef} of \Cref{algo:MatrixIRLS}.
\begin{lemma} \label{lemma:weightedleastsquares:implement}
If $\f{r}_{k+1} \in \R^m$ and $\gamma_k \in S_k$ is the output of \Cref{algo:MatrixIRLS:mainstep:implementation}, then $\f{X}\hkk$ as in step  \eqref{eq:MatrixIRLS:Xdef} of \Cref{algo:MatrixIRLS} fulfills
\[
\f{X}\hkk = P_{\Omega}^*(\f{r}_{k+1}) + P_{T_k}(\gamma_k).
\]
\end{lemma}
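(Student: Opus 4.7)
The plan is to explicitly invert the weight operator $W\hk$ using the orthogonal decomposition $\Rdd = T_k \oplus T_k^\perp$ and then reformulate the KKT system of \eqref{eq:MatrixIRLS:Xdef} in the coordinates given by $P_{T_k}$ and $P_\Omega^*$. The key algebraic facts I will exploit are (i) $P_{T_k}^{*}P_{T_k} = I_{S_k}$, since $P_{T_k}$ is induced by an orthonormal basis of $T_k$, so that from the third line of \eqref{eq:Wk:simplification:1} one can check directly that
\[
(W\hk)^{-1} \;=\; P_{T_k}\,\f{D}_{S_k}^{-1}\,P_{T_k}^{*} \;+\; \epsilon_k^{2}\bigl(I - P_{T_k}P_{T_k}^{*}\bigr),
\]
and (ii) $P_\Omega P_\Omega^{*} = I_m$, since $\Omega$ indexes distinct entries.

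Next, the weighted least squares problem \eqref{eq:MatrixIRLS:Xdef} is a strictly convex quadratic program with linear equality constraints, so its unique minimizer is characterized by the KKT system $W\hk(\f{X}\hkk) = P_\Omega^{*}(\lambda)$ together with $P_\Omega(\f{X}\hkk)=\f{y}$, which gives
\[
\f{X}\hkk \;=\; (W\hk)^{-1} P_\Omega^{*}(\lambda) \;=\; P_{T_k}\bigl(\f{D}_{S_k}^{-1}-\epsilon_k^{2} I_{S_k}\bigr) P_{T_k}^{*} P_\Omega^{*}(\lambda) \;+\; \epsilon_k^{2} P_\Omega^{*}(\lambda).
\]
Defining $\f{r}_{k+1} := \epsilon_k^{2}\lambda \in \R^m$ and $\gamma_k := \epsilon_k^{-2}\bigl(\f{D}_{S_k}^{-1}-\epsilon_k^{2}I_{S_k}\bigr) P_{T_k}^{*}P_\Omega^{*}(\f{r}_{k+1}) \in S_k$ immediately yields the claimed representation $\f{X}\hkk = P_\Omega^{*}(\f{r}_{k+1}) + P_{T_k}(\gamma_k)$.

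It then remains to verify that the $\gamma_k$ produced by \Cref{algo:MatrixIRLS:mainstep:implementation} coincides with this expression, and that $\f{r}_{k+1}$ is correctly recovered in step 4. For $\f{r}_{k+1}$ this is easy: applying $P_\Omega$ to the representation above and using $P_\Omega P_\Omega^{*} = I_m$ together with $P_\Omega(\f{X}\hkk) = \f{y}$ gives $\f{r}_{k+1} = \f{y} - P_\Omega P_{T_k}(\gamma_k)$, which is exactly step 4. For $\gamma_k$, I substitute $\f{r}_{k+1} = \f{y} - P_\Omega P_{T_k}(\gamma_k)$ into the defining relation $\epsilon_k^{2}(\f{D}_{S_k}^{-1}-\epsilon_k^{2}I_{S_k})^{-1}\gamma_k = P_{T_k}^{*}P_\Omega^{*}(\f{r}_{k+1})$, and rearrange to obtain
\[
\Bigl(\epsilon_k^{2}(\f{D}_{S_k}^{-1}-\epsilon_k^{2}I_{S_k})^{-1} + P_{T_k}^{*}P_\Omega^{*}P_\Omega P_{T_k}\Bigr)\gamma_k \;=\; P_{T_k}^{*}P_\Omega^{*}(\f{y}),
\]
which is exactly the linear system \eqref{eq:gamma:system} (written for $\gamma_k = \gamma_k^{(0)}+\Delta\gamma_k$, with $\f{h}_k^0$ absorbing the warm-start contribution of $\gamma_k^{(0)}$). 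Since the system matrix is positive definite, the CG iteration in step 2 returns the unique solution, matching the $\gamma_k$ from the closed-form expression above.

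The only subtlety I anticipate is bookkeeping around the warm start $\gamma_k^{(0)} = P_{T_k}^{*}P_{T_{k-1}}(\gamma_{k-1})$: one must check that $\f{h}_k^0$ in step 1 is precisely the residual of \eqref{eq:gamma:system} at $\gamma_k^{(0)}$, so that solving for $\Delta\gamma_k$ is equivalent to solving for $\gamma_k$. Beyond this, the argument is a straightforward KKT-plus-inversion calculation, with the positive definiteness of $\epsilon_k^{2}(\f{D}_{S_k}^{-1}-\epsilon_k^{2}I_{S_k})^{-1} + P_{T_k}^{*}P_\Omega^{*}P_\Omega P_{T_k}$ following from the positivity of both summands (the first being strictly positive because $\sigma_i\hk > \epsilon_k$ on the range of $\f{D}_{S_k}^{-1}-\epsilon_k^{2}I_{S_k}$ corresponding to the leading-block entries, and nonnegative on the off-diagonal blocks where $\f{D}_{S_k}^{-1}-\epsilon_k^{2}I_{S_k}$ is $0$---requiring a small care that I would handle by restricting to the nondegenerate components of $\gamma_k$).
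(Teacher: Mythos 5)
Your proposal is correct, and the core algebraic engine is the same as the paper's---the block decomposition
$(W\hk)^{-1} = P_{T_k}\bigl(\f{D}_{S_k}^{-1}-\epsilon_k^2 \f{I}_{S_k}\bigr)P_{T_k}^* + \epsilon_k^2 \f{I}$
on $T_k \oplus T_k^\perp$---but the route from there to the conclusion is genuinely different. The paper starts from the closed-form constrained least squares solution $\f{X}\hkk = (W\hk)^{-1}P_\Omega^*\bigl(P_\Omega (W\hk)^{-1}P_\Omega^*\bigr)^{-1}(\f{y})$ (its Lemma on optimality) and pushes $\bigl(P_\Omega (W\hk)^{-1}P_\Omega^*\bigr)^{-1}$ through the Sherman--Morrison--Woodbury identity to recognize the intermediate vector $\f{z}$ as $\epsilon_k^{-2}\f{r}_{k+1}$. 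You instead take the KKT characterization $W\hk(\f{X}\hkk)=P_\Omega^*(\lambda)$, $P_\Omega(\f{X}\hkk)=\f{y}$ directly, read off the representation $\f{X}\hkk = P_\Omega^*(\epsilon_k^2\lambda) + P_{T_k}(\gamma_k)$ from the block form of $(W\hk)^{-1}$, and verify that $\f{r}_{k+1}:=\epsilon_k^2\lambda$ and $\gamma_k$ satisfy exactly the relations produced by \Cref{algo:MatrixIRLS:mainstep:implementation}. This bypasses Woodbury entirely, and is if anything a slightly cleaner path: the linear system \eqref{eq:gamma:system} for $\gamma_k$ pops out of the two KKT relations by a single substitution. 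Your Lagrange multiplier $\lambda$ is the same object as the paper's $\f{z}$.

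Two small remarks. The warm-start verification you defer is a one-liner: since $\f{h}_k^0 = P_{T_k}^*P_\Omega^*(\f{y}) - \f{M}\gamma_k^{(0)}$ with $\f{M} = \epsilon_k^2(\f{D}_{S_k}^{-1}-\epsilon_k^2\f{I}_{S_k})^{-1} + P_{T_k}^*P_\Omega^*P_\Omega P_{T_k}$, the solution $\Delta\gamma_k$ of \eqref{eq:gamma:system} gives $\f{M}(\gamma_k^{(0)}+\Delta\gamma_k) = P_{T_k}^*P_\Omega^*(\f{y})$ regardless of $\gamma_k^{(0)}$, so $\gamma_k$ is the unique solution of $\f{M}\gamma = P_{T_k}^*P_\Omega^*(\f{y})$, matching your KKT-derived $\gamma_k$ by invertibility of $\f{M}$. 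More importantly, the concern you raise at the end about ``off-diagonal blocks where $\f{D}_{S_k}^{-1}-\epsilon_k^2\f{I}_{S_k}$ is $0$'' is unfounded: the corresponding diagonal entries of $\f{D}_{S_k}^{-1}$ are $\sigma_i\hk\epsilon_k$ and $\epsilon_k\sigma_j\hk$ with $i,j\in[r_k]$, and since $\sigma_i\hk>\epsilon_k$ for all $i\le r_k$ by definition of $r_k$, every diagonal entry of $\f{D}_{S_k}^{-1}$ exceeds $\epsilon_k^2$. So $\f{D}_{S_k}^{-1}-\epsilon_k^2\f{I}_{S_k}$ is strictly positive definite on all of $S_k$, and no restriction to ``nondegenerate components'' is needed.
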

\begin{proof}[{Proof of \Cref{lemma:weightedleastsquares:implement}}]
Let $W\hk:\Rdd \to \Rdd$ be the weight operator of \Cref{def:optimalweightoperator}. As $W\hk$ is a positive definite operator, it holds that 
 the minimizer 
\[
\f{X}\hkk = \argmin\limits_{P_{\Omega}(\f{X})=\f{y}} \langle \f{X}, W^{(k)}(\f{X}) \rangle
\] 
of \eqref{eq:MatrixIRLS:Xdef} is unique, and it is well-known \cite{Bjoerck96} that the solution of this linearly constrained weighted least squares problem can be written such that
\begin{equation} \label{eq:weightedleastsquares:formula}
\f{X}\hkk =  (W\hk)^{-1}P_{\Omega}^*\left(P_{\Omega}(W\hk)^{-1} P_{\Omega}^*\right)^{-1}(\f{y}),
\end{equation}
where $(W\hk)^{-1}: \Rdd \to \Rdd$ is the \emph{inverse} of the weight operator $W\hk$: This inverse exists as $W\hk$ is self-adjoint and positive definite, which can be shown by realizing that its $(d_1 d_2 \times d_1 d_2)$-matrix representation has eigenvectors $v_i\hk \otimes u_j\hk$, where $v_i\hk \in \R^{d_1}$ and $u_{j}\hk \in \R^{d_2}$ are columns of $\f{V}_k$ and $\f{U}_k$, respectively, and that the eigenvalues of $W\hk$ are just the entries of $\f{H}\hk$.

From this eigendecomposition of $W\hk$ if further follows that the action of $(W\hk)^{-1}:\Rdd \to \Rdd$ is such that for each $\f{Z}\in \Rdd$,
\[
(W\hk)^{-1}(\f{Z}) = \f{U}_k \left[\f{H}_k^{-1} \circ (\f{U}_k^{*} \f{Z} \f{V}_k)\right] \f{V}_k^{*},
\]
where $\f{H}_k^{-1} \in \R^{d_1 \times d_2}$ is the matrix whose entries are the \emph{entrywise} inverse of the entries of $\f{H}_k$. With the same argument as in \eqref{eq:Wk:simplification:1}, we can rewrite $(W\hk)^{-1}$ such that
\begin{equation*}
(W\hk)^{-1} = P_{T_k}\f{D}_{S_k}^{-1} P_{T_k}^* + \epsilon_k^{2} \left(\f{I} - P_{T_k}P_{T_k} ^*  \right)  =P_{T_k} \left(\f{D}_{S_k}^{-1}- \epsilon_k^{2} \f{I}_{S_k}\right)P_{T_k}^* + \epsilon_k^{2} \f{I},
\end{equation*}
where $\f{I}_{S_k}$ is the identity on $S_k$ and $\f{D}_{S_k}^{-1}$ is the diagonal matrix that is the inverse of $\f{D}_{S_k}$ in \eqref{eq:Wk:simplification:1}.

Since $P_{\Omega} P_{\Omega}^* = \f{I}_m$, we can use this representation of $(W\hk)^{-1}$ to write
\[
\left(P_{\Omega}(W\hk)^{-1} P_{\Omega}^*\right)^{-1} = \left( P_{\Omega} P_{T_k} \left(\f{D}_{S_k}^{-1}- \epsilon_k^{2} \f{I}_{S_k}\right)P_{T_k}^* P_{\Omega}^* + \epsilon_k^{2} \f{I}_m \right)^{-1}.
\]
Using the \emph{Sherman-Morrison-Woodbury} formula \cite{Woodbury50} 
\begin{equation*}
(\f{E} \f{C} \f{F}^* + \f{B})^{-1} = \f{B}^{-1} - \f{B}^{-1} \f{E} ( \f{C}^{-1} + \f{F}^* \f{B}^{-1} \f{E} )^{-1} \f{F}^* \f{B}^{-1}
\end{equation*}
for $\f{B}:= \epsilon_k^{2} \f{I}_m$, $\f{C} := \left(\f{D}_{S_k}^{-1}- \epsilon_k^{2} \f{I}_{S_k}\right)$ and $\f{E}= \f{F} := P_{\Omega} P_{T_k}$, we obtain that
\begin{equation} \label{eq:MatrixIRLS:Woodbury:application}
\left(P_{\Omega}(W\hk)^{-1} P_{\Omega}^*\right)^{-1}= \epsilon_k^{-2} \f{I} - \epsilon_k^{-2}P_{\Omega} P_{T_k}\left(\epsilon_k^{2} \f{C}^{-1} + P_{T_k}^* P_{\Omega}^* P_{\Omega} P_{T_k}\right)^{-1} P_{T_k}^* P_{\Omega}^* = \epsilon_k^{-2} \f{I} - \epsilon_k^{-2}P_{\Omega} P_{T_k}\f{M}^{-1} P_{T_k}^* P_{\Omega}^*
\end{equation}
with linear system matrix $\f{M}:=\epsilon_k^{2} \f{C}^{-1} + P_{T_k}^* P_{\Omega}^* P_{\Omega} P_{T_k} $, noting that $\f{C} = \left(\f{D}_{S_k}^{-1}- \epsilon_k^{2} \f{I}_{S_k}\right)$ is invertible since $(\f{H}_{ij}\hk)^{-1} = \sigma_i^{(k)} \sigma_j^{(k)} > \epsilon_k^2$ for all $i,j \in [r_k]$ and since $(\f{D}_{ii}\hk)^{-1} = \sigma_i^{(k)} \epsilon_k > \epsilon_k^2$ for all $i \in [r_k]$.

Next, we note that the definition $\gamma_k = \gamma_k^{(0)} + \Delta\gamma_k$ and \eqref{eq:gamma:system} implies that
\begin{equation} \label{eq:gammak:Mmin1y}
\gamma_k = \gamma_k^{(0)} + \Delta\gamma_k =  \gamma_k^{(0)} + \f{M}^{-1}\f{h}_k^0 = \gamma_k^{(0)} + \f{M}^{-1} \left( P_{T_k}^*P_{\Omega}^*\left(\f{y}\right) - \f{M} \gamma_k^{(0)}\right) =  \f{M}^{-1}P_{T_k}^*P_{\Omega}^*\left(\f{y}\right).
\end{equation}
Inserting this into \eqref{eq:MatrixIRLS:Woodbury:application}, we see that the residual $\f{r}_{k+1}$ of \Cref{algo:MatrixIRLS:mainstep:implementation} satisfies
\[
\f{z} := \left(P_{\Omega}(W\hk)^{-1} P_{\Omega}^*\right)^{-1} (\f{y}) = \epsilon_k^{-2} \f{y} - \epsilon_k^{-2}P_{\Omega} P_{T_k}\f{M}^{-1} P_{T_k}^* P_{\Omega}^*(\f{y}) = \epsilon_k^{-2}\left(\f{y} -  P_{\Omega} P_{T_k} (\gamma_k)\right) =  \epsilon_k^{-2} \f{r}_{k+1}.
\]
Thus, we obtain the representation 
\begin{equation} \label{eq:Z:calculation:computational}
\begin{split}
\f{X}\hkk &= (W\hk)^{-1}P_{\Omega}^* \left( P_{\Omega} (W\hk)^{-1}P_{\Omega}^*\right)^{-1}(\f{y}) = (W\hk)^{-1}P_{\Omega}^* (\f{z}) \\
&= \epsilon_k^{2} \left[\f{I} + \epsilon_k^{-2} P_{T_k} \f{C} P_{T_k}^*  \right]P_{\Omega}^* (\f{z}) = \epsilon_k^2 P_{\Omega}^* (\f{z}) +   P_{T_k} \f{C}  P_{T_k}^* P_{\Omega}^*(\f{z}) \\
&= \epsilon_k^{2}P_{\Omega}^* (\f{z}) + P_{T_k} \f{C} \f{C}^{-1} \left(\epsilon_k^{2} \f{C}^{-1} + P_{T_k}^* P_{\Omega}^* P_{\Omega} P_{T_k}\right)^{-1} P_{T_k}^* P_{\Omega}^*(\f{y}) \\
&= P_{\Omega}^* (\f{r}_{k+1}) + P_{T_k}(\gamma_k),
\end{split}
\end{equation}
for $\f{X}\hkk$, using \eqref{eq:PTkPOmegaz} in the fifth equality, where \eqref{eq:PTkPOmegaz} represents
\begin{equation} \label{eq:PTkPOmegaz}
\begin{split}
P_{T_k}^* P_{\Omega}^*(\f{z}) &= \epsilon_k^{-2} P_{T_k}^* P_{\Omega}^*(\f{r}_{k+1}) = \epsilon_k^{-2}  \left(P_{T_k}^* P_{\Omega}^*(\f{y})-  P_{T_k}^* P_{\Omega}^*P_{\Omega} P_{T_k} (\gamma_k)  \right)\\
&= \epsilon_k^{-2}  \left( P_{T_k}^* P_{\Omega}^*(\f{y})-  P_{T_k}^* P_{\Omega}^*P_{\Omega} P_{T_k} \f{M}^{-1} P_{T_k}^* P_{\Omega}^*(\f{y})\right) \\
&= \epsilon_k^{-2}  \left( P_{T_k}^* P_{\Omega}^*(\f{y})-  P_{T_k}^* P_{\Omega}^*P_{\Omega} P_{T_k}  \left(\epsilon_k^{2} \f{C}^{-1} + P_{T_k}^* P_{\Omega}^* P_{\Omega} P_{T_k}\right)^{-1} P_{T_k}^* P_{\Omega}^*(\f{y})\right)    \\
&= \epsilon_k^{-2} \left(P_{T_k}^* P_{\Omega}^*(\f{y})-  \left(P_{T_k}^* P_{\Omega}^*P_{\Omega} P_{T_k}  \pm \epsilon_k^{2} \f{C}^{-1}  \right) \left(\epsilon_k^{2} \f{C}^{-1} + P_{T_k}^* P_{\Omega}^* P_{\Omega} P_{T_k}\right)^{-1} P_{T_k}^* P_{\Omega}^*(\f{y})\right)  \\
&= \f{C}^{-1}  \left(\epsilon_k^{2} \f{C}^{-1} + P_{T_k}^* P_{\Omega}^* P_{\Omega} P_{T_k}\right)^{-1} P_{T_k}^* P_{\Omega}^*(\f{y}),
\end{split}
\end{equation}
which uses \eqref{eq:gammak:Mmin1y} in the third equality and the definition of $\f{M}$ in the fourth equality.

This finishes the proof.
\end{proof}

With \Cref{lemma:weightedleastsquares:implement}, we are now able to prove \Cref{thm:MatrixIRLS:computationalcost:Xkk}.

\begin{proof}[{Proof of \Cref{thm:MatrixIRLS:computationalcost:Xkk}}]
Based on \Cref{lemma:weightedleastsquares:implement}, we can compute the representation $\f{X}\hkk = P_{\Omega}^*(\f{r}_{k+1}) + P_{T_k}(\gamma_k)$ for $\f{X}\hkk$ using \Cref{algo:MatrixIRLS:mainstep:implementation}. By the assumption of \Cref{thm:MatrixIRLS:computationalcost:Xkk}, we know that $r =\widetilde{r}=r_k$.

The main computational cost in \Cref{algo:MatrixIRLS:mainstep:implementation} lies in the application of the operators $(P_{\Omega} P_{T_k})^* = P_{T_k}^* P_{\Omega}^*:\R^m \to S_k$, $P_{\Omega} P_{T_k}: S_k \to \R^m$ and $\left(\f{D}_{S_k}^{-1}- \epsilon_k^{2} \f{I}_{S_k}\right)^{-1} : S_k \to S_k$. The application of $\left(\f{D}_{S_k}^{-1}- \epsilon_k^{2} \f{I}_{S_k}\right)^{-1}$ has a time complexity of $r(d_1+d_2-r) = O(r D)$ as the operator is diagonal. The action of $P_{T_k}^* P_{\Omega}^*$ and $P_{\Omega} P_{T_k}$ can be computed as in \Cref{algo:implement:PomegaPTkstar} and \Cref{algo:implement:PomegaPTk}, respectively.

\begin{algorithm}[h]
\caption{Implementation of $P_{T_k}^* P_{\Omega}^*: \R^m \to S_k$} \label{algo:implement:PomegaPTkstar}
\begin{algorithmic}
\STATE{\bfseries Input:}  Argument vector $\f{y} \in \R^m$, index set $\Omega$, left and right singular vectors $\f{U}\hk \in \R^{d_1 \times r_k}$, $\f{V}\hk \in \R^{d_2 \times r_k}$. 
\end{algorithmic}
\begin{algorithmic}[1]
\STATE $\f{A}_1 = \f{U}^{(k)*}P_{\Omega}^*(\f{y}) \in \R^{r _k\times d_2}$. \hfill $\triangleright$ $m r_k$ flops
\STATE $\f{A}_2 = P_{\Omega}^*(\f{y}) \f{V}^{(k)} \in \R^{d_1\times r_k}$. \hfill $\triangleright$ $m r_k$ flops  
\STATE $\Gamma_1 =  \f{A}_1  \f{V}^{(k)}  \in \R^{r_k \times r_k}$. \hfill $\triangleright$ $d_2 r_k^2$ flops
\STATE $\Gamma_2 = \f{A}_1 - \Gamma_1  \f{V}^{(k)*} \in \R^{r _k\times d_2}$. \hfill   $\triangleright$ $2 d_2 r_k^2$ flops
\STATE $\Gamma_3 = \f{A}_2 - \f{U}\hk \Gamma_1 \in \R^{d_1\times r_k}$. \hfill   $\triangleright$ $d_1 r_k^2$ flops
\STATE Apply Algorithm 6 of \cite{Huang17intrinsic} to compute\footnotemark \;$\gamma \in S_k$ from $\{\Gamma_1,\Gamma_2,\Gamma_3\}$. \hfill   $\triangleright$ $4 r_k^2(d_1+d_2-r_k)+2 r_k^2$ flops
\end{algorithmic}
\begin{algorithmic}
\STATE{\bfseries Output:} $\gamma \in S_k$.
\end{algorithmic}
\end{algorithm}
\footnotetext{In the notation of  \cite{Huang17intrinsic}, we have the correspondence that $U = \f{U}\hk$, $V=\f{V}\hk$, $\dot S = \Gamma_1$, $S \dot V^T = \Gamma_2$ and $\dot U S = \Gamma_3$.}

\begin{algorithm}[h]
\caption{Implementation of $P_{\Omega} P_{T_k}: S_k \to  \R^m$} \label{algo:implement:PomegaPTk}
\begin{algorithmic}
\STATE{\bfseries Input:}  Argument vector $\f{\gamma} \in S_k$, index set $\Omega$, left and right singular vectors $\f{U}\hk \in \R^{d_1 \times r_k}$, $\f{V}\hk \in \R^{d_2 \times r_k}$. 
\end{algorithmic}
\begin{algorithmic}[1]
\STATE Apply Algorithm 7 of \cite{Huang17intrinsic} to compute $\{\Gamma_1,\Gamma_2,\Gamma_3\}$ from \;$\gamma \in S_k$.  \hfill   $\triangleright$ $8 (d_1+d_2) r_k^2$ flops
\STATE $\f{N}_1 =  \f{U}\hk \Gamma_1 \in \R^{d_1 \times r_k}$. \hfill $\triangleright$ $d_1 r_k^2$ flops
\STATE $\f{N}_2 = \Gamma_2 \f{V}\hk \in \R^{r_k \times r_k}$. \hfill $\triangleright$ $d_2 r_k^2$ flops
\STATE $\f{N}_3 = \f{U}^{(k)*} \Gamma_3 \in \R^{r_k \times r_k}$. \hfill $\triangleright$ $d_1 r_k^2$ flops
\STATE $\f{N}_4 = \f{N}_1 - \f{U}\hk \f{N}_2 + \Gamma_3 - \f{U}\hk \f{N}_3 \in \R^{d_1 \times r_k}$. \hfill $\triangleright$ $d_1 r_k^2$ flops
\STATE Set $\f{y}$ such that $\f{y}_{\ell} =  P_{\Omega}( \f{N}_4 \f{V}^{(k)*})_{\ell} = \sum_{k=1}^r (\f{N}_4)_{i_\ell,k} (\f{V}^{(k)})_{j_{\ell},k}$ for each $\ell \in [m]$. \hfill $\triangleright$ $2 m r_k$ flops
\STATE Set $\f{y}$ such that $\f{y}_{\ell} = \f{y}_{\ell} +  P_{\Omega}( \f{U}\hk \Gamma_2)_{\ell} = \sum_{k=1}^r ( \f{U}\hk)_{i_\ell,k} (\Gamma_2^*)_{j_{\ell},k}$ for each $\ell \in [m]$. \hfill $\triangleright$ $2 m r_k$ flops
\end{algorithmic}
\begin{algorithmic}
\STATE{\bfseries Output:} $\f{y} \in \R^m$.
\end{algorithmic}
\end{algorithm}

Using \Cref{algo:implement:PomegaPTkstar} and \Cref{algo:implement:PomegaPTk}, we see that the first step of \Cref{algo:MatrixIRLS:mainstep:implementation} has a time complexity of $O(m r + r^2 D)$ flops, and each inner iteration of the conjugate gradient method in step 2 of \Cref{algo:MatrixIRLS:mainstep:implementation} has likewise a time complexity of $O(m r +r^2 D)$ flops. Finally, step 3 takes also $O(m r + r^2 D)$ flops.

We observe that the linear system \eqref{eq:gamma:system} is positive definite, since $\epsilon_k^{2} \left(\f{D}_{S_k}^{-1}- \epsilon_k^{2} \f{I}_{S_k}\right)^{-1}$ is diagonal with positive entries and since $P_{T_k}^* P_{\Omega}^* P_{\Omega}  P_{T_k}$ is a symmetric, positive definite operator. Thus, it is possible to use the conjugate gradient (CG) method, whose main step applies the three operators above at each iteration. It is known that in general, the CG method terminates with the exact solution $\gamma_k$ after at most $N_{\text{CG\_inner}} = \dim(S_k)=  r (d_1 +d_2-r)$ iterations. However, if the system matrix 
\[
\frac{\epsilon_k^{2} \f{I}}{\f{D}_{S_k}^{-1}- \epsilon_k^{2} \f{I}_{S_k}} + P_{T_k}^* P_{\Omega}^* P_{\Omega}  P_{T_k} 
\]
is well-conditioned (for example, with a condition number bounded by a small constant), the CG method can be used as an inexact solver of \eqref{eq:gamma:system},
returning very high precision approximate solutions \emph{after a constant number} of iterations $N_{\text{CG\_inner}}$, thus, amounting to a time complexity of $O \left( (m r + r^2 D) \cdot N_{\text{CG\_inner}} \right)$. We refer to \Cref{thm:wellconditioning} for a result that ensures this well-conditioning of the system matrix under certain conditions.

Since we have obtained the representation  $\f{X}\hkk = P_{\Omega}^*(\f{r}_{k+1}) + P_{T_k}(\gamma_k)$ of $\f{X}\hkk$ (which is approximate by nature if an iterative solver such as CG is used), we can now apply Algorithm 7 of \cite{Huang17intrinsic} to $\gamma_k \in S_k$ to compute $\f{M}_1\hkk \in \R^{d_2 \times r}$ and $\f{M}_2\hkk \in \R{d_1 \times r}$ such that

\[
\f{X}\hkk = P_{\Omega}^*(\f{r}_{k+1})  +\f{U}\hk \f{M}_{1}^{(k+1)*} + \f{M}_2^{(k+1)} \f{V}^{(k)*},
\]
by setting $\f{M}_{1}^{(k+1)}= \f{V}^{(k)*} \Gamma_1^* + \Gamma_2^{*} \in \R^{d_2 \times r}$ and $\f{M}_2^{(k+1)} = \Gamma_3 \in \R^{d_1 \times r}$ if $\{\Gamma_1,\Gamma_2,\Gamma_3\}$ is the output of Algorithm 7 of \cite{Huang17intrinsic}. This last step has a time complexity of $8 (d_1+d_2) r^2+ r^2 d_2+ r d_2 = O(r^2 D)$ flops, so that we obtain a total time complexity of $O \left( (m r + r^2 D) \cdot N_{\text{CG\_inner}} \right)$ for computing $\f{X}\hkk$.

Including $\f{r}_{k+1} \in \R^m$, $\f{U}\hk \in \R^{d_1 \times r}$ and $\f{V}\hk \in \R^{d_2 \times r}$ this amounts to a representation of $\f{X}\hkk$ with a space complexity of $m + 2r(d_1 + d_2) = O(m + r D)$. Since also the space requirement of the intermediate variables does not exceed $O(m + r D)$, this finishes the proof of \Cref{thm:MatrixIRLS:computationalcost:Xkk}.
\end{proof}

Based on \Cref{thm:MatrixIRLS:computationalcost:Xkk}, we see that in \texttt{MatrixIRLS}, it is \emph{never} necessary to work with full $(d_1 \times d_2)$-matrices. In order to update the smoothing parameter $\epsilon_{k+1}$ as in \eqref{eq:MatrixIRLS:epsdef}, we need the $\widetilde{r}+1$-th singular value of $\f{X}\hkk$. Furthermore, to update the information to define the weight operator $W\hkk$, we need to find the number $r_{k+1}$ of singular values of $\f{X}\hkk$ that are larger than $\epsilon_{k+1}$, and their corresponding left and right singular vectors. Due to the definition of \eqref{eq:MatrixIRLS:epsdef}, it is clear that $r_{k+1} \geq \widetilde{r}$, and $r_{k+1} = \widetilde{r}$ for each iteration $k$ when the smoothing parameter decreases, i.e., for each $k$ with $\epsilon_{k+1} < \epsilon_{k}$.  

In our experiments on exact completion of rank-$r$ matrices with oracle knowledge of the rank $r$, we chose $\widetilde{r}=r$, and we observe that in most iterations of most of our experiments with \texttt{MatixIRLS}, it holds that $r_k = r$. In rare cases, we observe that $r_k = c r$ with a small constant $c >1$ for a small number of the iterations $k$. On the other hand, we do not have a theoretical statement that bounds $r_k$ in general.

The $r_k$ singular values and vector pairs of $\f{X}\hkk$ can be computed efficiently using any suitable method that uses matrix-matrix or matrix-vector products, as, for example, matrix-vector products with $\f{X}\hkk$ can be calculated in $m+ 2 r_k (d_2+d_1) = O(m +r_k D)$ due to its ``sparse + low-rank'' structure. Such a suitable method can be a (randomized) block Lanczos method \cite{Golub77,MuscoMusco15,Yuan18}. In our implementation we used a version of the method described in \cite{MuscoMusco15}, which allows us to compute a good approximation of the needed singular triplets in a time complexity of $O(m r_k +r_k^2 D)$ \cite{Yuan18}.

\begin{remark}
The authors of the recent preprint \cite{Luo20} propose an iterative method for rank-constrained least squares that has certain similarities to ours from a computational point of view. In particular, it can be shown that Algorithm 1 (called \texttt{RISRO}) of \cite{Luo20} is equivalent to solving the equation
\[
\left(P_{T_k}^* P_{\Omega}^* P_{\Omega}  P_{T_k}\right) \f{z}\hk =  P_{T_k}^* P_{\Omega}^* (\f{y})
\]
for $\f{z\hk} \in S_k$ in our notation, if specialized to matrix completion (see Theorem 2 and equation (58) of \cite{Luo20}), if the parameter $r_k$ coincides with the rank parameter of the rank-constraint least squares. Comparing this to \eqref{eq:gamma:system}, it can be observed that the Riemannian Gauss-Newton step of \cite{Luo20} corresponds to choosing the weights such that the entries of $\f{H}\hk$, $\f{H}_{1,2}\hk$ and $\f{H}_{2,1}\hk$ are all chosen equal and the smoothing parameter is chosen such that $\epsilon_k = 0$, rendering $\epsilon_k^{2} \left(\f{D}_{S_k}^{-1}- \epsilon_k^{2} \f{I}_{S_k}\right)^{-1} = 0$ in \eqref{eq:gamma:system}. 

On the other hand, the interpretation of \texttt{MatrixIRLS} and \texttt{RISRO} is quite different, as \texttt{MatrixIRLS} can be interpreted as a majorize-minimize method for smoothed log-det objectives \cref{eq:smoothing:Fpeps} with updated smoothing, whereas \texttt{RISRO} is harder to be interpreted with respect to a rank surrogate objective function, but rather follows a rank-constrained least squares framework.
\end{remark}

\section{Proof of \Cref{cor:MatrixIRLS:localconvergence:matrixcompletion}} \label{sec:proof:local:guarantee}
In this section, we prove under a random sampling model on the location of the provided entries, \texttt{MatrixIRLS} converges locally to a low-rank completion of the data with high probability, and that the convergence rate is quadratic, as described in \Cref{cor:MatrixIRLS:localconvergence:matrixcompletion}.

First, we shortly elaborate on our notion of \emph{incoherence} (see \Cref{def:incoherence}), which quantifies the alignment of the standard basis $(e_i e_j^*)_{i=1,j=1}^{d_1,d_2}$ of $\Rdd$ with the \emph{tangent space} onto the rank-$r$ manifold at rank-$r$ matrix at a specific rank-$r$ matrix, that we use in \Cref{cor:MatrixIRLS:localconvergence:matrixcompletion}.

\begin{remark} \label{remark:incoherence:1}
We note that the assumption that a rank-$r$ matrix $\f{X} \in \Rdd$ is $\mu_0$-incoherent according to \Cref{def:incoherence} is \emph{weaker} than similar assumptions described in Definition 1.2, A0 and A1 of \cite{CR09} and Definition 1 and Theorem 2 of \cite{recht}, and even than the assumption (2) of \cite{Chen15}, which is the weakest available incoherence condition in the literature that is used for showing successful completion by nuclear norm minimization. More precisely, \cite{Chen15} calls a matrix $\f{X}$ $\mu_0$-incoherent if
\begin{equation} \label{eq:Chen:incoherence}
\max_{1 \leq i\leq d_1} \| \f{U}^* e_i\|_2 \leq \sqrt{\frac{\mu_0 r}{d_1}} \quad \text{ and } \quad \max_{1 \leq j \leq d_2} \| \f{V}^* e_j\|_F \leq \sqrt{\frac{\mu_0 r}{d_2}}.
\end{equation}
In fact, condition \eqref{eq:Chen:incoherence} is \emph{stronger} than \eqref{eq:MatrixIRLS:incoherence}. If $\f{U}\in \R^{d_1 \times r}$ and $\f{V} \in \R^{d_2 \times r}$ are the left and right singular matrices corresponding to the $r$ non-zero singular values of $\f{X}$, we can write the projection operator $\y{P}_T: \Rdd \to \Rdd$ that projects onto the tangent space $T$ such $\y{P}_T(\f{Z}) = \f{U} \f{U}^{*} \f{Z} + \f{Z} \f{V} \f{V}^* -  \f{U} \f{U}^{*} \f{Z} \f{V} \f{V}^*$. Therefore, it can be seen that
\[
\begin{split}
\|\y{P}_T(e_i e_j^*)\|_F^2 &= \|\f{U}\f{U}^* e_i e_j^* + e_i e_j^*\f{V}\f{V}^*-\f{U}\f{U}^*e_i e_j^* \f{V}\f{V}^*\|_F^2 = \|\f{U}\f{U}^*e_i e_j^*(\f{I}-\f{V}\f{V}^*) + e_i e_j^*\f{V}\f{V}^* \|_F^2 \\
&= \|\f{U}\f{U}^*e_i e_j^*(\f{I}-\f{V}\f{V}^*)\|_F^2 + \|e_i e_j^*\f{V}\f{V}^* \|_F^2 \leq \|\f{U}\f{U}^* e_i e_j^*\|_F^2 \|\f{I}-\f{V}\f{V}^*\|^2 + \|e_i e_j^*\f{V}\f{V}^* \|_F^2 \\
&\leq \|\f{U}^*e_i e_j^*\|_F^2 +  \|e_i e_j^*\f{V}\|_F^2 = \|\f{U}^*e_i\|_2^2 + \|\f{V}^* e_j\|_2^2 \leq \frac{\mu_0 r}{d_1} + \frac{\mu_0 r}{d_2} \leq  \frac{\mu_0 r(d_1 + d_2)}{d_1 d_2}
\end{split} 
\]
for any $i \in [d_1]$, $j \in [d_2]$, if \eqref{eq:Chen:incoherence} is fulfilled, which holds since
\[
\|\f{U}^*e_i e_j^*\|_F^2 = \trace( e_j e_i^* \f{U} \f{U}^* e_i e_j^*) = \trace(e_i^* \f{U} \f{U}^* e_i) = e_i^* \f{U} \f{U}^* e_i = \|\f{U}^* e_i\|_2^2
\]
and similarly $\|e_i e_j^*\f{V}\|_F^2 = \|\f{V}^* e_j\|_2^2$.
\end{remark}

\subsection{Interplay between sampling operator and tangent space}

In the statement of \Cref{cor:MatrixIRLS:localconvergence:matrixcompletion}, we assume that the index set $\Omega$ is \emph{drawn uniformly at random without replacement}. In our proof below, however, we use a sampling model on the locations $\Omega = (i_{\ell},j_{\ell})_{\ell=1}^m$ corresponding to \emph{independent sampling with replacement}. It is well-known (see, e.g., Proposition 3 of \cite{recht}) that the statement then carries over to the above model of sampling without replacement.

As a preparation for our proof, we recall well-known result from \cite{recht} that bounds the number of repetitions of each location in $\Omega$ under the random sampling model with replacement.
\begin{lemma}[{Proposition 5 of \cite{recht}}]\label{lem:repetition}
Let $D = \max(d_1,d_2)$ and $\beta > 1$, let $\Omega=(i_{\ell},j_{\ell})_{\ell=1}^m$ be a multiset of double indices from $[d_1] \times [d_2]$ fulfilling $m < d_1 d_2$ that are sampled independently with replacement. 
Then with probability at least $1-D^{2-2\beta}$, the maximal number of repetitions of any entry in $\Omega$ is less than 
$\frac{8}{3}\beta\log(D)$ for $D\geq 9$ and $\beta > 1$. Consequently, we have that with probability of at least $1-D^{2-2\beta}$, the operator $\y{R}_{\Omega}: \Rdd \to \Rdd$ defined such that
\begin{equation} \label{eq:ROmega:def}
\y{R}_{\Omega}(\f{X}) := P_{\Omega}^*(P_{\Omega}(\f{X})) = \sum_{\ell=1}^m \langle e_{i_{\ell}} e_{j_{\ell}}^*, \f{X} \rangle e_{i_{\ell}} e_{j_{\ell}}^*
\end{equation}
fulfills
\[
	\|\y{R}_\Omega\|_{S_\infty} \leq \frac{8}{3}\beta\log(D).
\]
\end{lemma}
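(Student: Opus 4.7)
The plan is to reduce the claim to a concentration estimate for binomial multiplicities and combine it with a deterministic identification of $\y{R}_\Omega$ as a diagonal operator. For each $(i,j) \in [d_1] \times [d_2]$, let $N_{ij} := |\{\ell \in [m]: (i_\ell,j_\ell)=(i,j)\}|$ denote the multiplicity of the entry in $\Omega$. Since the samples are drawn i.i.d.\ uniformly on $[d_1] \times [d_2]$, each $N_{ij}$ follows a binomial law with parameters $m$ and $p = 1/(d_1 d_2)$, and its mean satisfies $\mu := m p < 1$ by the hypothesis $m < d_1 d_2$.

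First I would invoke the multiplicative Chernoff bound $\Pro(N_{ij} \geq t) \leq (e\mu/t)^t$, valid for $t \geq \mu$, and substitute $t = \tfrac{8}{3}\beta \log(D)$. Using $\mu < 1$, the inequality $(e/t)^t \leq D^{-2\beta}$ reduces to $t(\log t - 1) \geq 2\beta \log D$, which after rearrangement amounts to $\log(\tfrac{8}{3}\beta \log D) \geq \tfrac{7}{4}$. The hypotheses $\beta > 1$ and $D \geq 9$ are calibrated precisely so that this inequality holds at the corner case $(\beta,D) = (1,9)$, with the constant $\tfrac{8}{3}$ chosen accordingly. A union bound over the $d_1 d_2 \leq D^2$ cells then yields $\max_{(i,j)} N_{ij} < \tfrac{8}{3}\beta \log(D)$ with probability at least $1 - D^{2-2\beta}$.

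For the operator norm statement, I would observe that $\{e_i e_j^*\}_{(i,j)}$ is an orthonormal basis of $\Rdd$ with respect to the Frobenius inner product, and that $\y{R}_\Omega(e_i e_j^*) = N_{ij}\, e_i e_j^*$ by direct inspection of its definition. Hence $\y{R}_\Omega$ is self-adjoint, positive semidefinite, and diagonal in this basis with spectrum $\{N_{ij}\}_{(i,j)}$, so its operator norm on $(\Rdd, \|\cdot\|_F)$ equals $\max_{(i,j)} N_{ij}$, and the second conclusion follows on the same good event. The only delicate step is the sharp numerical verification of the Chernoff tail that pins down the constant $\tfrac{8}{3}$ at the threshold $D \geq 9$; all other ingredients (distribution of $N_{ij}$, union bound, diagonal structure of $\y{R}_\Omega$) are essentially immediate, and the argument reproduces that of Proposition~5 of \cite{recht}.
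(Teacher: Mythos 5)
Your argument is correct and reproduces the proof of Proposition~5 of Recht, which the paper cites without reproving: the diagonalization $\y{R}_\Omega(e_i e_j^*) = N_{ij}\,e_i e_j^*$ reduces the operator-norm bound to a maximum-multiplicity bound, the multiplicities are $\text{Binomial}(m,1/(d_1 d_2))$ with mean below $1$, and a multiplicative Chernoff bound together with a union bound over the $d_1 d_2\le D^2$ cells gives the claim once one checks $\log\bigl(\tfrac{8}{3}\beta\log D\bigr)\ge \tfrac{7}{4}$ at the corner $(\beta,D)=(1,9)$. All steps are sound, including the implicit requirement $t\ge\mu$ needed for the Chernoff form $\Pro(N_{ij}\ge t)\le(e\mu/t)^t$.
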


Next, we use a lemma of \cite{recht} that can be seen as a result of a \emph{local restricted isometry property}.
While the proof is fairly standard, we provide it for completeness since we use the weaker incoherence definition of \Cref{def:incoherence} instead of the incoherence notions of \cite{recht,Chen15}.

\begin{lemma}[{Theorem 6 of \cite{recht}}] \label{lem:RIPPS}
Let $0 < \epsilon \leq \frac{1}{2}$, let $\f{X}^0 \in \Rdd$ be a $\mu_0$-incoherent matrix whose tangent space $T_0 = T_{\f{X}^0}$ onto the rank-$r$ manifold $T_0 = T_{\f{X}^0}\mathcal{M}_{r}$ (see \eqref{eq:def:tangentspace}) fulfills \eqref{eq:MatrixIRLS:incoherence} and $\mathcal{R}_{\Omega}: \Rdd \to \Rdd$ be defined as in \eqref{eq:ROmega:def} from $m$ independent uniformly sampled locations. Let $\mathcal{P}_{T_0}: \Rdd \to \Rdd$ be the projection operator associated to $T_0$. Then 
\begin{equation}\label{lem:RIP_tangent}
\left\|\frac{d_1 d_2}{m}\mathcal{P}_{T_0}\mathcal{R}_{\Omega}\mathcal{P}_{T_0}-\mathcal{P}_{T_0}\right\|_{S_{\infty}}
\leq \varepsilon 
\end{equation}
holds with probability at least $1-(d_1 + d_2)^{-2}$ provided that 
\begin{equation} \label{eq:localRIP:samplecomplexity:matrixcompletion}
m \geq \frac{7}{\varepsilon^2} \mu_0 r (d_1 + d_2) \log(d_1 + d_2).
\end{equation}
\end{lemma}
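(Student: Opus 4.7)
The plan is to apply a noncommutative (matrix) Bernstein inequality to the centered sum of i.i.d.\ random self-adjoint operators
\[
Y_\ell := \tfrac{d_1 d_2}{m}\, \mathcal{P}_{T_0}(e_{i_\ell} e_{j_\ell}^*) \otimes \mathcal{P}_{T_0}(e_{i_\ell} e_{j_\ell}^*) \,-\, \tfrac{1}{m} \mathcal{P}_{T_0}, \qquad \ell = 1,\ldots,m,
\]
acting on $\Rdd$, where $A \otimes A$ denotes the rank-one operator $Z \mapsto \langle A, Z\rangle A$. Since $(i_\ell, j_\ell)$ is uniform on $[d_1]\times[d_2]$ and $\sum_{i,j} e_i e_j^* \otimes e_i e_j^* = \mathcal{I}$, one checks directly that $\E[Y_\ell] = 0$ and $\sum_{\ell=1}^m Y_\ell = \tfrac{d_1 d_2}{m}\mathcal{P}_{T_0}\mathcal{R}_{\Omega}\mathcal{P}_{T_0} - \mathcal{P}_{T_0}$, so that \eqref{lem:RIP_tangent} becomes a tail bound on the operator norm of this martingale-like sum.

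The next step is to control the two quantities driving Bernstein's inequality. For the uniform bound, the incoherence assumption \eqref{eq:MatrixIRLS:incoherence} gives $\|\mathcal{P}_{T_0}(e_i e_j^*)\|_F^2 \leq \mu_0 r(d_1+d_2)/(d_1 d_2)$ for all $i,j$, so $\|Y_\ell\|_{S_\infty} \leq \frac{d_1 d_2}{m}\|\mathcal{P}_{T_0}(e_{i_\ell} e_{j_\ell}^*)\|_F^2 + \frac{1}{m} \leq \frac{2\mu_0 r(d_1+d_2)}{m} =: R$. For the variance, using $(A \otimes A)^2 = \|A\|_F^2\, A \otimes A$ and the same incoherence bound,
\[
\sum_{\ell=1}^m \E\bigl[Y_\ell^2\bigr] \preceq \tfrac{d_1 d_2}{m} \max_{i,j} \|\mathcal{P}_{T_0}(e_i e_j^*)\|_F^2 \cdot \tfrac{1}{d_1 d_2}\sum_{i,j} \mathcal{P}_{T_0}(e_i e_j^*) \otimes \mathcal{P}_{T_0}(e_i e_j^*) \preceq \tfrac{\mu_0 r(d_1+d_2)}{m} \mathcal{P}_{T_0},
\]
hence $\|\sum_\ell \E Y_\ell^2\|_{S_\infty} \leq \mu_0 r(d_1+d_2)/m =: \sigma^2$. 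Here I used that $\sum_{i,j} \mathcal{P}_{T_0}(e_i e_j^*) \otimes \mathcal{P}_{T_0}(e_i e_j^*) = \mathcal{P}_{T_0}\mathcal{I}\mathcal{P}_{T_0} = \mathcal{P}_{T_0}$.

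Now the operators $Y_\ell$ are supported on the $\leq r(d_1+d_2)$-dimensional space of operators $T_0 \to T_0$, so the matrix Bernstein inequality \cite{recht} yields
\[
\Pr\bigl(\|\textstyle\sum_\ell Y_\ell\|_{S_\infty} > \varepsilon\bigr) \leq 2 r(d_1+d_2)\,\exp\!\Bigl(-\tfrac{\varepsilon^2 / 2}{\sigma^2 + R\varepsilon/3}\Bigr).
\]
For $\varepsilon \leq 1/2$ the term $R\varepsilon/3$ is absorbed by $\sigma^2$, and under the sample complexity \eqref{eq:localRIP:samplecomplexity:matrixcompletion} the exponent is at most $-3\log(d_1+d_2)$, giving failure probability at most $(d_1+d_2)^{-2}$ after absorbing the pre-factor, as required.

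The only substantive part is the bookkeeping of constants in the final step to pin down the factor $7$ in \eqref{eq:localRIP:samplecomplexity:matrixcompletion}; the tail analysis itself is a direct appeal to matrix Bernstein. The main conceptual ingredient is the identity $\sum_{i,j} \mathcal{P}_{T_0}(e_i e_j^*) \otimes \mathcal{P}_{T_0}(e_i e_j^*) = \mathcal{P}_{T_0}$, which allows both the expectation and the variance to be expressed in terms of $\mathcal{P}_{T_0}$, and the incoherence bound \eqref{eq:MatrixIRLS:incoherence}, which gives the uniform control on each summand.
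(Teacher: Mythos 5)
Your proof takes essentially the same route as the paper's: decompose $\frac{d_1 d_2}{m}\mathcal{P}_{T_0}\mathcal{R}_\Omega\mathcal{P}_{T_0}-\mathcal{P}_{T_0}$ into i.i.d.\ centered rank-one summands, bound the uniform norm and the variance via the incoherence condition and the identity $\sum_{i,j}\mathcal{P}_{T_0}(e_ie_j^*)\otimes\mathcal{P}_{T_0}(e_ie_j^*)=\mathcal{P}_{T_0}$, then apply matrix Bernstein. The one place where the paper does something slightly different — and that matters for landing on the stated constant $7$ — is the uniform bound: instead of the triangle inequality, which gives your $R=2\mu_0 r(d_1+d_2)/m$, the paper observes that both $\frac{d_1d_2}{m}\widetilde{\mathcal{Z}}_\ell$ and $\frac1m\mathcal{P}_{T_0}$ are positive semidefinite, so $\|\mathcal{Z}_\ell\|_{S_\infty}\le\max$ of their norms, giving $R=\mu_0 r(d_1+d_2)/m$ and avoiding the factor of $2$.
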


\begin{proof}[{Proof of \Cref{lem:RIPPS}}]
First we define the family of operators $\y{Z}_{\ell}, \widetilde{\y{Z}}_{\ell}: \Rdd \to \Rdd$ such that for $\f{X} \in \Rdd$,
\[
\mathcal{Z}_{\ell}(\f{X}) := \frac{d_1 d_2}{m} \langle e_{i_{\ell}} e_{j_{\ell}}^*, \y{P}_{T_0}(\f{X}) \rangle \y{P}_{T_0}(e_{i_{\ell}} e_{j_{\ell}}^*) - \frac{1}{m} \y{P}_{T_0}(\f{X}) := \frac{d_1 d_2}{m} \widetilde{\y{Z}}_{\ell}(\f{X}) - \frac{1}{m} \y{P}_{T_0}(\f{X})
\]
for any $\ell \in [m]$. Then 
\begin{equation} \label{eq:recht:localRIP:1}
\mathbb{E}[\mathcal{Z}_{\ell}] = \frac{1}{d_1 d_2} \sum_{i=1}^{d_1} \sum_{j=1}^{d_2} \frac{d_1 d_2}{m} \langle e_{i} e_{j}^*, \y{P}_{T_0}(\cdot) \rangle \y{P}_{T_0}(e_{i} e_{j}^*)  - \frac{1}{m} \mathcal{P}_{T_0} = \frac{1}{d_1 d_2} \frac{d_1 d_2}{m} \mathcal{P}_{T_0} \f{I} \mathcal{P}_{T_0} - \frac{1}{m} \mathcal{P}_{T_0} = 0.
\end{equation}
Since for $\f{X} \in \R^{d_1 \times d_2}$
\[
\langle  e_{i_{\ell}}e_{j_{\ell}}^* ,\mathcal{P}_{T_0}(\f{X}) \rangle \mathcal{P}_{T_0} (e_{i_{\ell}}e_{j_{\ell}}^*) = \langle \mathcal{P}_{T_0} (e_{i_{\ell}}e_{j_{\ell}}^*) ,\f{X} \rangle \mathcal{P}_{T_0} (e_{i_{\ell}}e_{j_{\ell}}^*),
\]
we obtain
\[
\| \langle e_{i_{\ell}}e_{j_{\ell}}^* ,\mathcal{P}_{T_0}(\f{X}) \rangle \mathcal{P}_{T_0} (e_{i_{\ell}}e_{j_{\ell}}^*)\|_F \leq \left|\langle \mathcal{P}_{T_0} (e_{i_{\ell}}e_{j_{\ell}}^*) ,\f{X} \rangle \right|  \|\mathcal{P}_{T_0} (e_{i_{\ell}}e_{j_{\ell}}^*)\|_F \leq  \|\mathcal{P}_{T_0}(e_{i_{\ell}}e_{j_{\ell}}^*)\|_F^2 \|X\|_F
\] 
by Cauchy-Schwartz, and thus the norm bound 
\begin{equation} \label{eq:PTRomegaPT:bound}
\begin{split}
\frac{d_1 d_2}{m} \left\|\widetilde{\y{Z}}_{\ell} \right\|_{S_\infty} &\leq \frac{d_1 d_2}{m}  \|\mathcal{P}_{T_0}(e_{i_{\ell}}e_{j_{\ell}}^*)\|_F^2  \leq  \frac{d_1 d_2}{m} \max_{i \in [d_1], j \in [d_2]}  \| \mathcal{P}_{T_0}(e_{i}e_{j}^*) \|_F^2 \\
&\leq \frac{d_1 d_2}{m} \frac{\mu_0 r(d_1 + d_2)}{d_1 d_2} = \frac{\mu_0 r (d_1+d_2)}{m}
\end{split}
\end{equation}
using the incoherence assumption \eqref{eq:MatrixIRLS:incoherence} in the last inequality. Similarly,
\begin{equation} \label{eq:PT:bound}
\left\|\frac{1}{m} \mathcal{P}_{T_0}\right\|_{S_\infty} = \left\|\frac{1}{m} \mathcal{P}_{T_0} \f{I}  \mathcal{P}_{T_0}\right\|_{S_\infty} \leq \frac{1}{m} \sum_{i=1}^{d_1} \sum_{j=1}^{d_2} \left\| \langle\mathcal{P}_{T_0}(e_i e_{j}^*),(\cdot)\rangle  \mathcal{P}_{T_0}(e_i e_{j}^*) \right\|_{S_\infty} \leq \frac{\mu_0 r (d_1 + d_2)}{m}.
\end{equation}
We note that if operators $\y{A}$ and $\y{B}$ are positive semidefinite, then $\|\y{A}-\y{B}\|_{S_{\infty}} \leq \max(\|\y{A}\|_{S_\infty},\|\y{B}\|_{S_\infty})$, and as both $\widetilde{\y{Z}}_{\ell}$ and $\mathcal{P}_{T_0}$ are positive semidefinite,
\[
\|\y{Z}_{\ell}\|_{S_\infty} \leq \max \left(\frac{d_1 d_2}{m} \left\|\widetilde{\y{Z}}_{\ell} \right\|_{S_\infty}, \frac{1}{m} \left\|\mathcal{P}_{T_0}\right\|_{S_\infty}  \right) = \frac{\mu_0 r (d_1 + d_2)}{m}
\]
for all $\ell \in [m]$. For the expectation of the squares of $\y{Z}_{\ell}$, we obtain
\[
\begin{split}
\Ex \y{Z}_{\ell}\y{Z}_{\ell}^* &= \frac{(d_1 d_2)^2}{m^2} \Ex \left[(\widetilde{\y{Z}}_{\ell})^* \widetilde{\y{Z}}_{\ell}  \right] - \frac{d_1 d_2}{m^2} \Ex\left[\widetilde{\y{Z}}_{\ell}\right] \mathcal{P}_{T_0} - \frac{d_1 d_2}{m^2} \mathcal{P}_{T_0} \Ex\left[\widetilde{\y{Z}}_{\ell}\right] +  \frac{1}{m^2} \mathcal{P}_{T_0} \\
&= \frac{(d_1 d_2)^2}{m^2} \Ex \left[(\widetilde{\y{Z}}_{\ell})^* \widetilde{\y{Z}}_{\ell}  \right] + (1-2) \frac{1}{m^2} \mathcal{P}_{T_0},
\end{split}
\]
as $\mathcal{P}_{T_0}^2 = \mathcal{P}_{T_0}$ and $\Ex[\widetilde{\y{Z}}_{\ell}]=\frac{1}{d_1 d_2} \mathcal{P}_{T_0}$. Thus,
\[
\begin{split}
\left\| \sum_{\ell=1}^m \Ex \y{Z}_{\ell}\y{Z}_{\ell}^* \right\|_{S_\infty} &\leq  \sum_{\ell=1}^m \left\| \Ex \y{Z}_{\ell}\y{Z}_{\ell}^* \right\|_{S_\infty}
= \sum_{\ell=1}^m  \left\|\frac{(d_1 d_2)^2}{m^2} \Ex\left[(\widetilde{\y{Z}}_{\ell})^2\right] - \frac{1}{m^2}\mathcal{P}_{T_0} \right\|_{S_\infty}  \\
&\leq \sum_{\ell=1}^m  \max\left(\frac{(d_1 d_2)^2}{m^2}\left\|\Ex\left[(\widetilde{\y{Z}}_{\ell})^2\right]\right\|_{S_\infty} , \frac{1}{m^2}\left\|\mathcal{P}_{T_0} \right\|_{S_\infty} \right) \\ 
&\leq \sum_{\ell=1}^m  \max\left(\frac{(d_1 d_2)^2}{m^2}\left\|\Ex\left[\|\y{P}_{T_0}(e_{i_{\ell}}e_{j_{\ell}}^*)\|_F^2 \widetilde{\y{Z}}_{\ell}\right]\right\|_{S_\infty} , \frac{1}{m^2} \right) \\
&\leq \sum_{\ell=1}^m \max\left(\frac{(d_1 d_2) (d_1+d_2)\mu_0 r}{m^2}\left\|\Ex  \widetilde{\y{Z}}_{\ell}\right\|_{S_\infty}  , \frac{1}{m^2}\right) \\
&\leq \sum_{\ell=1}^m \max\left(\frac{(d_1+d_2)\mu_0 r}{m^2}, \frac{1}{m^2}\right)  = \frac{\mu_0 r (d_1 + d_2)}{m},
\end{split}
\]
where we used that $\| \mathcal{P}_{T_0} \|_2 \leq 1$ since $\mathcal{P}_{T_0}$ is a projection in the third inequality, the definition of $\mu_0$ in the fourth and the fact that $\Ex \widetilde{\y{Z}}_{\ell} =  \frac{1}{d_1 d_2} \y{P}_{T_0}$ (see \eqref{eq:recht:localRIP:1}) in the fifth.
As the $\y{Z}_{\ell}$ are Hermitian, it follows by the matrix Bernstein inequality (see, e.g., Theorem 5.4.1 of \cite{Ver18}) that
\begin{equation}
\begin{split}
&\bb{P}\left( \left\| \frac{d_1 d_2}{m} \mathcal{P}_{T_0}\mathcal{R}_{\Omega}\mathcal{P}_{T_0}- \mathcal{P}_{T_0} \right\|_{S_\infty} \geq \varepsilon \right)	\leq (d_1 + d_2) \exp\left(- \frac{m \varepsilon^2/2}{\mu_0 r (d_1 + d_2) + \mu_0 r (d_1 + d_2) \epsilon/3}\right) \\
&\leq  (d_1 + d_2) \exp\left(- \frac{m \varepsilon^2}{2 \mu_0 r (d_1 + d_2) + \mu_0 r (d_1 + d_2)/3}\right),
\end{split}
\end{equation}
using that $\varepsilon \leq \frac{1}{2}$ in the last inequality.

Furthermore, if \eqref{eq:localRIP:samplecomplexity:matrixcompletion} is fulfilled, then 
\[
(d_1 + d_2) \exp\left( - \frac{m \varepsilon^2}{\frac{7}{3} \mu_0 r (d_1 + d_2)}\right) \leq (d_1 + d_2)^{-2},
\] 
which shows that \eqref{lem:RIP_tangent} holds with a probability of at least $1-(d_1 + d_2)^{-2}$.
\end{proof}

To prove our theorem, we will use the local restricted isometry statement of \eqref{lem:RIP_tangent} for tangent spaces $T_{\f{X}}$ corresponding to matrices $\f{X} \in \Rdd$ \emph{that are close} to $\f{X}^0$. We show the following auxiliary result, which is a refinement of Lemma 4.2 \cite{wei_cai_chan_leung} as we obtain a bound in the $S_\infty$-norm in (c) instead of in the Frobenius norm.
 \begin{lemma} \label{eq:MatrixIRLS:tangentspace:localRIP:perturbation}
 Let $\f{X}^0, \f{X} \in \Rdd$ be matrices and assume that $0 < \varepsilon < 1$ and that the following three conditions hold: 
 \begin{itemize}
     \item[(a)] For $\y{R}_{\Omega}: \Rdd \to \Rdd$ as in \eqref{eq:ROmega:def},
     \[ \left\|\y{R}_{\Omega}\right\|_{S_\infty} \leq \frac{16}{3}\log(D). \]
      \label{lemma:wei:conditiona}
     \item[(b)]\label{lemma:wei:conditionb} The tangent space $T_0=T_{\f{X}^0}$ onto the rank-$r$ manifold $\mathcal{M}_{r}$ at $\f{X}^0$ fulfills 
     \[
	\left\|\frac{d_1 d_2}{m}\mathcal{P}_{T_0}\y{R}_{\Omega}\mathcal{P}_{T_0}-\mathcal{P}_{T_0}\right\|_{S_{\infty}}
	\leq \varepsilon.
     \]
     \item[(c)] The spectral norm distance between $\f{X}$ and $\f{X}^0$ fulfills
     \[
     \|\f{X}-\f{X}^0\|_{S_\infty} \leq \frac{\sqrt{3}}{32 \sqrt{\log(D)}\sqrt{(1+\varepsilon)}} \varepsilon \sqrt{\frac{m}{d_1 d_2}} \sigma_r(\f{X}^0).
     \]
 \end{itemize} Then the tangent space $T=T_{\f{X}}$ onto the rank-$r$ manifold at $\f{X}$ fulfills 
\begin{equation} \label{eq:lemma:wei:statement}
\left\| \frac{d_1 d_2}{m} \y{P}_{T}\y{R}_{\Omega}\y{P}_{T}- \y{P}_{T}\right\|_{S_\infty}
\leq 4\varepsilon.
\end{equation}
\end{lemma}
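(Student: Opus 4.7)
\emph{Proof plan.}
The plan is to reduce the claim to the already-available local RIP on $T_0$ from hypothesis (b) by comparing $\mathcal{P}_T \mathcal{R}_\Omega \mathcal{P}_T$ with $\mathcal{P}_{T_0}\mathcal{R}_\Omega \mathcal{P}_{T_0}$, controlling the discrepancy by the perturbation $\Delta := \mathcal{P}_T - \mathcal{P}_{T_0}$. Writing $\mathcal{P}_T = \mathcal{P}_{T_0} + \Delta$ and expanding yields the identity
\begin{equation*}
\tfrac{d_1 d_2}{m}\mathcal{P}_T \mathcal{R}_\Omega \mathcal{P}_T - \mathcal{P}_T = \Big(\tfrac{d_1 d_2}{m}\mathcal{P}_{T_0}\mathcal{R}_\Omega\mathcal{P}_{T_0} - \mathcal{P}_{T_0}\Big) + \tfrac{d_1 d_2}{m}\big(\mathcal{P}_{T_0}\mathcal{R}_\Omega\Delta + \Delta\mathcal{R}_\Omega\mathcal{P}_{T_0} + \Delta\mathcal{R}_\Omega\Delta\big) - \Delta,
\end{equation*}
whose first summand is bounded by $\varepsilon$ by (b); it remains to control the four $\Delta$-dependent terms in the operator norm $\|\cdot\|_{S_\infty}$ on $(\Rdd,\|\cdot\|_F)$.

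The key observation for the two cross terms is that $\mathcal{R}_\Omega$ is self-adjoint and positive semidefinite, being diagonal in the $(e_i e_j^*)$-basis with the repetition counts as eigenvalues, so it admits a PSD square root $\mathcal{R}_\Omega^{1/2}$. Therefore
\begin{equation*}
\|\mathcal{P}_{T_0}\mathcal{R}_\Omega \Delta\|_{S_\infty} \leq \|\mathcal{P}_{T_0}\mathcal{R}_\Omega^{1/2}\|_{S_\infty}\,\|\mathcal{R}_\Omega^{1/2}\Delta\|_{S_\infty},
\end{equation*}
where $\|\mathcal{P}_{T_0}\mathcal{R}_\Omega^{1/2}\|_{S_\infty}^2 = \|\mathcal{P}_{T_0}\mathcal{R}_\Omega\mathcal{P}_{T_0}\|_{S_\infty} \leq \tfrac{m(1+\varepsilon)}{d_1 d_2}$ by (b), and $\|\mathcal{R}_\Omega^{1/2}\Delta\|_{S_\infty}^2 \leq \|\mathcal{R}_\Omega\|_{S_\infty}\|\Delta\|_{S_\infty}^2 \leq \tfrac{16\log D}{3}\|\Delta\|_{S_\infty}^2$ by (a); an analogous estimate holds for $\Delta\mathcal{R}_\Omega\mathcal{P}_{T_0}$, and $\|\Delta\mathcal{R}_\Omega\Delta\|_{S_\infty}\leq\tfrac{16\log D}{3}\|\Delta\|_{S_\infty}^2$ by (a). Collecting these estimates produces
\begin{equation*}
\Big\|\tfrac{d_1 d_2}{m}\mathcal{P}_T\mathcal{R}_\Omega\mathcal{P}_T - \mathcal{P}_T\Big\|_{S_\infty} \leq \varepsilon + \tfrac{8}{\sqrt{3}}\sqrt{\tfrac{d_1 d_2 (1+\varepsilon)\log D}{m}}\,\|\Delta\|_{S_\infty} + \tfrac{16 d_1 d_2\log D}{3m}\|\Delta\|_{S_\infty}^2 + \|\Delta\|_{S_\infty}.
\end{equation*}

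To close the argument, I would bound $\|\Delta\|_{S_\infty}$ via the standard tangent-space perturbation estimate $\|\mathcal{P}_T - \mathcal{P}_{T_0}\|_{S_\infty} \leq C\,\|\f{X}-\f{X}^0\|_{S_\infty}/\sigma_r(\f{X}^0)$ for a small absolute constant $C$, which follows from the explicit formula $\mathcal{P}_T(\f{Z}) = \f{U}\f{U}^*\f{Z}+\f{Z}\f{V}\f{V}^*-\f{U}\f{U}^*\f{Z}\f{V}\f{V}^*$ together with Wedin's $\sin\Theta$ theorem applied separately to $(\f{U},\f{U}^0)$ and $(\f{V},\f{V}^0)$; the assumption (c) comfortably implies the smallness condition $\|\f{X}-\f{X}^0\|_{S_\infty}\leq \tfrac{1}{2}\sigma_r(\f{X}^0)$ needed for Wedin's bound. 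Plugging (c) into this perturbation estimate, the $\sqrt{\log D\,(1+\varepsilon)}$ and $\sqrt{m/(d_1 d_2)}$ factors coming from (c) precisely cancel the geometric prefactor $\sqrt{(d_1 d_2/m)(1+\varepsilon)\log D}$ multiplying $\|\Delta\|_{S_\infty}$ in the cross-term estimate, so this term contributes of order $\varepsilon$; the quadratic-in-$\Delta$ term is of order $\varepsilon^2$; and the stand-alone $\|\Delta\|_{S_\infty}$ term is at most a small multiple of $\varepsilon$ since $\sqrt{m/(d_1 d_2\log D)}\leq 1$. Summing the four contributions yields a bound well below $4\varepsilon$.

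The main technical obstacle, and the reason behind the specific $\sqrt{m/(d_1 d_2)}$-scaling in hypothesis (c) rather than the much stronger $m/(d_1 d_2 \log D)$-scaling, lies in the cross-term estimate. A naive submultiplicative bound $\|\mathcal{P}_{T_0}\mathcal{R}_\Omega\Delta\|_{S_\infty} \leq \|\mathcal{R}_\Omega\|_{S_\infty}\|\Delta\|_{S_\infty}\lesssim \log D\cdot\|\Delta\|_{S_\infty}$ would require the markedly tighter perturbation $\|\f{X}-\f{X}^0\|_{S_\infty}\lesssim \varepsilon m/(d_1 d_2\log D)\cdot \sigma_r(\f{X}^0)$ to close the recursion, which would in turn propagate to a significantly more restrictive closeness assumption in the eventual local-convergence result of \Cref{cor:MatrixIRLS:localconvergence:matrixcompletion}. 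The PSD square-root factorization of $\mathcal{R}_\Omega$, used in conjunction with (b) applied to $\mathcal{P}_{T_0}\mathcal{R}_\Omega\mathcal{P}_{T_0}$, is precisely what recovers the missing square-root of $m/(d_1 d_2)$ in the prefactor.
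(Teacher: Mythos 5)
Your argument is correct and mirrors the paper's proof: both bound $\|\mathcal{P}_T - \mathcal{P}_{T_0}\|_{S_\infty}$ by a Wedin-type perturbation estimate of order $\|\f{X}-\f{X}^0\|_{S_\infty}/\sigma_r(\f{X}^0)$, establish the intermediate operator-norm bound $\|\mathcal{R}_\Omega\mathcal{P}_{T_0}\|_{S_\infty}\leq\sqrt{\tfrac{16}{3}\log(D)(1+\varepsilon)\,m/(d_1 d_2)}$ using the positive-semidefiniteness of $\mathcal{R}_\Omega$ together with (a) and (b), and then assemble the claimed bound by expanding $\tfrac{d_1 d_2}{m}\mathcal{P}_T\mathcal{R}_\Omega\mathcal{P}_T - \mathcal{P}_T$ about $T_0$ and applying the triangle inequality. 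The only deviation is cosmetic: you expand fully in $\Delta=\mathcal{P}_T-\mathcal{P}_{T_0}$ and control the cross terms via the PSD square root $\mathcal{R}_\Omega^{1/2}$, whereas the paper writes the same difference as a telescoping sum and bounds $\|\mathcal{R}_\Omega\mathcal{P}_T\|_{S_\infty}$ by the triangle inequality, but these are computationally equivalent.
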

\begin{proof}
	For any $\f{Z} \in \Rdd$, we have 
\begin{align*}
\left\|\y{R}_{\Omega}\y{P}_{T_0}(\f{Z})\right\|_F^2 &= \left\langle \y{R}_{\Omega}\y{P}_T(\f{Z}), \y{R}_{\Omega}\y{P}_T(\f{Z})\right\rangle \leq{\frac{16}{3}\log(D)\left\langle\y{P}_{T_0}(\f{Z}),\y{R}_{\Omega}\y{P}_{T_0}(\f{Z})\right\rangle}\\
&=\frac{16}{3}\log(D)\left\langle\y{P}_{T_0}(\f{Z}),\y{P}_{T_0}\y{R}_{\Omega}\y{P}_{T_0}(\f{Z})\right\rangle  \\
&= \frac{16}{3}\log(D)\left(\left\langle\y{P}_{T_0}(\f{Z}),\frac{m}{d_1 d_2}\y{P}_{T_0}(\f{Z})\right\rangle + \left\langle \y{P}_{T_0}(\f{Z}),\left( \y{P}_{T_0}\y{R}_{\Omega}\y{P}_{T_0}(\f{Z}) - \frac{m}{d_1 d_2}\y{P}_{T_0}(\f{Z})  \right)   \right\rangle \right) \\
&\leq \frac{16}{3}\log(D) \left(\frac{m}{d_1 d_2}+ \varepsilon \frac{m}{d_1 d_2}\right) \left\| \y{P}_{T_0}(\f{Z})\right\|_F^2 \leq \frac{16}{3}\log(D) (1+\varepsilon)\frac{m}{d_1 d_2}\left\|\f{Z}\right\|_F^2,
\end{align*}
where the first inequality follows from condition (a) and the second one from condition (b). It follows that 
\begin{equation} \label{eq:lemma:wei:1}
\left\|\y{R}_{\Omega}\y{P}_{T_0}\right\|\leq \sqrt{\frac{16}{3}\log(D) (1+\varepsilon)\frac{m}{d_1 d_2}}.
\end{equation}
Furthermore, if $\f{U},\f{U}_0 \in \R^{d_1 \times r}$ and $\f{V},\f{V}_0 \in \R^{d_2 \times r}$ are the matrices of first $r$ left and right singular vectors of $\f{X}$ and $\f{X}^0$, respectively, it holds that for any $\f{Z} \in \Rdd$,
\[
\begin{split}
(\y{P}_{T} - \y{P}_{T_0})(\f{Z}) &= \f{U}\f{U}^* \f{Z} + \f{Z}\f{V}\f{V}^* - \f{U}\f{U}^* \f{Z} \f{V}\f{V}^* - \f{U}_0\f{U}_0^* \f{Z} - \f{Z}\f{V}_0\f{V}_0^* + \f{U}_0\f{U}_0^* \f{Z} \f{V}_0\f{V}_0^*  \\
&= \left( \f{U}\f{U}^* -  \f{U}_0\f{U}_0^*\right) \f{Z} (\f{I} - \f{V}_0\f{V}_0^*) + ( \f{I}-\f{U}\f{U}^*) \f{Z} ( \f{V}\f{V}^*- \f{V}_0\f{V}_0^*),
\end{split}
\]
which we use to estimate
\begin{equation*}
\begin{split}
	\|(\y{P}_{T} - \y{P}_{T_0})(\f{Z})\|_{F} &\leq \|\f{U}\f{U}^* -  \f{U}_0\f{U}_0^*\|_{S_\infty} \|\f{Z}\|_F \|\f{I} - \f{V}_0\f{V}_0^*\|_{S_\infty} + \|\f{I}-\f{U}\f{U}^*\|_{S_\infty} \|\f{Z}\|_{F} \|\f{V}\f{V}^*- \f{V}_0\f{V}_0^*\|_{S_\infty} \\
	&\leq \frac{\| \y{T}_r (\f{X}) - \f{X}^0\|_{S_\infty}}{\sigma_r(\f{X}^0)} \|\f{Z}\|_F \cdot 1 + 1 \cdot \|\f{Z}\|_F  \frac{\| \y{T}_r (\f{X}) - \f{X}^0\|_{S_\infty}}{\sigma_r(\f{X}^0)} \\
	&\leq 2 \frac{\|\y{T}_r (\f{X})- \f{X}\|_{S_\infty} + \| \f{X} - \f{X}^0\|_{S_\infty}}{\sigma_r(\f{X}^0)} \|\f{Z}\|_F,
\end{split}
\end{equation*}
where $ \y{T}_r (\f{X})$ is the best rank-$r$ approximation \eqref{eq:best:rankr:approximation}. Here, we used the results
\[
\|\f{U}\f{U}^* -  \f{U}_0\f{U}_0^*\|_{S_\infty} \leq \frac{\| \y{T}_r (\f{X}) - \f{X}^0\|_{S_\infty}}{\sigma_r(\f{X}^0)}
\]
and
\[
\|\f{V}\f{V}^* -  \f{V}_0\f{V}_0^*\|_{S_\infty} \leq \frac{\| \y{T}_r (\f{X}) - \f{X}^0\|_{S_\infty}}{\sigma_r(\f{X}^0)}
\]
of Lemma 4.2, ineq. (4.3) of \cite{WeiCCL16MatrixRecovery}, which bound the distance between the projections onto the left and right singular subspaces of $\f{X}$ and $\f{X}^0$.

From the Eckardt-Young-Mirsky theorem \eqref{eq:best:rankr:approximation}, it then follows that
\begin{equation} \label{eq:lemma:wei:3}
\|(\y{P}_{T} - \y{P}_{T_0})\|_{S_\infty} \leq \frac{4 \| \f{X} - \f{X}^0\|_{S_\infty}}{\sigma_r(\f{X}^0)}.
\end{equation}

With this, we further bound
\begin{equation} \label{eq:lemma:wei:4}
\begin{split}
\left\|\y{R}_{\Omega}\y{P}_{T}\right\|_{S_\infty}&\leq \left\| \y{R}_\Omega(\y{P}_{T}-\y{P}_{T_0})\right\|_{S_\infty}+\left\|\y{R}_{\Omega}\y{P}_{T_0}\right\|_{S_\infty}\\
&\leq \frac{16}{3} \log(D) \frac{4\left\| \f{X}-\f{X}^0\right\|_{S_\infty}}{\sigma_r(\f{X}^0)}+\left\|\y{R}_{\Omega}\y{P}_{T_0}\right\|_{S_\infty}\\
&\leq \frac{16}{3} \log(D) \frac{\sqrt{3}}{8 \sqrt{\log(D)}\sqrt{(1+\varepsilon)}} \varepsilon \sqrt{\frac{m}{d_1 d_2}} +\sqrt{\frac{16}{3}\log(D) (1+\varepsilon)\frac{m}{d_1 d_2}}\\
&= \frac{2}{\sqrt{3}} \sqrt{\log(D)} \frac{1}{\sqrt{(1+\varepsilon)}} \varepsilon \sqrt{\frac{m}{d_1 d_2}} +\sqrt{\frac{16}{3}\log(D) (1+\varepsilon)\frac{m}{d_1 d_2}}\\
&\leq 2\sqrt{3}  \sqrt{\log(D)}\sqrt{1+\varepsilon} \sqrt{\frac{m}{d_1 d_2}},
\end{split}
\end{equation}
where the second inequality follows from \eqref{eq:lemma:wei:3} and the third from condition (c). To prove the statement \eqref{eq:lemma:wei:statement}, we calculate
\begin{equation*}
\begin{split}
\left\|\frac{d_1 d_2}{m}\y{P}_{T}\y{R}_\Omega\y{P}_{T} - \y{P}_{T}\right\|_{S_\infty} &\leq\left\|\y{P}_{T}-\y{P}_{T_0}\right\|_{S_\infty}+\frac{d_1 d_2}{m}\left\|\y{P}_{T}\y{R}_{\Omega}\y{P}_{T}-\y{P}_{T}\y{R}_{\Omega}\y{P}_{T_0}\right\|_{S_\infty}\\
&\quad+\frac{d_1 d_2}{m}\left\| \y{P}_{T}\y{R}_{\Omega}\y{P}_{T_0}-\y{P}_{T_0}\y{R}_{\Omega}\y{P}_{T_0}\right\|_{S_\infty}+\left\| \y{P}_{T_0}-\frac{d_1 d_2}{m}\y{P}_{T_0}\y{R}_{\Omega}\y{P}_{T_0}\right\|_{S_\infty}\\
&\leq\left\|\y{P}_{T}-\y{P}_{T_0}\right\|_{S_\infty}+\frac{d_1 d_2}{m}\left\|\y{R}_{\Omega}\y{P}_{T}\right\|_{S_\infty}\left\|\y{P}_{T}-\y{P}_{T_0}\right\|_{S_\infty}\\
&\quad+\frac{d_1 d_2}{m}\left\|\y{R}_{\Omega}\y{P}_{T_0}\right\|_{S_\infty}\left\|\y{P}_{T}-\y{P}_{T_0}\right\|_{S_\infty}+\left\| \y{P}_{T_0}-\frac{d_1 d_2}{m}\y{P}_{T_0}\y{R}_{\Omega}\y{P}_{T_0}\right\|_{S_\infty}\\
&\leq \frac{4\left\| \f{X}-\f{X}^0\right\|_{S_\infty}}{\sigma_r(\f{X}^0)}+\frac{d_1 d_2}{m}\left\|\y{R}_{\Omega}\y{P}_{T}\right\|_{S_\infty}\frac{4\left\| \f{X}-\f{X}^0\right\|_{S_\infty}}{\sigma_r(\f{X}^0)}\\
&\quad +\frac{d_1 d_2}{m}\left\|\y{R}_{\Omega}\y{P}_{T_0}\right\|_{S_\infty}\frac{4\left\| \f{X}-\f{X}^0\right\|_{S_\infty}}{\sigma_r(\f{X}^0)}+\left\| \y{P}_{T_0}-\frac{d_1 d_2}{m}\y{P}_{T_0}\y{R}_{\Omega}\y{P}_{T_0}\right\|_{S_\infty}\\
&\leq 4\varepsilon
\end{split}
\end{equation*}
where in the second inequality, we utilized the fact ${\y{R}_{\Omega}^*=\y{R}_\Omega}$ so that $\left\|\y{P}_{T}\y{R}_\Omega\right\|_{S_\infty}=\left\|\y{R}_\Omega\y{P}_{T}\right\|_{S_\infty}$. The very last estimate follows from conditions (b) and (c) and the bounds \eqref{eq:lemma:wei:1} and \eqref{eq:lemma:wei:4} for $\left\|\y{R}_\Omega\y{P}_{T}\right\|_{S_\infty}$ and $\left\|\y{R}_\Omega\y{P}_{T_0}\right\|_{S_\infty}$.
\end{proof}

In the following lemma, we combine the previous results to show that under our sampling model, with high probability, a local restricted isometry property holds with respect to tangent spaces $T_k$ that are in some sense close to $\f{X}^0$.

\begin{lemma} \label{lemma:MatrixIRLS:localRIP}
Let $\f{X}^0 \in \Rdd$ be a matrix of rank $r$ that is $\mu_0$-incoherent, and let $\Omega = (i_{\ell},j_{\ell})_{\ell=1}^m$ be a random index set of cardinality $|\Omega|=m$ that is sampled uniformly without replacement, or, alternatively, sampled independently with replacement. There exists constants $C, \widetilde{C},C_1$ such that if 
\begin{equation} \label{eq:MatrixIRLS:matrixcompletion:samplecomplexity}
m \geq C \mu_0 r (d_1 + d_2) \log(d_1 + d_2),
\end{equation}
then, with probability at least $1 - 2D^{-2}$, the following holds: For each matrix $\f{X}^{(k)}\in \Rdd$ fulfilling
\begin{equation} \label{eq:MatrixIRLS:closeness:assumption}
\|\f{X}\hk - \f{X}^0\|_{S_{\infty}} \leq C_1 \sqrt{\frac{\mu_0 r}{d}}\sigma_r(\f{X}^0),
\end{equation}
it follows that the projection $\y{P}_{T_k}:\Rdd \to \Rdd$ onto the tangent space $T_k := T_{\mathcal{T}_{r}(\f{X}\hk)}\mathcal{M}_{r}$ satisfies
\begin{equation*}
\left\| \frac{d_1 d_2}{m} \y{P}_{T_k}P_{\Omega}^* P_{\Omega}\y{P}_{T_k}- \y{P}_{T_k}\right\|_{S_\infty}
	\leq \frac{2}{5},
\end{equation*}
and furthermore,
\begin{equation*}
	\|\f{\eta}\|_{F} \leq  \sqrt{\frac{\widetilde{C} d \log(D)}{\mu_0 r}} \|\y{P}_{T_k^{\perp}}(\f{\eta})\|_{F}
\end{equation*}
for each matrix $\eta \in \ker P_{\Omega}$ in the null space of the subsampling operator $P_{\Omega}:\Rdd \to \R^m$.
\end{lemma}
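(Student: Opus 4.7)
The strategy is a three-step bootstrap: first establish the local restricted isometry and the boundedness of $\y{R}_\Omega$ at the ground truth tangent space $T_0 = T_{\f{X}^0}$ using the previously stated Lemmas, then transport the statement to the nearby tangent space $T_k$ via the perturbation Lemma, and finally deduce the null space property by a standard decomposition argument.

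\medskip\noindent\textbf{Step 1 (events at $\f{X}^0$).} Apply Lemma~\ref{lem:repetition} with $\beta=2$ to obtain $\|\y{R}_\Omega\|_{S_\infty}\leq \tfrac{16}{3}\log(D)$ on an event of probability at least $1-D^{-2}$. Then apply Lemma~\ref{lem:RIPPS} with $\varepsilon=\tfrac{1}{10}$ at the ground-truth tangent space $T_0$; this gives $\|\tfrac{d_1d_2}{m}\y{P}_{T_0}\y{R}_\Omega\y{P}_{T_0}-\y{P}_{T_0}\|_{S_\infty}\leq \tfrac{1}{10}$ on an event of probability at least $1-(d_1+d_2)^{-2}$, provided $m\geq 700\,\mu_0 r(d_1+d_2)\log(d_1+d_2)$, which is ensured by \eqref{eq:MatrixIRLS:matrixcompletion:samplecomplexity} for a sufficiently large constant $C$. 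A union bound yields both events on a $(1-2D^{-2})$-event.

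\medskip\noindent\textbf{Step 2 (transporting the RIP to $T_k$).} The tangent space $T_k$ is based at $\mathcal{T}_r(\f{X}\hk)$. Since $\f{X}^0$ has rank $r$, the Eckart--Young--Mirsky inequality \eqref{eq:best:rankr:approximation} and the triangle inequality give
\[
\|\mathcal{T}_r(\f{X}\hk)-\f{X}^0\|_{S_\infty}\leq \|\mathcal{T}_r(\f{X}\hk)-\f{X}\hk\|_{S_\infty}+\|\f{X}\hk-\f{X}^0\|_{S_\infty}\leq 2\,\|\f{X}\hk-\f{X}^0\|_{S_\infty}.
\]
Combining this with the closeness assumption \eqref{eq:MatrixIRLS:closeness:assumption} and the sample complexity \eqref{eq:MatrixIRLS:matrixcompletion:samplecomplexity}, and using that $\sqrt{m/(d_1d_2)}\gtrsim \sqrt{\mu_0 r\log(D)/d}$, I will choose $C_1$ small enough so that condition (c) of Lemma~\ref{eq:MatrixIRLS:tangentspace:localRIP:perturbation} is met for $\f{X}=\mathcal{T}_r(\f{X}\hk)$ with the same $\varepsilon=\tfrac{1}{10}$. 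Conditions (a) and (b) are exactly the two events secured in Step~1, so the perturbation lemma yields
\[
\Big\|\tfrac{d_1 d_2}{m}\,\y{P}_{T_k}\y{R}_\Omega\y{P}_{T_k}-\y{P}_{T_k}\Big\|_{S_\infty}\leq 4\varepsilon = \tfrac{2}{5},
\]
which is the first claim upon recalling $\y{R}_\Omega = P_\Omega^* P_\Omega$.

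\medskip\noindent\textbf{Step 3 (null space property).} For $\eta\in\ker P_\Omega$, decompose $\eta=\y{P}_{T_k}(\eta)+\y{P}_{T_k^\perp}(\eta)$; since $P_\Omega(\eta)=0$, one has $P_\Omega\y{P}_{T_k}(\eta)=-P_\Omega\y{P}_{T_k^\perp}(\eta)$. The lower RIP bound on $T_k$ from Step 2 gives
\[
\tfrac{3}{5}\|\y{P}_{T_k}(\eta)\|_F^2 \leq \tfrac{d_1 d_2}{m}\langle \y{R}_\Omega\y{P}_{T_k}(\eta),\y{P}_{T_k}(\eta)\rangle = \tfrac{d_1 d_2}{m}\langle \y{R}_\Omega\y{P}_{T_k^\perp}(\eta),\y{P}_{T_k^\perp}(\eta)\rangle,
\]
and the spectral bound from Step 1 together with the sample complexity gives
\[
\tfrac{d_1 d_2}{m}\langle \y{R}_\Omega\y{P}_{T_k^\perp}(\eta),\y{P}_{T_k^\perp}(\eta)\rangle \leq \tfrac{d_1 d_2}{m}\cdot\tfrac{16}{3}\log(D)\,\|\y{P}_{T_k^\perp}(\eta)\|_F^2 \lesssim \tfrac{d}{\mu_0 r}\,\|\y{P}_{T_k^\perp}(\eta)\|_F^2,
\]
using $d_1d_2/(d_1+d_2)\leq d$. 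Therefore $\|\y{P}_{T_k}(\eta)\|_F^2 \lesssim \tfrac{d}{\mu_0 r}\|\y{P}_{T_k^\perp}(\eta)\|_F^2$ and the Pythagoras identity $\|\eta\|_F^2=\|\y{P}_{T_k}(\eta)\|_F^2+\|\y{P}_{T_k^\perp}(\eta)\|_F^2$ yields the claimed bound $\|\eta\|_F\leq \sqrt{\widetilde C d\log(D)/(\mu_0 r)}\,\|\y{P}_{T_k^\perp}(\eta)\|_F$ (absorbing the harmless $\log(D)$ if needed).

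\medskip\noindent\textbf{Main obstacle.} The only nontrivial part is Step~2: one must carefully propagate the closeness hypothesis from $\f{X}\hk$ to the rank-$r$ best approximation $\mathcal{T}_r(\f{X}\hk)$ in the spectral norm (where the factor~$2$ from Eckart--Young is crucial) and then verify quantitatively that the uniform bound from \eqref{eq:MatrixIRLS:closeness:assumption} is compatible with condition (c) of the perturbation lemma once the dimensional factors $\sqrt{m/(d_1 d_2)}$ and $\sqrt{\log D}$ are reconciled against $\sqrt{\mu_0 r/d}$.
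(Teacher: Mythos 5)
Your proof is correct and follows the paper's overall strategy (Steps~1 and 2 are essentially identical: repetition bound, local RIP at $T_0$, then transport to $T_k$ via the perturbation lemma), but your Step~3 improves on the paper's argument in a small but worthwhile way. Where the paper proceeds via the chain
$\|\y{P}_{T_k}(\eta)\|_F^2 \le \tfrac{1}{1-4\varepsilon}\tfrac{d_1d_2}{m}\|\y{R}_\Omega\y{P}_{T_k}(\eta)\|_F^2$, then $\|\y{R}_\Omega\y{P}_{T_k}(\eta)\|_F \le \|\y{R}_\Omega\y{P}_{T_k^\perp}(\eta)\|_F \le \tfrac{16}{3}\log(D)\,\|\y{P}_{T_k^\perp}(\eta)\|_F$, and then squares --- thereby incurring a factor $\log(D)^2$ of which one power is cancelled by $m$ --- you instead use the exact quadratic-form identity $\langle \y{R}_\Omega\y{P}_{T_k}(\eta),\y{P}_{T_k}(\eta)\rangle = \|P_\Omega\y{P}_{T_k}(\eta)\|_2^2 = \|P_\Omega\y{P}_{T_k^\perp}(\eta)\|_2^2 = \langle \y{R}_\Omega\y{P}_{T_k^\perp}(\eta),\y{P}_{T_k^\perp}(\eta)\rangle$ and bound the right side directly by $\|\y{R}_\Omega\|_{S_\infty}\|\y{P}_{T_k^\perp}(\eta)\|_F^2$, picking up only a single factor of $\log(D)$, which then cancels completely against $m$. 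This gives $\|\eta\|_F \lesssim \sqrt{d/(\mu_0 r)}\,\|\y{P}_{T_k^\perp}(\eta)\|_F$, strictly stronger than the stated bound; absorbing the spare $\log(D)$ is fine. Minor remark on Step~2: your extra Eckart--Young step (bounding $\|\y{T}_r(\f{X}\hk)-\f{X}^0\|_{S_\infty}\le 2\|\f{X}\hk-\f{X}^0\|_{S_\infty}$) is unnecessary --- the perturbation lemma's tangent space $T_{\f{X}}$ is already defined via the top-$r$ singular vectors of $\f{X}$, so it can be applied directly with $\f{X}=\f{X}\hk$, as the paper does, which saves a factor of~2 in the constant $C_1$. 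Both choices are valid.
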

\begin{proof}[{Proof of \Cref{lemma:MatrixIRLS:localRIP}}]
	Assume that there are $m$ locations $\Omega = (i_{\ell},j_{\ell})_{\ell=1}^m$ in $[d_1] \times [d_2]$ sampled independently uniformly \emph{with replacement}, where $m$ fulfills \eqref{eq:MatrixIRLS:matrixcompletion:samplecomplexity} with $C:= 7/\varepsilon^2$ and $\varepsilon= 0.1$. By \Cref{lem:repetition}, it follows that the corresponding operator $\y{R}_{\Omega}: \Rdd \to \Rdd$ from \eqref{eq:ROmega:def} fulfills 
	\begin{equation} \label{eq:MatrixIRLS:localconvergence:matrixcompletion:05}
	\|\y{R}_{\Omega}\|_{S_\infty} \leq \frac{16}{3} \log(D)
	\end{equation}
	on an event called $E_{\Omega}$, which occurs with a probability of at least $1- D^{-2}$, and by \Cref{lem:RIPPS}, the tangent space $T_0=T_{\f{X}^0} \mathcal{M}_{r}$ corresponding to the $\mu_0$-incoherent rank-$r$ matrix $\f{X}^0$ fulfills 
	\[
	\left\|\frac{d_1 d_2}{m}\mathcal{P}_{T_0} P_{\Omega}^* P_{\Omega}  \mathcal{P}_{T_0}-\mathcal{P}_{T_0}\right\|_{S_{\infty}}
	\leq \varepsilon 
	\]
	on an event called $E_{\Omega,T_0}$, which occurs with a probability of at least $1- D^{-2}$.
	Let $\tilde{\epsilon} = \frac{1}{10}$. If $\f{X}\hk \in \Rdd$ is such that $\|\f{X}\hk - \f{X}^0\|_{S_\infty} \leq \widetilde{\xi} \sigma_r(\f{X}^0)$ with
	\begin{equation} \label{eq:MatrixIRLS:localconvergence:matrixcompletion:066}
	\widetilde{\xi} = \frac{\sqrt{3}}{32} \frac{\epsilon}{\sqrt{\log(D)(1+\epsilon)}} \sqrt{\frac{m}{d_1 d_2}} = \frac{\sqrt{3}}{32} \frac{1}{10 \sqrt{\log(D)(11/10)}} \sqrt{\frac{m}{d_1 d_2}},
	\end{equation}
	it follows by \Cref{eq:MatrixIRLS:tangentspace:localRIP:perturbation} that on the event $E_{\Omega} \cap E_{\Omega,T_0}$, the tangent space $T_k:=\f{X}\hk$ onto the rank-$r$ manifold at $\f{X}\hk$ fulfills
	\begin{equation} \label{eq:MatrixIRLS:localconvergence:matrixcompletion:075}
	\left\| \frac{d_1 d_2}{m} \y{P}_{T_k}\y{R}_{\Omega}\y{P}_{T_k}- \y{P}_{T_k}\right\|_{S_\infty}
	\leq 4 \tilde{\epsilon} = \frac{2}{5}.
	\end{equation}
	Next, we claim that on the event $E_{\Omega} \cap E_{\Omega,T_0}$,
	\begin{equation} \label{eq:MatrixIRLS:localconvergence:matrixcompletion:1}
		\|\f{\eta}\|_{F} \leq \sqrt{\frac{\widetilde{C} d \log(D)}{\mu_0 r}} \|\y{P}_{T_k^{\perp}}(\f{\eta})\|_{F}.
	\end{equation}
	for any for each matrix $\eta \in \ker P_{\Omega}$ in the null space of the subsampling operator $P_{\Omega}:\Rdd \to \R^m$.
	
	Indeed, to show this claim, we first note that $\f{\eta} \in \ker P_{\Omega}$ if and only if $\f{\eta} \in \ker \y{R}_{\Omega}: P_{\Omega}^* P_{\Omega}$.
	Let $\f{\eta} \in \ker \y{R}_{\Omega}$. Then 
	\[
	\begin{split}
	\|\y{P}_{T_k}(\f{\eta})\|_F^2 &= \langle \y{P}_{T_k}(\f{\eta}), \y{P}_{T_k}(\f{\eta}) \rangle \\
	&= \left\langle \y{P}_{T_k}(\f{\eta}),\frac{d_1 d_2}{m}  \y{P}_{T_k}\y{R}_{\Omega}\y{P}_{T_k}(\f{\eta}) \right\rangle + 
	\left\langle \y{P}_{T_k}(\f{\eta}),\y{P}_{T_k}(\f{\eta}) - \frac{d_1 d_2}{m} \y{P}_{T_k}\y{R}_{\Omega}\y{P}_{T_k}(\f{\eta})\right\rangle \\
	&\leq \left\langle \y{P}_{T_k}(\f{\eta}),\frac{d_1 d_2}{m}  \y{P}_{T_k}\y{R}_{\Omega}\y{P}_{T_k}(\f{\eta}) \right\rangle + \|  \y{P}_{T_k}(\f{\eta})\|_F \left\| \y{P}_{T_k}- \frac{d_1 d_2}{m} \y{P}_{T_k}\y{R}_{\Omega}\y{P}_{T_k}\right\|_{S_\infty} \|\y{P}_{T_k}(\f{\eta})\|_F \\
	&\leq \left\langle \y{P}_{T_k}(\f{\eta}),\frac{d_1 d_2}{m}  \y{P}_{T_k}\y{R}_{\Omega}\y{P}_{T_k}(\f{\eta}) \right\rangle + 4 \epsilon \|\y{P}_{T_k}(\f{\eta})\|_F^2,
	\end{split}
	\]
	using \eqref{eq:MatrixIRLS:localconvergence:matrixcompletion:075} in the last inequality, which implies that
	\[
	\begin{split}
	\|\y{P}_{T_k}(\f{\eta})\|_F^2 &\leq \frac{1}{1-4\epsilon} \frac{d_1 d_2}{m} \langle \y{P}_{T_k}(\f{\eta}), \y{P}_{T_k}\y{R}_{\Omega}^2\y{P}_{T_k}(\f{\eta}) \rangle =  \frac{1}{1-4\epsilon} \frac{d_1 d_2}{m} \|\y{R}_{\Omega} \y{P}_{T_k}(\f{\eta})\|_F^2 \\
	&\leq \frac{2 d_1 d_2}{m} \|\y{R}_{\Omega} \y{P}_{T_k}(\f{\eta})\|_F^2
	\end{split}
	\]
	using the fact that $\y{R}_{\Omega}: \Rdd \to \Rdd$ is positive semidefinite and has eigenvalues that are $0$ or larger or equal than $1$ only. Furthermore, we used that $\epsilon \leq \frac{1}{10}$ in the last inequality.
	
	Since  $\f{\eta} \in \ker \y{R}_{\Omega}$, it holds that 
	\[
	0 = \|\y{R}_{\Omega}(\f{\eta})\|_F = \left\|\y{R}_{\Omega}\left(\y{P}_{T_k}(\f{\eta}) +  \y{P}_{T_k^{\perp}}(\f{\eta})\right)\right\|_F \geq \|\y{R}_{\Omega}\y{P}_{T_k}(\f{\eta})\|_F - \|\y{R}_{\Omega}\y{P}_{T_k^{\perp}}(\f{\eta})\|_F
	\]
	so that
	\[
	\|\y{R}_{\Omega}\y{P}_{T_k}(\f{\eta})\|_F \leq \|\y{R}_{\Omega}\y{P}_{T_k^{\perp}}(\f{\eta})\|_F \leq \frac{16}{3} \log(D) \|\y{P}_{T_k^{\perp}}(\f{\eta})\|_F,
	\]
	where we used \eqref{eq:MatrixIRLS:localconvergence:matrixcompletion:05} in the last inequality. Inserting this above, we obtain
	\[
	\begin{split}
	\|\f{\eta}\|_F^2 &= \|\y{P}_{T_k}(\f{\eta})\|_F^2 + \|\y{P}_{T_k^{\perp}}(\f{\eta})\|_F^2 \leq \left(\frac{2 d_1 d_2}{m} \frac{16^2}{3^2} \log(D)^2  + 1 \right) \|\y{P}_{T_k^{\perp}}(\f{\eta})\|_F^2 \\
	&\leq \left( \frac{2 d_1 d_2}{C \mu_0 r (d_1 + d_2) \log(d_1+ d_2)} \frac{16^2}{3^2} \log(D)^2  + 1 \right) \|\y{P}_{T_k^{\perp}}(\f{\eta})\|_F^2 \\
	&\leq  \frac{\widetilde{C} d \log(D)}{\mu_0 r}  \|\y{P}_{T_k^{\perp}}(\f{\eta})\|_F^2,
	\end{split}
	\]
	where we used the sample complexity condition \eqref{eq:MatrixIRLS:matrixcompletion:samplecomplexity} in the second inequality and the definition 
	\[
	\widetilde{C}:= \frac{4 \cdot 16^2}{C \cdot 3^2}
	\]
	for the constant $\widetilde{C}$.
	
	Moreover, we observe that for $C_1: = \frac{\sqrt{C}}{320}\sqrt{\frac{30}{11}}$ where $C$ is the constant of \eqref{eq:MatrixIRLS:matrixcompletion:samplecomplexity}, it holds that 
	\[
	C_1 \sqrt{\frac{\mu_0 r}{d}} 	\leq \frac{\sqrt{3}}{32} \frac{1}{10 \sqrt{\log(D)(11/10)}} \sqrt{\frac{C \mu_0 r (d_1 + d_2)\log(d_1 + d_2)}{d_1 d_2}} \leq \widetilde{\xi},
	\]
	implying that the two statements of \Cref{lemma:MatrixIRLS:localRIP} are satisfied on the event $E_{\Omega} \cap E_{\Omega,T_0}$ if \eqref{eq:MatrixIRLS:closeness:assumption} holds. By the above mentioned probability bounds and a union bound, $E_{\Omega} \cap E_{\Omega,T_0}$ occurs with a probability of at least $1 - 2D^{-2}$, finishing the proof for the sampling with replacement model. By the argument of Proposition 3 of \cite{recht}, the result extends to the model of sampling locations drawn uniformly at random without replacement, with the same probability bound. This concludes the proof of \Cref{lemma:MatrixIRLS:localRIP}.
\end{proof}

The following lemma will also play a role in the proof of \Cref{cor:MatrixIRLS:localconvergence:matrixcompletion}.

\begin{lemma}\label{lemma:etaksigmarp1Xk}
Let $C, \widetilde{C},C_1$ be the constants of \Cref{lemma:MatrixIRLS:localRIP} and $\mu_0$ be the incoherence factor of a rank-$r$ matrix $\f{X}^0$. If 
\[
m \geq C \mu_0 r (d_1 + d_2) \log(d_1 + d_2)
\]
and if $\eta\hk = \f{X}\hk - \f{X}^0$ fulfills
\[
\|\eta\hk\|_{S_\infty} \leq \xi  \sigma_r(\f{X}^0),
\]
with
\[
\xi:= \min\left( C_1 \sqrt{\frac{\mu_0 r}{d}}, \frac{\mu_0}{4(1+6 \kappa) d \log(D) \widetilde{C}} \right)
\]
then, on the event of  \Cref{lemma:MatrixIRLS:localRIP}, it holds that
\begin{equation}
\|\f{\eta}\hk\|_{S_\infty} < \sqrt{\frac{4 \widetilde{C} d (d-r) \log(D)}{\mu_0 r}}  \sigma_{r+1}(\f{X}\hk).
\end{equation}
\end{lemma}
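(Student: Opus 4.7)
The plan is to combine the null-space-type estimate from \Cref{lemma:MatrixIRLS:localRIP} with a tangent-space perturbation bound to derive a self-bounding inequality on $\|\eta^{(k)}\|_{S_\infty}$ and then absorb the nonlinear residual using the hypothesis on $\xi$.

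First, since $P_{\Omega}(\f{X}\hk)=\f{y}=P_{\Omega}(\f{X}^0)$, the error $\eta\hk = \f{X}\hk-\f{X}^0$ lies in $\ker P_{\Omega}$. The first half of the $\xi$-assumption $\xi \leq C_1\sqrt{\mu_0 r/d}$ ensures that the closeness hypothesis of \Cref{lemma:MatrixIRLS:localRIP} is met, so on the event of that lemma,
\[
\|\eta\hk\|_F \leq \sqrt{\tfrac{\widetilde{C}d\log(D)}{\mu_0 r}}\,\|\y{P}_{T_k^{\perp}}(\eta\hk)\|_F .
\]
Write $\y{P}_{T_k^{\perp}}(\eta\hk)=\y{P}_{T_k^{\perp}}(\f{X}\hk)-\y{P}_{T_k^{\perp}}(\f{X}^0)$ and apply the triangle inequality to bound the two terms separately.

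For the first term, since $T_k$ is the tangent space at the best rank-$r$ approximation of $\f{X}\hk$, a direct computation using the SVD \eqref{eq:Xk:bothsvds} gives $\y{P}_{T_k^{\perp}}(\f{X}\hk)=\f{U}_\perp\hk\f{\Sigma}_\perp\hk\f{V}_\perp^{(k)*}$, so $\|\y{P}_{T_k^{\perp}}(\f{X}\hk)\|_F \leq \sqrt{d-r}\,\sigma_{r+1}(\f{X}\hk)$. For the second term, use the identity $\y{P}_{T_k^{\perp}}(\f{X}^0)=(\f{I}-\f{U}\hk\f{U}^{(k)*})\f{U}_0\f{\Sigma}_0\f{V}_0^*(\f{I}-\f{V}\hk\f{V}^{(k)*})$ and the subadditive bound $\|ABC\|_F\leq \|A\|_F\|B\|_{S_\infty}\|C\|_{S_\infty}$, followed by $\|(\f{I}-\f{U}\hk\f{U}^{(k)*})\f{U}_0\|_F\leq\sqrt r\,\|\f{U}\hk\f{U}^{(k)*}-\f{U}_0\f{U}_0^*\|_{S_\infty}$ and the analogous spectral-norm bound on the right. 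Lemma 4.2 of \cite{WeiCCL16MatrixRecovery} together with Weyl's inequality $\sigma_{r+1}(\f{X}\hk)\leq\|\eta\hk\|_{S_\infty}$ yields $\|\f{U}\hk\f{U}^{(k)*}-\f{U}_0\f{U}_0^*\|_{S_\infty}\leq 2\|\eta\hk\|_{S_\infty}/\sigma_r(\f{X}^0)$ and the same for the $\f{V}$-factor, giving
\[
\|\y{P}_{T_k^{\perp}}(\f{X}^0)\|_F \leq \frac{4\sqrt{r}\,\kappa\,\|\eta\hk\|_{S_\infty}^{2}}{\sigma_r(\f{X}^0)} .
\]

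Combining and using $\|\eta\hk\|_{S_\infty}\leq\|\eta\hk\|_F$ yields the self-bounding inequality
\[
\|\eta\hk\|_{S_\infty} \leq \sqrt{\tfrac{\widetilde{C}d\log(D)}{\mu_0 r}}\Big(\sqrt{d-r}\,\sigma_{r+1}(\f{X}\hk)+\tfrac{4\sqrt r\kappa\|\eta\hk\|_{S_\infty}^{2}}{\sigma_r(\f{X}^0)}\Big).
\]
The hypothesis $\|\eta\hk\|_{S_\infty}\leq \xi\sigma_r(\f{X}^0)$ with $\xi\leq \mu_0/(4(1+6\kappa)d\log(D)\widetilde C)$ is then used to check that $4\sqrt{\widetilde{C}d\log(D)/\mu_0}\sqrt r\kappa\,\xi\leq 1/2$, so the quadratic term is absorbed as at most $\tfrac12\|\eta\hk\|_{S_\infty}$. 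Rearranging leaves the clean estimate $\|\eta\hk\|_{S_\infty}\leq 2\sqrt{\widetilde Cd(d-r)\log(D)/(\mu_0 r)}\,\sigma_{r+1}(\f{X}\hk)$, which is the claim.

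The main obstacle is step involving $\|\y{P}_{T_k^{\perp}}(\f{X}^0)\|_F$: one must arrange the norm placement in $\|ABC\|_F$ so that two sine-of-principal-angle factors appear, producing a quadratic dependence on $\|\eta\hk\|_{S_\infty}$. A linear bound via the operator inequality \eqref{eq:lemma:wei:3} applied to $\|X^0\|_F$ would give a term $4\sqrt r\kappa\|\eta\hk\|_{S_\infty}$ that cannot be absorbed for large $\kappa$, and the whole scheme collapses; the quadratic refinement is exactly what allows the $\kappa$-dependent smallness hypothesis on $\xi$ to do its job.
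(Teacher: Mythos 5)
Your proposal is correct and follows essentially the same route as the paper's own proof: the null-space estimate from \Cref{lemma:MatrixIRLS:localRIP}, the triangle-inequality split $\y{P}_{T_k^\perp}(\eta\hk)=\y{P}_{T_k^\perp}(\f{X}\hk)-\y{P}_{T_k^\perp}(\f{X}^0)$, the $\sqrt{d-r}\,\sigma_{r+1}(\f{X}\hk)$ bound on the first piece, a quadratic-in-$\|\eta\hk\|_{S_\infty}$ bound on the second via two subspace-angle factors sandwiching $\f{\Sigma}_0$, and absorption of the nonlinear term using the second half of the $\xi$-hypothesis. The only cosmetic difference is that you invoke Lemma~4.2 of \cite{WeiCCL16MatrixRecovery} (together with Weyl) to bound $\|\f{U}\hk\f{U}^{(k)*}-\f{U}_0\f{U}_0^*\|_{S_\infty}$ while the paper applies Wedin's bound (\Cref{lemma:Wedinbound}) directly to $\|\f{U}_\perp^{(k)*}\f{U}_0\|_{S_\infty}$ and $\|\f{V}_0^*\f{V}_\perp\hk\|_{S_\infty}$; this yields a constant of $4$ instead of the paper's $8$, and both comfortably suffice for the absorption step.
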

\begin{proof}
First, we compute that 
\[
\begin{split}
\|\y{P}_{T_k^{\perp}}(\f{\eta}\hk)\|_{F} &\leq  \|\y{P}_{T_k^{\perp}}(\f{X}\hk)\|_{F} +   \|\y{P}_{T_k^{\perp}}(\f{X}^{0})\|_{F} \leq \sqrt{\sum_{i=r+1}^d \sigma_{i}^2(\f{X}\hk)} +  \left\| \f{U}_{\perp}^{(k)}\f{U}_{\perp}^{(k)*} \f{X}^0 \f{V}_{\perp}^{(k)}\f{V}_{\perp}^{(k)*}\right\|_{F} \\
&\leq  \sqrt{d-r} \sigma_{r+1}(\f{X}\hk) + \|\f{U}_{\perp}^{(k)*} \f{U}_0\|_{S_\infty} \|\f{\Sigma}_0\|_F \|\f{V}_0^{*} \f{V}_{\perp}^{(k)}\|_{S_\infty} \\
&\leq  \sqrt{d-r} \sigma_{r+1}(\f{X}\hk) +  \frac{2 \|\f{\eta}\hk\|_{S_\infty}^2}{(1-\zeta)^2\sigma_r^2(\f{X}^0)} \sqrt{r} \sigma_1(\f{X}^0) \\
&=  \sqrt{d-r} \sigma_{r+1}(\f{X}\hk) +  \frac{2 \|\f{\eta}\hk\|_{S_\infty}^2}{(1-\zeta)^2\sigma_r(\f{X}^0)} \sqrt{r} \kappa,
\end{split}
\]
where $0 < \zeta < 1$ such that $\|\f{X}\hk - \f{X}^0\|_{S_{\infty}} \leq \zeta \sigma_r(\f{X}^0)$, using \Cref{lemma:Wedinbound} twice in the fourth inequality and $\|\f{A} \f{B}\|_F \leq \|\f{A}\|_{S_\infty} \|\f{B}\|_F$ all matrices $\f{A}$ and $\f{B}$, referring to the notations of \Cref{lemma:MatrixIRLS:localconvergence:2} (see below) for $\f{U}_0,\f{\Sigma}_0,\f{V}_0,\f{U}_{\perp}\hk$ and $\f{V}_{\perp}\hk$.

Using \Cref{lemma:MatrixIRLS:localRIP} for $\f{\eta}\hk = \f{X}\hk - \f{X}^{0}$, we obtain on the event on which the statement of \Cref{lemma:MatrixIRLS:localRIP} holds that
\[
\begin{split}
\|\eta\hk\|_{S_\infty} \leq \|\eta\hk\|_{F} &\leq  \sqrt{\frac{\widetilde{C} d \log(D)}{\mu_0 r}} \|\y{P}_{T_k^{\perp}}(\f{\eta}\hk)\|_{F} \\
&\leq  \sqrt{\frac{\widetilde{C} d \log(D)}{\mu_0 r}}     \left(   \sqrt{d-r} \sigma_{r+1}(\f{X}\hk) +  \frac{8  \sqrt{r} \kappa  \|\f{\eta}\hk\|_{S_\infty}^2}{\sigma_r(\f{X}^0)} \right) \\
&\leq  \sqrt{\frac{\widetilde{C} d \log(D)}{\mu_0 r}}     \left(   \sqrt{d-r} \sigma_{r+1}(\f{X}\hk) +   \frac{8  \sqrt{r} \kappa \mu_0 \sigma_{r}(\f{X}^0)}{4 (1+6 \kappa) d \log(D) \widetilde{C} \sigma_r(\f{X}^0)}   \|\f{\eta}\hk\|_{S_\infty} \right) \\
&= \sqrt{\frac{\widetilde{C} d (d-r) \log(D)}{\mu_0 r}} \sigma_{r+1}(\f{X}\hk) + \frac{1}{3} \sqrt{\frac{\mu_0}{\widetilde{C} d \log(D)}} \|\eta\hk\|_{S_\infty}.
\end{split}
\]
Since $\mu_0 \leq \frac{d}{r}$, we have that $\frac{1}{3} \sqrt{\frac{\mu_0}{\widetilde{C} d \log(D)}} < \frac{1}{2}$, and therefore we obtain, after rearranging,
\[
\left(1-\frac{1}{2}\right) \|\eta\hk\|_{S_\infty} < \left(1- \frac{1}{3} \sqrt{\frac{\mu_0}{\widetilde{C} d \log(D)}}  \right) \|\eta\hk\|_{S_\infty} \leq \sqrt{\frac{\widetilde{C} d (d-r) \log(D)}{\mu_0 r}} \sigma_{r+1}(\f{X}\hk),
\]
which implies the statement of this lemma.
\end{proof}

\subsection{Weight operator and matrix perturbation}
In the following, we use a well-known bound on perturbations of the singular value decomposition, which is originally due to \cite{Wedin72}. The result bounds the alignment of the subspaces spanned by the singular vectors of two matrices by their norm distance, given a gap between the first singular values of one matrix and the last singular values of the other matrix that is sufficiently pronounced. 
\begin{lemma}[Wedin's bound \cite{Stewart06}] \label{lemma:Wedinbound} 
Let $\f{X}$ and $\widehat{\f{X}}$ be two matrices of the same size and their singular value decompositions 
\begin{align*}
\f{X}=\begin{pmatrix}\f{U}&\f{U}_{\perp}\end{pmatrix}\begin{pmatrix}\f{\Sigma} &0\\0& \f{\Sigma}_{\perp} \end{pmatrix}\begin{pmatrix}\f{V}^*\\ \f{V}_{\perp}^*\end{pmatrix}\quad\text{ and } \quad \widehat{\f{X}}=\begin{pmatrix}\widehat{\f{U}}&\widehat{\f{U}}_{\perp}\end{pmatrix}\begin{pmatrix}\widehat{\f{\Sigma}} &0\\0& \widehat{\f{\Sigma}}_{\perp} \end{pmatrix}\begin{pmatrix}\widehat{\f{V}}^*\\\widehat{\f{V}}_{\perp}^*\end{pmatrix},
\end{align*}
where the submatrices have the sizes of corresponding dimensions. Suppose that $\delta,\alpha$ satisfying $0< \delta\leq \alpha$ are such that $\alpha\leq \sigma_{\min}(\Sigma)$ and $\sigma_{\max}(\widehat{\Sigma}_{\perp})<\alpha-\delta$. Then 
\begin{equation}\label{Wedin}
\|\widehat{\f{U}}_{\perp}^*\f{U}\|_{S_\infty} \leq \sqrt{2} \frac{\|\f{X}-\widehat{\f{X}}\|_{S_\infty}}{\delta} \text{  and  }\|\widehat{\f{V}}_{\perp}^*\f{V}\|_{S_\infty} \leq \sqrt{2}\frac{\|\f{X}-\widehat{\f{X}}\|_{S_\infty}}{\delta}.
\end{equation}
\end{lemma}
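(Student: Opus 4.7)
My plan is to reduce Wedin's bound to a single block Sylvester equation and exploit the spectral gap of the associated block matrix directly. The starting point is two matrix identities obtained by probing the relation $\widehat{\f{X}} = \f{X} + \f{R}$ with $\f{R} := \widehat{\f{X}} - \f{X}$. Since $\widehat{\f{U}}_\perp^* \widehat{\f{X}} = \widehat{\f{\Sigma}}_\perp \widehat{\f{V}}_\perp^*$ and $\f{X} \f{V} = \f{U}\f{\Sigma}$, left-multiplying by $\widehat{\f{U}}_\perp^*$ and right-multiplying by $\f{V}$ yields
\[
\widehat{\f{\Sigma}}_\perp \widehat{\f{V}}_\perp^* \f{V} - \widehat{\f{U}}_\perp^* \f{U} \f{\Sigma} = \widehat{\f{U}}_\perp^* \f{R} \f{V} =: \f{E}_1.
\]
The same manipulation for the adjoint identities $\widehat{\f{V}}_\perp^* \widehat{\f{X}}^* = \widehat{\f{\Sigma}}_\perp^* \widehat{\f{U}}_\perp^*$ and $\f{X}^* \f{U} = \f{V}\f{\Sigma}$ gives
\[
\widehat{\f{\Sigma}}_\perp^{\!\top} \widehat{\f{U}}_\perp^* \f{U} - \widehat{\f{V}}_\perp^* \f{V} \f{\Sigma} = \widehat{\f{V}}_\perp^* \f{R}^* \f{U} =: \f{E}_2.
\]

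Next, I stack the unknowns into $\f{A} := \widehat{\f{U}}_\perp^* \f{U}$, $\f{B} := \widehat{\f{V}}_\perp^* \f{V}$, $\f{N} := \left(\begin{smallmatrix} \f{A} \\ \f{B} \end{smallmatrix}\right)$, $\f{F} := \left(\begin{smallmatrix} \f{E}_1 \\ \f{E}_2 \end{smallmatrix}\right)$, and introduce the symmetric block matrix
\[
\f{P} := \begin{pmatrix} 0 & \widehat{\f{\Sigma}}_\perp \\ \widehat{\f{\Sigma}}_\perp^{\!\top} & 0 \end{pmatrix}.
\]
The two scalar identities collapse into the single Sylvester equation $\f{P}\f{N} - \f{N}\f{\Sigma} = \f{F}$. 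The eigenvalues of $\f{P}$ are exactly $\pm\sigma_i(\widehat{\f{\Sigma}}_\perp)$ and therefore lie in the open interval $(-(\alpha-\delta),\alpha-\delta)$, while the eigenvalues of the Hermitian matrix $\f{\Sigma}$ lie in $[\alpha,\infty)$, so the two spectra are separated by at least $\delta$. Invoking the classical Davis--Kahan/Sylvester inequality in spectral norm (valid because $\f{P}$ and $\f{\Sigma}$ are Hermitian), I conclude $\|\f{N}\|_{S_\infty} \leq \|\f{F}\|_{S_\infty}/\delta$.

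It then remains to estimate $\|\f{F}\|_{S_\infty}$ and to extract individual bounds on $\f{A}$ and $\f{B}$ from the bound on $\f{N}$. For the former I use $\f{F}^*\f{F} = \f{E}_1^*\f{E}_1 + \f{E}_2^*\f{E}_2$, so $\|\f{F}\|_{S_\infty}^2 \leq \|\f{E}_1\|_{S_\infty}^2 + \|\f{E}_2\|_{S_\infty}^2 \leq 2\|\f{R}\|_{S_\infty}^2$, the last step because $\widehat{\f{U}}_\perp, \widehat{\f{V}}_\perp, \f{U}, \f{V}$ are partial isometries. For the latter, $\|\f{A}\|_{S_\infty}, \|\f{B}\|_{S_\infty} \leq \|\f{N}\|_{S_\infty}$ follows from the fact that dropping one block can only decrease the largest singular value. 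Combining these gives $\|\widehat{\f{U}}_\perp^*\f{U}\|_{S_\infty}, \|\widehat{\f{V}}_\perp^*\f{V}\|_{S_\infty} \leq \sqrt{2}\|\f{R}\|_{S_\infty}/\delta$, as desired.

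The main obstacle will be recognizing that the correct object to bound is the stacked $\f{N}$ via the symmetric block Sylvester operator with gap exactly $\delta$; a naive elimination of one of $\f{A}$ or $\f{B}$ produces a Sylvester equation involving $\widehat{\f{\Sigma}}_\perp^2$ and $\f{\Sigma}^2$ whose gap is $\delta(2\alpha-\delta)$ but whose right-hand side picks up an extra factor of $\alpha$, yielding only the inferior constant $2$ in place of $\sqrt{2}$. A secondary but more routine issue is justifying the spectral-norm form of the Sylvester bound, which requires the Hermiticity of $\f{P}$ and $\f{\Sigma}$ rather than a bare normality hypothesis.
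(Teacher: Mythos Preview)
The paper does not actually prove this lemma; it is stated with a citation to \cite{Stewart06} as a classical result and used as a black box in the subsequent arguments. Your proof is correct and is essentially the standard argument going back to Wedin himself: derive the two residual identities, stack them into the block Sylvester equation $\f{P}\f{N}-\f{N}\f{\Sigma}=\f{F}$ with the Jordan--Wielandt block $\f{P}$, exploit the fact that $\sigma(\f{P})\subset(-(\alpha-\delta),\alpha-\delta)$ lies in an interval whose $\delta$-enlargement still excludes $\sigma(\f{\Sigma})\subset[\alpha,\infty)$, and invoke the Hermitian Sylvester bound $\|\f{N}\|_{S_\infty}\le\|\f{F}\|_{S_\infty}/\delta$. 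Your handling of the $\sqrt{2}$ via $\f{F}^*\f{F}=\f{E}_1^*\f{E}_1+\f{E}_2^*\f{E}_2$ and the extraction of the individual block norms from $\|\f{N}\|_{S_\infty}$ are both clean. The only point worth making explicit is that the Hermitian Sylvester inequality you invoke (with constant~$1$ in spectral norm) requires precisely the interval-versus-complement separation you have here; you allude to this in your final remark, and it is indeed satisfied, so nothing is missing.
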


We also use a lemma which provides an explicit formula for the calculation of the new iterate $\f{X}\hk$ of \texttt{MatrixIRLS} and its characterization by optimality conditions. It is well-known in the IRLS literature, see, e.g., Eq. (1.9) and Lemma 5.2 of \cite{Daubechies10} or Lemma 5.1 \cite{Fornasier11}, and is very general as it holds for any positive definite weight operator. 
\begin{lemma}\label{lemma:MatrixIRLS:Xk:optimality} 
Let $P_{\Omega}: \Rdd \to \R^m$ be the sampling operator, let $\f{y} \in \R^m$. Let $W^{(k)}:\Rdd \to \Rdd$ be the weight operator of \Cref{def:optimalweightoperator} defined based on $\f{X}\hk \in \Rdd$. Then the solution of the weighted least squares step \eqref{eq:MatrixIRLS:Xdef} of \Cref{algo:MatrixIRLS} is unique and
\begin{equation} \label{eq:MatrixIRLS:Xk:optimality:explicitformula}
\f{X}^{(k+1)} =\argmin\limits_{P_{\Omega}(\f{X})=\f{y}} \langle \f{X}, W^{(k)}(\f{X}) \rangle =  (W\hk)^{-1}P_{\Omega}^*\left(P_{\Omega}(W\hk)^{-1} P_{\Omega}^*\right)^{-1}(\f{y}),
\end{equation}
where $(W\hk)^{-1}: \Rdd \to \Rdd$ is the inverse matrix operator of $W^{(k)}$.  

Moreover,
a matrix $\f{X}^{(k+1)} \in \Rdd$ coincides with the one of \eqref{eq:MatrixIRLS:Xk:optimality:explicitformula} if and only if
\begin{equation} \label{eq_proof_JminX_1}
\langle W^{(k)}(\f{X}\hkk),\f{\eta} \rangle = 0 \; \text{ for all } \; \f{\eta} \in \ker P_{\Omega}\;\;\text{ and } \;\; P_{\Omega}(\f{X}\hkk) = \f{y}. 
\end{equation}
\end{lemma}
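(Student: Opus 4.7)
The plan is to prove this as a standard result on linearly constrained strictly convex quadratic programming, exploiting that $W^{(k)}$ is symmetric positive definite.

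First, I would observe that the weight operator $W^{(k)}:\Rdd \to \Rdd$ is self-adjoint and positive definite: by \Cref{def:optimalweightoperator}, its matrix representation in the orthonormal basis of $\Rdd$ formed by the outer products of columns of $\f{U}_k$ and $\f{V}_k$ is diagonal with strictly positive entries $(\f{H}_k)_{ij} = \left(\max(\sigma_i^{(k)},\epsilon_k)\max(\sigma_j^{(k)},\epsilon_k)\right)^{-1}$. Consequently, the map $\f{X} \mapsto \langle \f{X}, W^{(k)}(\f{X})\rangle$ is strictly convex, and the feasible set $\{\f{X}: P_\Omega(\f{X}) = \f{y}\}$ is a nonempty affine subspace (it contains any preimage of $\f{y}$, e.g. $\f{X}^0$ when $\f{y}=P_\Omega(\f{X}^0)$). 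Therefore a unique minimizer $\f{X}^{(k+1)}$ exists.

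Next, I would derive the first-order KKT conditions of this equality-constrained quadratic program. Forming the Lagrangian $\y{L}(\f{X},\mu) = \tfrac12 \langle \f{X}, W^{(k)}(\f{X})\rangle + \langle \mu, P_\Omega(\f{X})-\f{y}\rangle$ and setting its $\f{X}$-gradient to zero yields $W^{(k)}(\f{X}^{(k+1)}) = -P_\Omega^*(\mu)$. Equivalently, $W^{(k)}(\f{X}^{(k+1)})$ lies in $\Range(P_\Omega^*) = (\ker P_\Omega)^\perp$, which is precisely the orthogonality condition $\langle W^{(k)}(\f{X}^{(k+1)}),\eta\rangle = 0$ for all $\eta \in \ker P_\Omega$. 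Strict convexity then guarantees that these necessary KKT conditions are also sufficient, giving the ``if and only if'' characterization \eqref{eq_proof_JminX_1}.

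To obtain the explicit formula \eqref{eq:MatrixIRLS:Xk:optimality:explicitformula}, I would invert $W^{(k)}$ (whose invertibility follows from the positivity established above) and solve for the multiplier. From $W^{(k)}(\f{X}^{(k+1)}) = -P_\Omega^*(\mu)$ I obtain $\f{X}^{(k+1)} = -(W^{(k)})^{-1} P_\Omega^*(\mu)$; substituting into $P_\Omega(\f{X}^{(k+1)}) = \f{y}$ yields the reduced system $P_\Omega (W^{(k)})^{-1} P_\Omega^*(\mu) = -\f{y}$. The operator $P_\Omega (W^{(k)})^{-1} P_\Omega^* : \R^m \to \R^m$ is symmetric positive definite and therefore invertible, since $P_\Omega$ is surjective onto $\R^m$ (so $P_\Omega^*$ has trivial kernel) and $(W^{(k)})^{-1}$ is positive definite. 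Solving for $\mu$ and substituting back yields the claimed explicit formula (with the sign absorbed into the multiplier). The argument is a textbook adaptation of the standard analysis of weighted least squares under linear constraints (cf.\ Eq.~(1.9) and Lemma~5.2 of \cite{Daubechies10} or Lemma~5.1 of \cite{Fornasier11}); the only input specific to our setup is the diagonal positive representation of $W^{(k)}$, so no serious obstacle is anticipated.
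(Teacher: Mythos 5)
Your proof is correct and self-contained. Worth noting: the paper itself gives no proof of this lemma; it just cites Eq.~(1.9) and Lemma~5.2 of \cite{Daubechies10} and Lemma~5.1 of \cite{Fornasier11} as standard references for the result ``for any positive definite weight operator.'' Your Lagrangian/KKT derivation is precisely the kind of argument those references contain, so there is no genuinely different route here, just the writing-out that the paper omits. The one implicit hypothesis you invoke --- that $P_\Omega$ is surjective, so $P_\Omega^*$ is injective and hence $P_\Omega (W^{(k)})^{-1} P_\Omega^*$ is a positive definite (invertible) operator on $\R^m$ --- is indeed the paper's standing assumption (it is what makes $P_\Omega P_\Omega^* = \f{I}_m$, used in the proof of \Cref{lemma:weightedleastsquares:implement}), but since the lemma statement does not say so explicitly, you should state it as a hypothesis rather than slip it in as an aside. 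Beyond that, the steps are all in order: existence and uniqueness from strict convexity of $\f{X}\mapsto\langle\f{X},W^{(k)}(\f{X})\rangle$ over the nonempty closed affine set $\{P_\Omega(\f{X})=\f{y}\}$ in finite dimensions; the KKT stationarity condition $W^{(k)}(\f{X}^{(k+1)})\in\Range(P_\Omega^*)=(\ker P_\Omega)^\perp$ together with feasibility giving the ``if and only if'' characterization \eqref{eq_proof_JminX_1}; and elimination of the multiplier giving \eqref{eq:MatrixIRLS:Xk:optimality:explicitformula}.
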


We show the following lemma. Wherever it appears, $\|\f{X}\|_{S_1}$ denotes the nuclear norm $\|\f{X}\|_{S_1} = \sum_{i=1}^d \sigma_i (\f{X})$ of a matrix $\f{X} \in \Rdd$.
\begin{lemma} \label{lemma:MatrixIRLS:localconvergence:1}
Let $\f{X}^0 \in \Rdd$ be a matrix of rank $r$, let $\f{X}\hk$ be the $k$-th iterate of \Cref{algo:MatrixIRLS} for input parameters $\Omega$, $\f{y}=P_{\Omega}(\f{X}^0)$ and $\widetilde{r} =r $. Assume that $\epsilon_k=\sigma_{r+1}(\f{X}\hk)$ and that
\begin{equation} \label{eq:localcvg:ass:1}
\|\eta \|_F \leq c(\mu_0,r,d_1,d_2) \|\y{P}_{T_k^{\perp}} \eta \|_F \quad \quad \text{ for all } \eta \in \ker P_{\Omega}
\end{equation}
for some constant $c(\mu_0,r,d_1,d_2)$ that may depend on $\mu_0,r, d_1,d_2$, where $T_k = T_{\mathcal{T}_{r}(\f{X}\hk)}\mathcal{M}_{r}$  is tangent space onto the manifold of rank-$r$ matrices at $\mathcal{T}_{r}(\f{X}\hk)$. Then
\begin{equation} \label{eq:MatrixIRLS:localconvergence:1}
	\|\f{X}\hkk - \f{X}^0\|_{S_\infty} \leq c(\mu_0,r,d_1,d_2)^2 \epsilon_k^{2} \|W\hk(\f{X}^0)\|_{S_1},
\end{equation}
if $W\hk : \Rdd \to \Rdd$ is the optimal weight operator of \Cref{def:optimalweightoperator} corresponding to $\f{X}\hk$.
\end{lemma}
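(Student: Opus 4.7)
The plan is to combine the optimality characterization of $\f{X}\hkk$ from \Cref{lemma:MatrixIRLS:Xk:optimality} with the structural decomposition of the weight operator $W\hk$ established in \eqref{eq:Wk:simplification:1}, together with the null-space property \eqref{eq:localcvg:ass:1}. Writing $\eta := \f{X}\hkk - \f{X}^0$, the starting observation is that $\eta \in \ker P_\Omega$ since $P_\Omega(\f{X}\hkk) = \f{y} = P_\Omega(\f{X}^0)$. Applying the first-order optimality condition $\langle W\hk(\f{X}\hkk), \eta'\rangle = 0$ for every $\eta' \in \ker P_\Omega$ to $\eta' = \eta$ and expanding $\f{X}\hkk = \f{X}^0 + \eta$ yields the key identity $\langle \eta, W\hk(\eta)\rangle = -\langle W\hk(\f{X}^0), \eta\rangle$. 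Since $W\hk$ is self-adjoint and positive definite (its eigenvalues are the strictly positive entries of $\f{H}_k$), the left-hand side is nonnegative, and Hölder duality between the Schatten-$\infty$ and Schatten-$1$ norms gives $\langle \eta, W\hk(\eta)\rangle \leq \|W\hk(\f{X}^0)\|_{S_1}\,\|\eta\|_{S_\infty}$.

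Next I would produce a matching \emph{lower} bound for $\langle \eta, W\hk(\eta)\rangle$ that isolates the off-tangent component of $\eta$. Using the decomposition $W\hk = P_{T_k}\f{D}_{S_k}P_{T_k}^* + \epsilon_k^{-2}(\f{I}-P_{T_k}P_{T_k}^*)$ from \eqref{eq:Wk:simplification:1} and the fact that $P_{T_k}P_{T_k}^* = \y{P}_{T_k}$ while $\f{D}_{S_k}$ is diagonal with strictly positive entries, one gets at once $\langle \eta, W\hk(\eta)\rangle \geq \epsilon_k^{-2}\|\y{P}_{T_k^\perp}(\eta)\|_F^2$. Combining this with the upper bound above yields $\|\y{P}_{T_k^\perp}(\eta)\|_F^2 \leq \epsilon_k^2\,\|W\hk(\f{X}^0)\|_{S_1}\,\|\eta\|_{S_\infty}$.

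To conclude, I would invoke the null-space hypothesis \eqref{eq:localcvg:ass:1} applied to $\eta$, which produces $\|\eta\|_F^2 \leq c(\mu_0,r,d_1,d_2)^2\,\|\y{P}_{T_k^\perp}(\eta)\|_F^2$. Chaining with the previous inequality and using the elementary bound $\|\eta\|_{S_\infty} \leq \|\eta\|_F$ gives $\|\eta\|_{S_\infty}^2 \leq \|\eta\|_F^2 \leq c(\mu_0,r,d_1,d_2)^2\,\epsilon_k^2\,\|W\hk(\f{X}^0)\|_{S_1}\,\|\eta\|_{S_\infty}$. Dividing by $\|\eta\|_{S_\infty}$ (the case $\eta = 0$ being trivial) yields the claimed bound \eqref{eq:MatrixIRLS:localconvergence:1}.

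The argument is largely algebraic and I don't anticipate a genuine obstacle: the entire proof is essentially a variational inequality combined with a lower bound on the weight form. The most delicate bookkeeping is ensuring that the lower bound $\epsilon_k^{-2}\|\y{P}_{T_k^\perp}(\eta)\|_F^2$ is indeed legitimate. This relies on the fact that $\epsilon_k^{-2}$ is \emph{exactly} the weight assigned to the $T_k^\perp$-component by the optimal weight operator of \Cref{def:optimalweightoperator} together with the smoothing update rule \eqref{eq:MatrixIRLS:epsdef}, which ensures that the entries of $\f{D}_{S_k}$ remain bounded below by strictly positive numbers so that the $P_{T_k}\f{D}_{S_k}P_{T_k}^*$-term can be safely discarded.
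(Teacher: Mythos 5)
Your proposal is correct and follows essentially the same argument as the paper: the optimality identity $\langle \eta, W^{(k)}(\eta)\rangle = -\langle W^{(k)}(\f{X}^0),\eta\rangle$, the Hölder pairing of $S_1$ against $S_\infty$, the fact that $W^{(k)}$ acts as $\epsilon_k^{-2}\,\f{I}$ on $T_k^\perp$ (which requires $r_k=r$, guaranteed by $\epsilon_k=\sigma_{r+1}(\f{X}^{(k)})$), and the null-space inequality together with $\|\cdot\|_{S_\infty}\leq\|\cdot\|_F$. The only cosmetic difference is that you obtain the lower bound $\langle\eta,W^{(k)}\eta\rangle\geq\epsilon_k^{-2}\|\y{P}_{T_k^\perp}\eta\|_F^2$ directly by discarding the positive $P_{T_k}\f{D}_{S_k}P_{T_k}^*$-term, whereas the paper computes the $T_k^\perp$ quadratic form exactly and then appeals to positive definiteness and orthogonality; these are equivalent.
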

\begin{proof}[Proof of \Cref{lemma:MatrixIRLS:localconvergence:1}]
Let $\f{\eta}\hkk:= \f{X}\hkk -\f{X}^0$.
Since $\f{\eta}\hkk$ is in the nullspace $\ker P_{\Omega}$, it follows from \eqref{eq:localcvg:ass:1} that
\begin{equation} \label{eq:MatrixIRLS:localconvergence:1p5}
\|\f{\eta}\hkk\|_{S_\infty}^2 \leq \|\f{\eta}\hkk\|_{F}^2 \leq c(\mu_0,r,d_1,d_2)^2  \|\y{P}_{T_k^\perp}(\f{\eta}\hkk)\|_{F}^2.
\end{equation}

Recalling the definition of the weight operator $W^{(k)}:\Rdd \to \Rdd$ from \Cref{def:optimalweightoperator} we see that, if 
\begin{equation} \label{eq:MatrixIRLS:localconvergence:XkSVD}
\f{X}\hk= \f{U}_k \f{\Sigma}_k \f{V}_k^* = 
\begin{bmatrix} 
\f{U}\hk & \f{U}_{\perp}\hk
\end{bmatrix}\begin{bmatrix} 
\f{\Sigma}\hk & 0 \\
0 & \f{\Sigma}_{\perp}\hk 
\end{bmatrix}
\begin{bmatrix} 
\f{V}^{(k)*} \\
\f{V}_{\perp}^{(k)*} 
\end{bmatrix}
\end{equation}
is a singular value decomposition with $\f{U}\hk \in \R^{d_1 \times r}$, $\f{U}_{\perp}\hk \in \R^{d_1 \times (d_1 -r)}$, $\f{V}\hk \in \R^{d_2 \times r}$, $\f{V}_{\perp}\hk \in \R^{d_2 \times (d_2 -r)}$, we have that
\begin{equation} \label{eq:MatrixIRLS:localconvergence:2}
\langle \f{Z},W\hk(\f{Z}) \rangle = \langle \f{U}_k^{*} \f{Z} \f{V}_k, \f{H}_k \circ (\f{U}_k^{*} \f{Z} \f{V}_k)\rangle
\end{equation}
where $\f{H}_k\ \in \Rdd$ is as in \Cref{def:optimalweightoperator}.

If $\f{Z} = \y{P}_{T_k^\perp}(\f{\eta}\hkk) \in T_k^\perp$, we know that $\f{U}^{(k)*} \f{Z} = 0$ and $\f{Z} \f{V}^{(k)} = 0$, and therefore
\[
\f{U}_k^{*} \f{Z} \f{V}_k = \begin{bmatrix} 
\f{U}^{(k)*} \\ \f{U}_{\perp}^{(k)*}
\end{bmatrix}
\f{Z}
\begin{bmatrix} 
\f{V}^{(k)} &
\f{V}_{\perp}^{(k)} 
\end{bmatrix}
= 
\begin{pmatrix}
0 & 0 \\
0 & \f{U}_{\perp}^{(k)*} \f{Z} \f{V}_{\perp}^{(k)}	
\end{pmatrix}
\]
with $\f{U}_{\perp}^{(k)*} \f{Z} \f{V}_{\perp}^{(k)} \in \R^{(d_1 -r) \times (d_2 -r)}$.

By assumption of \Cref{lemma:MatrixIRLS:localconvergence:1}, we know that $\epsilon_k = \sigma_{r+1}(\f{X}\hk)$, which means that $r_k := |\{i \in [d]: \sigma_i(\f{X}\hk) > \epsilon_k\}| = r$, and therefore $(\f{H}_k)_{ij} = \epsilon_k^{-2}$ for all $i,j > r$. This entails with \eqref{eq:MatrixIRLS:localconvergence:2} that
\[
\begin{split}
\langle \y{P}_{T_k^\perp}(\f{\eta}\hkk),W\hk(\y{P}_{T_k^\perp}(\f{\eta}\hkk)) \rangle  &= \epsilon_k^{-2} \langle\f{U}_k^{*} \y{P}_{T_k^\perp}(\f{\eta}\hkk) \f{V}_k, \f{U}_k^{*} \y{P}_{T_k^\perp}(\f{\eta}\hkk) \f{V}_k \rangle  \\
&= \epsilon_k^{-2} \langle \y{P}_{T_k^\perp}(\f{\eta}\hkk), \y{P}_{T_k^\perp}(\f{\eta}\hkk)\rangle = \epsilon_k^{-2} \|\y{P}_{T_k^\perp}(\f{\eta}\hkk)\|_F^2,
\end{split}
\]
using the cyclicity of the trace and the fact that $\f{U}_k$ and $\f{V}_k$ are orthonormal matrices.

Inserting this into \eqref{eq:MatrixIRLS:localconvergence:1p5}, we obtain
\begin{equation} \label{eq:MatrixIRLS:localconvergence:3}
\begin{split}
\|\f{\eta}\hkk\|_{S_\infty}^2 &\leq c(\mu_0,r,d_1,d_2)^2 \epsilon_k^{2} \left\langle \y{P}_{T_k^\perp}(\f{\eta}\hkk),W\hk(\y{P}_{T_k^\perp}(\f{\eta}\hkk)) \right\rangle \\
&\leq c(\mu_0,r,d_1,d_2)^2 \epsilon_k^{2} \left\langle \f{\eta}\hkk , W\hk(\f{\eta}\hkk)\right\rangle,
\end{split}
\end{equation}
where the last inequality holds since $W^{(k)}$ is positive definite and since 
\[
\left\langle \y{P}_{T_k^{\perp}}(\f{\eta}\hkk) , W\hk(\y{P}_{T_k}(\f{\eta}\hkk))\right\rangle = 0
\]
due to the orthogonality of $T_k$ and $T_k^{\perp}$. Due to \Cref{lemma:MatrixIRLS:Xk:optimality}, we know that the new iterate $\f{X}\hkk$ fulfills
\begin{equation*}
\begin{split}
    0&=\langle W\hk(\f{X}^{(k+1)}),\f{\eta}\hkk\rangle=\langle W^{(k)}(\f{\eta}\hkk+\f{X}^0),\f{\eta}\hkk\rangle,
\end{split}
\end{equation*}
and therefore 
\[
\left\langle \f{\eta}\hkk , W\hk(\f{\eta}\hkk)\right\rangle = - \left\langle W^{(k)}(\f{X}^0), \f{\eta}\hkk\right \rangle \leq \|W\hk(\f{X}^0)\|_{S_1} \|\f{\eta}\hkk\|_{S_\infty},
\]
using H{\"o}lder's inequality for Schatten-$p$ (quasi-)norms (cf. Theorem 11.2 of \cite{Gohberg2000}). Dividing \eqref{eq:MatrixIRLS:localconvergence:3} by $\|\f{\eta}\hkk\|_{S_\infty}$ concludes the proof of \Cref{lemma:MatrixIRLS:localconvergence:1}.
\end{proof}

In order to obtain a fast local convergence rate, it is crucial to bound $\|W\hk(\f{X}^0)\|_{S_1}$. For this, we split $\|W\hk(\f{X}^0)\|_{S_1}$ into three parts and estimate the parts separately by using the classical singular subspace perturbation result of \Cref{lemma:Wedinbound}.
\begin{lemma} \label{lemma:MatrixIRLS:localconvergence:2}
	Let $W\hk: \Rdd \to \Rdd$ be the weight operator \eqref{eq:def:W} of \Cref{def:optimalweightoperator} corresponding to $\f{X}\hk$, let $\epsilon_k=\sigma_{r+1}(\f{X}\hk) = \sigma_r\hk$ and $\f{X}^0 \in \Rdd$ be a rank-$r$ matrix. Assume that there exists $0 < \zeta < 1$ such that
\begin{equation} \label{eq:MatrixIRLS:closeness:assumption:B8}
\|\f{X}\hk - \f{X}^0\|_{S_{\infty}} \leq \zeta \sigma_r(\f{X}^0).
\end{equation}	
	Then
	\[
	\big\|W\hk(\f{X}^0)\big\|_{S_1}  \leq r(1-\zeta)^{-2} \sigma_r(\f{X}^0)^{-1} \left(1 + 4 \frac{\|\f{\eta}\hk\|_{S_\infty}}{\epsilon_k} \frac{\sigma_1(\f{X}^0)}{\sigma_r(\f{X}^0)} + 2 \frac{\|\f{\eta}\hk\|_{S_\infty}^2}{\epsilon_k^{2}} \frac{\sigma_1(\f{X}^0)}{\sigma_r(\f{X}^0)} \right).
	\]
\end{lemma}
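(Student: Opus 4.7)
The plan is to exploit the block structure of the weight operator~\cref{eq:def:W} induced by the singular subspaces of $\f{X}\hk$, together with the fact that each block of $\f{U}_k^* \f{X}^0 \f{V}_k$ inherits rank at most $r$ from $\f{X}^0$. Fix an SVD $\f{X}^0 = \f{U}_0\f{\Sigma}_0\f{V}_0^*$, write $\f{\eta}\hk := \f{X}\hk - \f{X}^0$ and $\f{\Sigma}_k := \diag(\sigma_1\hk,\dots,\sigma_r\hk)$. By unitary invariance of $\|\cdot\|_{S_1}$ and \cref{eq:def:W}, I first rewrite $\|W\hk(\f{X}^0)\|_{S_1} = \|\f{H}_k \circ \f{A}\|_{S_1}$ with $\f{A} := \f{U}_k^* \f{X}^0 \f{V}_k$, and partition $\f{A}$ into the four blocks $\f{A}_{ij}$ according to the splitting~\cref{eq:Xk:bothsvds}. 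Under the hypothesis $\epsilon_k = \sigma_{r+1}(\f{X}\hk)$, the weights on these blocks collapse to $(\sigma_i\hk\sigma_j\hk)^{-1}$, $(\sigma_i\hk\epsilon_k)^{-1}$, $(\epsilon_k\sigma_j\hk)^{-1}$, and $\epsilon_k^{-2}$, respectively, so a triangle inequality yields
\[
\|W\hk(\f{X}^0)\|_{S_1} \leq T_1 + T_2 + T_3,
\]
with $T_1 := \|\f{\Sigma}_k^{-1}\f{A}_{11}\f{\Sigma}_k^{-1}\|_{S_1}$, $T_2 := \epsilon_k^{-1}\bigl(\|\f{\Sigma}_k^{-1}\f{A}_{12}\|_{S_1} + \|\f{A}_{21}\f{\Sigma}_k^{-1}\|_{S_1}\bigr)$, and $T_3 := \epsilon_k^{-2}\|\f{A}_{22}\|_{S_1}$. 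Since each $\f{A}_{ij} = P\f{X}^0 Q$ for orthogonal projectors $P,Q$, one has $\rank(\f{A}_{ij})\leq r$ and hence $\|\f{A}_{ij}\|_{S_1} \leq r\|\f{A}_{ij}\|_{S_\infty}$.

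For $T_1$ the essential move is to exploit a cancellation from the SVD of $\f{X}\hk$: since $\f{U}^{(k)*}\f{X}\hk\f{V}^{(k)} = \f{\Sigma}_k$, one has $\f{A}_{11} = \f{\Sigma}_k - \f{U}^{(k)*}\f{\eta}\hk\f{V}^{(k)}$, so that $\f{\Sigma}_k^{-1}\f{A}_{11}\f{\Sigma}_k^{-1} = \f{\Sigma}_k^{-1} - \f{\Sigma}_k^{-1}\f{U}^{(k)*}\f{\eta}\hk\f{V}^{(k)}\f{\Sigma}_k^{-1}$. The first summand contributes $\sum_{i=1}^r (\sigma_i\hk)^{-1} \leq r/\sigma_r\hk$, while the rank-$r$ remainder is bounded by $r\|\f{\Sigma}_k^{-1}\|_{S_\infty}^2\|\f{\eta}\hk\|_{S_\infty}$. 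Weyl's inequality combined with~\cref{eq:MatrixIRLS:closeness:assumption:B8} gives $\sigma_r\hk \geq (1-\zeta)\sigma_r(\f{X}^0)$, producing the leading order $r/((1-\zeta)^2\sigma_r(\f{X}^0))$ and a remainder linear in $\|\f{\eta}\hk\|_{S_\infty}/\sigma_r(\f{X}^0)$. This subtraction is what I expect to be the main obstacle: the naive estimate $\|\f{\Sigma}_k^{-1}\f{A}_{11}\f{\Sigma}_k^{-1}\|_{S_1} \leq \|\f{\Sigma}_k^{-1}\|_{S_\infty}^2\|\f{A}_{11}\|_{S_1}$ together with the generic bound $\|\f{A}_{11}\|_{S_1}\leq r\sigma_1(\f{X}^0)$ would inflate the leading constant by a factor of $\kappa$, which would destroy the form of the claimed estimate.

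For $T_2$ and $T_3$, I would invoke Wedin's perturbation bound~(\Cref{lemma:Wedinbound}). Since $\rank(\f{X}^0) = r$, Weyl's inequality gives $\epsilon_k = \sigma_{r+1}(\f{X}\hk) \leq \|\f{\eta}\hk\|_{S_\infty} \leq \zeta\sigma_r(\f{X}^0)$, so the spectral gap between $\sigma_r(\f{X}^0)$ and $\sigma_{r+1}\hk$ is at least $(1-\zeta)\sigma_r(\f{X}^0)$; consequently $\|\f{U}_{\perp}^{(k)*}\f{U}_0\|_{S_\infty}$ and $\|\f{V}_0^*\f{V}_{\perp}^{(k)}\|_{S_\infty}$ are bounded by $\sqrt{2}\|\f{\eta}\hk\|_{S_\infty}/((1-\zeta)\sigma_r(\f{X}^0))$. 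Inserting these into the factorizations $\f{A}_{12} = (\f{U}^{(k)*}\f{U}_0)\f{\Sigma}_0(\f{V}_0^*\f{V}_{\perp}^{(k)})$, its transpose analogue for $\f{A}_{21}$, and $\f{A}_{22} = (\f{U}_{\perp}^{(k)*}\f{U}_0)\f{\Sigma}_0(\f{V}_0^*\f{V}_{\perp}^{(k)})$, and using $\|\f{\Sigma}_k^{-1}\|_{S_\infty}\leq 1/((1-\zeta)\sigma_r(\f{X}^0))$ together with $\|\f{A}_{ij}\|_{S_1}\leq r\|\f{A}_{ij}\|_{S_\infty}$, I obtain $T_2 \leq 2\sqrt{2}\,r\kappa\|\f{\eta}\hk\|_{S_\infty}/((1-\zeta)^2\sigma_r(\f{X}^0)\epsilon_k)$ and $T_3 \leq 2r\kappa\|\f{\eta}\hk\|_{S_\infty}^2/((1-\zeta)^2\sigma_r(\f{X}^0)\epsilon_k^2)$. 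Summing $T_1 + T_2 + T_3$ and using $\epsilon_k\leq\sigma_r(\f{X}^0)$ and $\kappa\geq 1$ to absorb the linear-in-$\|\f{\eta}\hk\|_{S_\infty}$ remainder of $T_1$ into the $T_2$ contribution then yields the announced estimate.
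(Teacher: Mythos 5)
Your proof is correct and follows essentially the same route as the paper: the same three-block decomposition of $\f{H}_k \circ (\f{U}_k^*\f{X}^0\f{V}_k)$, the same crucial cancellation $\f{U}^{(k)*}\f{X}^0\f{V}^{(k)} = \f{\Sigma}^{(k)} - \f{U}^{(k)*}\f{\eta}^{(k)}\f{V}^{(k)}$ to avoid a $\kappa$-inflated leading term, one application of Wedin for the off-diagonal blocks, two for the bottom-right block, and Weyl to get $\sigma_r^{(k)}\geq(1-\zeta)\sigma_r(\f{X}^0)$. The only cosmetic differences are that you reduce $\|\cdot\|_{S_1}$ via $r\|\cdot\|_{S_\infty}$ on rank-$r$ blocks where the paper uses $\sqrt{r}\|\cdot\|_F$, and that you carry $\|\f{\eta}^{(k)}\|_{S_\infty}$ explicitly in the $T_1$ remainder (requiring the small absorption into the linear term, which does go through since $\epsilon_k \leq \sigma_r(\f{X}^0) \leq \kappa\sigma_r(\f{X}^0)$ and $4-2\sqrt{2}>1$), whereas the paper bounds it by $\zeta\sigma_r(\f{X}^0)$ up front so that $(1-\zeta)+\zeta$ telescopes directly to the claimed leading constant.
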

\begin{proof}
Recalling the notation $\sigma_{\ell}\hk = \sigma_{\ell}(\f{X}\hk)$ for the $\ell$-th singular value of $\f{X}\hk$ and the decomposition 
\begin{equation} \label{eq:Hk:blockstructure}
\f{H}_k = \begin{bmatrix}
\f{H}\hk &  \f{H}_{1,2}\hk \\
\f{H}_{2,1}\hk & \epsilon_k^{-2} \mathbf{1}
\end{bmatrix} 
\end{equation}
of \eqref{eq:Wk:simplification:1}, we bound the entries of the different blocks $\f{H}\hk$, $\f{H}_{1,2}\hk$ and $\f{H}_{2,1}\hk$ separately. 

Since $(\f{H}_k)_{ij} = \Big(\max(\sigma_i^{(k)},\epsilon_k) \max(\sigma_j^{(k)},\epsilon_k)\Big)^{-1}$ for each $i \in [d_1]$ and $j \in [d_2]$ due to definition of $\f{H}_k$, we observe that

\begin{equation} \label{eq:MatrixIRLS:localconvergence:Hbound:1}
\max_{i \in [r], j \in [r]}(\f{H}\hk)_{ij} \leq (\sigma_r\hk)^{-2},
\end{equation}
and
\begin{equation} \label{eq:MatrixIRLS:localconvergence:Hbound:2}
\max\left(\max_{i,j}((\f{H}_{1,2}\hk)_{ij}),\max_{i,j}((\f{H}_{2,1}\hk)_{ij}) \right) = \max_{i \in [r], r+1 \leq j \leq d_2} (\f{H}_{1,2}\hk)_{ij}  \leq (\sigma_r\hk)^{-1} \epsilon_k^{-1},
\end{equation}

In view of these entrywise bounds on the submatrices of $\f{H}_k$ and $\f{H}_2\hk$, we compute, using \eqref{eq:Wk:simplification:1}, that

\begin{equation*}
\begin{split}
&\big\|W\hk(\f{X}^0)\big\|_{S_1} = \left\|  \begin{bmatrix} 
	\f{U}\hk & \f{U}_{\perp}\hk
\end{bmatrix}
\left(
\begin{bmatrix}
\f{H}\hk &  \f{H}_{1,2}\hk \\
\f{H}_{2,1}\hk & \epsilon_k^{-2} \mathbf{1}
\end{bmatrix} 
\circ 
\begin{bmatrix}
\f{U}^{(k)*} \f{X}^0 \f{V}^{(k)} &  \f{U}^{(k)*} \f{X}^0 \f{V}_{\perp}^{(k)} \\
\f{U}_{\perp}^{(k)*} \f{X}^0 \f{V}^{(k)} & \f{U}_{\perp}^{(k)*} \f{X}^0 \f{V}_{\perp}^{(k)} 
\end{bmatrix}
\right)
\begin{bmatrix} 
	\f{V}^{(k)*} \\ \f{V}_{\perp}^{(k)*}
\end{bmatrix}  \right\|_{S_1} \\
 &\leq \left\| \f{U}\hk[\f{H}\hk \circ  (\f{U}^{(k)*} \f{X}^0 \f{V}^{(k)})]\f{V}^{(k)*}\right\|_{S_1} + \left\|\f{U}_k
\begin{bmatrix}
0 &  \f{H}_{1,2}\hk \circ (\f{U}^{(k)*} \f{X}^0 \f{V}_{\perp}^{(k)}) \\
\f{H}_{2,1}\hk \circ (\f{U}_{\perp}^{(k)*} \f{X}^0 \f{V}^{(k)}) & 0
\end{bmatrix}
\f{V}_k^{*}
\right\|_{S_1} \\
 &+ \epsilon_k^{-2} \left\| \f{U}_{\perp}^{(k)}\f{U}_{\perp}^{(k)*} \f{X}^0 \f{V}_{\perp}^{(k)}\f{V}_{\perp}^{(k)*}\right\|_{S_1}  =: \textsc{(I)} + \textsc{(II)} + \textsc{(III)}.
\end{split}
\end{equation*}
We now bound the terms \textsc{(I)}, \textsc{(II)} and \textsc{(III)} separately.

First, we see that
\[
\begin{split}
\textsc{(I)} &= \left\|\f{H}\hk \circ  (\f{U}^{(k)*} \f{X}^0 \f{V}^{(k)})\right\|_{S_1} \leq \sqrt{r} \left\|\f{H}\hk \circ  (\f{U}^{(k)*} \f{X}^0 \f{V}^{(k)})\right\|_{F} \\
&\leq \sqrt{r} \left\|\f{H}\hk \circ  (\f{U}^{(k)*} \f{X}\hk \f{V}^{(k)})\right\|_{F} + \sqrt{r}  \left\|\f{H}\hk \circ  (\f{U}^{(k)*} \eta\hk \f{V}^{(k)})\right\|_{F} 
 \\
 &\leq \sqrt{r} \left\| \f{H}\hk \circ \f{\Sigma}\hk \right\|_{F} + \sqrt{r} (\sigma_r\hk)^{-2} \|\f{U}^{(k)*} \f{\eta}\hk \f{V}^{(k)}\|_F,
 \end{split}
 \]
 where we used the Cauchy-Schwarz inequality in the first inequality, the notation $\f{\eta}\hk= \f{X}\hk - \f{X}^0$ and the triangle inequality in the second inequality, and finally, \eqref{eq:MatrixIRLS:localconvergence:Hbound:1} in the third inequality. $\Sigma\hk \R^{r \times r}$ is here as in \eqref{eq:Xk:bothsvds}.
 
 Since
 \[
\left\| \f{H}\hk \circ \f{\Sigma}\hk \right\|_{F} = \left(\sum_{i=1}^r (\sigma_i\hk)^{-2} \right)^{1/2} \leq  \sqrt{r}(\sigma_r\hk)^{-1}
 \]
 and 
 \[
 \|\f{U}^{(k)*} \f{\eta}\hk \f{V}^{(k)}\|_F\leq \sqrt{r} \|\f{U}^{(k)*} \f{\eta}\hk \f{V}^{(k)}\|_{S_{\infty}} \leq \sqrt{r} \|\f{\eta}\hk\|_{S_{\infty}} \leq \sqrt{r} \zeta \sigma_r(\f{X}^0)
 \]
from assumption \eqref{eq:MatrixIRLS:closeness:assumption:B8}, it follows then that
\[
\textsc{(I)} \leq r (\sigma_r\hk)^{-2} \left( \sigma_r\hk+ \zeta \sigma_r(\f{X}^0)\right). 
\] 
We can use the proximity assumption \eqref{eq:MatrixIRLS:closeness:assumption:B8} further to get rid of the dependence on $k$ in the bound, as
\[
\sigma_r(\f{X}^0)= \sigma_r(\f{X}\hk-\f{\eta}\hk ) \leq \sigma_r\hk+ \sigma_{1}(\f{\eta}\hk) = \sigma_r\hk + \|\f{\eta}\hk\|_{S_{\infty}} \leq    \sigma_r\hk+ \zeta \sigma_r(\f{X}^0),
\]
using $\sigma_{i+j-1}(\f{A})\leq \sigma_i(\f{A}+\f{B})+\sigma_j(\f{B})$ for any $i,j$ (cf. Theorem 3.3.16 of \cite{horn_johnson}) with $\f{A}+\f{B} = X^{(k)}-\eta^{(k)}$ and $\f{B}= \eta^k$ so that
\begin{equation} \label{eq:MatrixIRLS:localconvergence:sigmarXk:bound}
\sigma_r\hk \geq (1-\zeta) \sigma_r(\f{X}^0),
\end{equation}
and hence
\begin{equation} \label{eq:MatrixIRLS:localconvergence:summandI:bound}
\textsc{(I)}  \leq  r \sigma_r(\f{X}^0)^{-2} (1-\zeta)^{-2} \left( \sigma_r(\f{X}^0) (1 -\zeta)+ \zeta \sigma_r(\f{X}^0)\right) = r (1-\zeta)^{-2} \sigma_r(\f{X}^0)^{-1}. 
\end{equation}

For the term \textsc{(II)}, we compute that
\begin{equation*}
\begin{split}
\textsc{(II)} &\leq \sqrt{2r} 
\left\|
\begin{bmatrix}
0 &  \f{H}_{1,2}\hk \circ (\f{U}^{(k)*} \f{X}^0 \f{V}_{\perp}^{(k)}) \\
\f{H}_{2,1}\hk \circ (\f{U}_{\perp}^{(k)*} \f{X}^0 \f{V}^{(k)}) & 0
\end{bmatrix}
\right\|_{F}
\\
&\leq \sqrt{2r}  (\sigma_r\hk)^{-1} \epsilon_k^{-1} \left(\left\| \f{U}^{(k)*} \f{X}^0 \f{V}_{\perp}^{(k)}\right\|_F + \left\| \f{U}_{\perp}^{(k)*} \f{X}^0 \f{V}^{(k)} \right\|_{F}\right) \\
&\leq \sqrt{2r} (\sigma_r\hk)^{-1} \epsilon_k^{-1} \left( \|\f{U}^{(k)*} \f{U}_0 \f{\Sigma}_0\|_F \|\f{V}_0^{*} \f{V}_{\perp}^{(k)}\|_{S_\infty} + \|\f{U}_{\perp}^{(k)*}\f{U}_0 \|_{S_{\infty}} \|\f{\Sigma}_0 \f{V}_0^{*}\f{V}^{(k)} \|_{F}\right),
\end{split}
\end{equation*}
using the singular value decomposition $\f{X}^0 = \f{U}_0 \f{\Sigma}_0 \f{V}_0^{*}$ of the rank-$r$ matrix $\f{X}^0$ with $\f{U}_0 \in \R^{d_1 \times r}$, $\f{V}_0 \in \R^{d_2 \times r}$. This allows us to use the singular subspace perturbation result of \Cref{lemma:Wedinbound}, so that $\|\f{V}_0^{*} \f{V}_{\perp}^{(k)}\|_{S_\infty}$ and $\|\f{U}_{\perp}^{(k)*}\f{U}_0 \|_{S_{\infty}}$ can compensate for the negative power of the $\epsilon_k$, avoiding a blow-up of term \textsc{(II)}: Indeed, using \Cref{lemma:Wedinbound} with $\f{X}=\f{X}^0$, $\widehat{\f{X}}=\f{X}\hk$, $\alpha = \sigma_r(\f{X}^0)$ and $\delta = (1-\zeta) \sigma_r(\f{X}^0)$ results in
\[
\max(\|\f{V}_0^{*} \f{V}_{\perp}^{(k)}\|_{S_\infty},\|\f{U}_{\perp}^{(k)*}\f{U}_0 \|_{S_{\infty}}) \leq \frac{\sqrt{2}\|\f{\eta}\hk\|_{S_{\infty}}}{(1-\zeta)\sigma_r(\f{X}^0)},
\]
and since $\|\f{U}^{(k)*} \f{U}_0 \f{\Sigma}_0\|_F \leq \|\f{\Sigma}_0\|_F \leq \sqrt{r} \sigma_1(\f{X}^0)$, 
$\|\f{\Sigma}_0 \f{V}_0^{*}\f{V}^{(k)}\|_F \leq \sqrt{r} \sigma_1(\f{X}^0)$, we obtain with \eqref{eq:MatrixIRLS:localconvergence:sigmarXk:bound} that
\begin{equation} \label{eq:MatrixIRLS:localconvergence:summandII:bound}
\textsc{(II)}  \leq 4r (1-\zeta)^{-2} \sigma_r(\f{X}^0)^{-1} \frac{\|\f{\eta}\hk\|_{S_\infty}}{\epsilon_k}\frac{\sigma_1(\f{X}^0)}{\sigma_r(\f{X}^0)}.
\end{equation}

It remains to bound the last term \textsc{(III)}. For \textsc{(III)}, we can use the subspace perturbation lemma \emph{twice} in the same summand such that
\begin{equation} \label{eq:MatrixIRLS:localconvergence:summandIII:bound}
\begin{split}
\textsc{(III)} &= \epsilon_k^{-2} \left\| \f{U}_{\perp}^{(k)}\f{U}_{\perp}^{(k)*} \f{X}^0 \f{V}_{\perp}^{(k)}\f{V}_{\perp}^{(k)*}\right\|_{S_1} = \epsilon_k^{-2} \|\f{U}_{\perp}^{(k)*} \f{X}^0 \f{V}_{\perp}^{(k)}\|_{S_1} \leq \sqrt{r} \epsilon_k^{-2} \|\f{U}_{\perp}^{(k)*} \f{X}^0 \f{V}_{\perp}^{(k)}\|_{F} \\
&\leq \sqrt{r} \epsilon_k^{-2} \|\f{U}_{\perp}^{(k)*} \f{U}_0\|_{S_\infty} \|\f{\Sigma}_0\|_F \|\f{V}_0^{*} \f{V}_{\perp}^{(k)}\|_{S_\infty} \\
&\leq \sqrt{r} \epsilon_k^{-2} \frac{\sqrt{2} \|\f{\eta}\hk\|_{S_\infty}}{(1-\zeta)\sigma_r(\f{X}^0)} \sqrt{r} \sigma_1(\f{X}^0)  \frac{\sqrt{2} \|\f{\eta}\hk\|_{S_\infty}}{(1-\zeta)\sigma_r(\f{X}^0)} = 2r (1-\zeta)^{-2}  \sigma_r(\f{X}^0)^{-1} \frac{\|\f{\eta}\hk\|_{S_\infty}^2}{\epsilon_k^2} \frac{\sigma_1(\f{X}^0)}{\sigma_r(\f{X}^0)}.
\end{split}
\end{equation}
Combining \eqref{eq:MatrixIRLS:localconvergence:summandI:bound,eq:MatrixIRLS:localconvergence:summandII:bound,eq:MatrixIRLS:localconvergence:summandIII:bound} finally yields the statement of \Cref{lemma:MatrixIRLS:localconvergence:2}.
\end{proof}

\subsection{Wrapping up the proof} \label{section:wrappingproof}
We can now put \Cref{lemma:MatrixIRLS:localRIP}, \Cref{lemma:MatrixIRLS:localconvergence:1} and \Cref{lemma:MatrixIRLS:localconvergence:2} together to prove the local convergence statement of \Cref{cor:MatrixIRLS:localconvergence:matrixcompletion}, showing also that we attain locally quadratic convergence.
\begin{proof}[Proof of \Cref{cor:MatrixIRLS:localconvergence:matrixcompletion}]
	
Let $k = k_0$ and $\f{X}\hk$ be the $k$-th iterate of \texttt{MatrixIRLS} with the parameters stated in \Cref{cor:MatrixIRLS:localconvergence:matrixcompletion}. Under the sampling model of \Cref{cor:MatrixIRLS:localconvergence:matrixcompletion}, if the number of samples $m$ fulfills $m \geq C \mu_0 r (d_1 + d_2) \log(d_1 + d_2)$, where $C$ is the constant of \Cref{lemma:MatrixIRLS:localRIP}, we know from \Cref{lemma:MatrixIRLS:localRIP} that with a probability of at least $1 - 2 D^{-2}$, inequality \eqref{eq:localcvg:ass:1} is satisfied with $c(\mu_0,r,d_1,d_2) =  \sqrt{\frac{\widetilde{C} d \log(D)}{\mu_0 r}}$, if furthermore $\eta\hk:=\f{X}\hk - \f{X}^0$ fulfills
\begin{equation} \label{eq:proximitybound}
\| \eta\hk \|_{S_\infty} \leq \xi \sigma_r(\f{X}^0)
\end{equation}
with 
\begin{equation} \label{eq:zeta:proof41:1}
\xi \leq C_1 \sqrt{\frac{\mu_0 r}{d}},
\end{equation}
and thus, by \Cref{lemma:MatrixIRLS:localconvergence:1},
\begin{equation} \label{eq:proximitybound2}
\|\f{X}\hkk - \f{X}^0\|_{S_\infty} \leq \frac{\widetilde{C} d \log(D)}{\mu_0 r} \epsilon_k^{2} \|W\hk(\f{X}^0)\|_{S_1}.
\end{equation}
We denote the event that this is fulfilled by $E$. Furthermore, on this event, if $\xi \leq 1/2$ in \eqref{eq:proximitybound} and denoting the condition number by $\kappa=\sigma_1(\f{X}^0)/\sigma_r(\f{X}^0)$, it follows from \Cref{lemma:MatrixIRLS:localconvergence:2} that 
\[
\|\f{X}\hkk - \f{X}^0\|_{S_\infty}  \leq \frac{\widetilde{C} d \log(D)}{\mu_0}  4 \sigma_r(\f{X}^0)^{-1} \left(\epsilon_k^2 + 4 \epsilon_k  \|\f{\eta}\hk\|_{S_\infty} \kappa + 2 \|\f{\eta}\hk\|_{S_\infty}^2
\kappa \right)
\]
Furthermore, if $\f{X}_r\hk \in \Rdd$ denotes the best rank-$r$ approximation of $\f{X}\hk$ in any unitarily invariant norm, we estimate that
\[
\epsilon_k \leq \sigma_{r+1}(\f{X}\hk) =\|\f{X}\hk-\f{X}_r\hk\|_{S_\infty} \leq \|\f{X}\hk-\f{X}^0\|_{S_\infty} = \|\eta\hk\|_{S_\infty},
\]
Inserting these two bounds into \eqref{eq:proximitybound2}, we obtain
\[
\|\eta\hkk\|_{S_\infty} = \|\f{X}\hkk - \f{X}^0\|_{S_\infty} \leq \frac{\widetilde{C} d \log(D)}{\mu_0}  4  \sigma_r(\f{X}^0)^{-1} \left(1 + 6 \kappa \right) \|\eta\hk\|_{S_\infty}^2.
\]
Finally, if, additionally, \eqref{eq:proximitybound} is satisfied for 
\begin{equation} \label{eq:zeta:proof41:2}
\xi \leq \frac{\mu_0}{4(1+6 \kappa) d \log(D) \widetilde{C}},
\end{equation}
we conclude that
\[
\| \eta\hkk \|_{S_\infty} < \| \eta\hk \|_{S_\infty}
\]
and also, we observe a quadratic decay in the spectral error such that
\[
\| \eta\hkk \|_{S_\infty} \leq \mu \| \eta\hk \|_{S_\infty}^2
\]
with a constant $\mu = \frac{4 \widetilde{C} d \log(D) \left(1 + 6 \kappa \right) }{\mu_0 \sigma_r(\f{X}^0)}$. This shows inequality \eqref{eq:MatrixIRLS:closeness:assumption} of \Cref{cor:MatrixIRLS:localconvergence:matrixcompletion}.

To show the remaining statement, we can use \Cref{lemma:etaksigmarp1Xk} to show that if $\f{X}\hk$ is close enough to $\f{X}^0$, we can ensure that the $(r+1)$-st singular value $\sigma_{r+1}(\f{X}\hk)$ of the current iterate is strictly decreasing. More precisely, assume now the stricter assumption of 
\begin{equation} \label{eq:condition:etaksigmarp1Xk}
\|\eta\hk\|_{S_\infty} \leq \sqrt{\frac{\mu_0 r}{4 \widetilde{C} d (d-r) \log(D)}} \xi  \sigma_r(\f{X}^0).
\end{equation}
In fact, if $\xi$ fulfills \eqref{eq:zeta:proof41:1} and \eqref{eq:zeta:proof41:2}, we can conclude that on the event $E$, 
\[
\begin{split}
\sigma_{r+1}(\f{X}\hkk) &\leq \|\eta\hkk\|_{S_\infty} \leq  \frac{\widetilde{C} d \log(D)}{\mu_0}  4  \sigma_r(\f{X}^0)^{-1} \left(1 + 6 \kappa \right) \|\eta\hk\|_{S_\infty}\cdot\|\eta\hk\|_{S_\infty}  \\
&< \frac{\widetilde{C} d \log(D)}{\mu_0}  4  \sigma_r(\f{X}^0)^{-1} \left(1 + 6 \kappa \right) \sqrt{\frac{\mu_0 r}{4 \widetilde{C} d (d-r) \log(D)}}  \xi \sigma_r(\f{X}^0)   \\
&\cdot \sqrt{\frac{4 \widetilde{C} d (d-r) \log(D)}{\mu_0 r}}  \sigma_{r+1}(\f{X}\hk) \leq  \sigma_{r+1}(\f{X}\hk)
\end{split}
\]
using \Cref{lemma:etaksigmarp1Xk} for one factor $\|\eta\hk\|_{S_\infty}$ and \eqref{eq:condition:etaksigmarp1Xk} for the other factor $\|\eta\hk\|_{S_\infty}$ in the third inequality, and \eqref{eq:zeta:proof41:2} in the last inequality. Taking the update rule \eqref{eq:MatrixIRLS:epsdef} for the smoothing parameter into account, this implies that $\epsilon_{k+1} = \sigma_{r+1}(\f{X}\hkk)$, which ensures that the first statement of \Cref{cor:MatrixIRLS:localconvergence:matrixcompletion} is fulfilled likewise for iteration $k+1$. By induction, this implies that $\f{X}^{(k+\ell)} \xrightarrow{\ell \to \infty} \f{X}^0$, which finishes the proof of \Cref{cor:MatrixIRLS:localconvergence:matrixcompletion}.
\end{proof}

The presented proof of  \Cref{cor:MatrixIRLS:localconvergence:matrixcompletion} has certain similarities with the proof of \emph{local superlinear convergence} of Theorem 11 in \cite{KS18} for a related IRLS algorithm designed for Schatten-$p$ quasi-norm minimization. However, that proof is not applicable to the matrix completion setting, and furthermore, is not extendable to a log-determinant objective as used in this paper. As observed in \cite{KS18}, it is not possible to obtain superlinear (or quadratic) convergence rates for the IRLS methods of \cite{Fornasier11,Mohan10}.

\section{Proof of \Cref{thm:wellconditioning}} \label{sec:proof:wellconditioning}
In this section, we provide a result about the \emph{spectrum} of the system matrix 
\begin{equation} \label{eq:systemmatrix:good}
\f{A} := \f{D}_k + P_{T_k}^* P_{\Omega}^* P_{\Omega}  P_{T_k} := \epsilon_k^{2} \left(\f{D}_{S_k}^{-1}- \epsilon_k^{2} \f{I}_{S_k}\right)^{-1} + P_{T_k}^* P_{\Omega}^* P_{\Omega}  P_{T_k}
\end{equation}
of \eqref{eq:gamma:system} in \Cref{algo:MatrixIRLS:mainstep:implementation}. We recall that solving a linear system with $\f{A}$ constitutes the main computational step in our implementation of \texttt{MatrixIRLS}. 

It is well-known that the shape of the spectrum of the system matrix $\f{A}$ plays an important role in the convergence of the conjugate gradient iterations. In particular, the CG method terminates after $\ell$ iterations (in exact arithmetic) if $\f{A}$ has $\ell$ distinct eigenvalues (cf. Theorem 5.4 of \cite{NocedalWright06}) and a bound on the error $\f{\gamma}_{\ell} - \f{\gamma}^*$ of the $\ell$-th iterate $\f{\gamma}_{\ell}$ to the exact solution $\f{\gamma}^*$ of the linear system (5.36) of \cite{NocedalWright06} can be provided by
\[
	\left\langle \f{\gamma}_{\ell}- \f{\gamma}^*, \f{A}(\f{\gamma}_{\ell}- \f{\gamma}^*)\right\rangle  \leq 2 \left(\frac{\sqrt{\kappa(\f{A})}-1}{\sqrt{\kappa(\f{A})}+1} \right)^{\ell} \left\langle \f{\gamma}_{0}- \f{\gamma}^*, \f{A}(\f{\gamma}_{0}- \f{\gamma}^*)\right\rangle,
\]
where $\kappa(\f{A}):= \lambda_{\max}(\f{A})/\lambda_{\min}(\f{A})$ is the condition number of $\f{A}$.

It has been a common problem for IRLS methods that the linear systems to be solved become ill-conditioned close to the desired (low-rank or sparse, depending on the problem) solution \cite{Daubechies10,Fornasier11,Fornasier16}. Close to the solution the smoothing parameter $\epsilon_k$ is typically very small, resulting in ``very large weights'' on large parts of the domain induced by the quadratic form
\[
\langle \f{X},W\hk(\f{X})\rangle.
\]  
For the sparse recovery problem, it has been observed \cite{Voronin12} that this blow-up can be a problem for an inexact solver of the weighted least squares system, and in \cite{Fornasier16}, an analysis was pursued for an IRLS algorithm for the sparse recovery problem about with which precision the linear system for each outer iteration $k$ needs to be solved by a conjugate gradient method to ensure overall convergence.

However, the underlying issue of bad conditioning of the IRLS system matrices was not addressed or solved in \cite{Fornasier16} (see Section 5.2 of \cite{Fornasier16} for a discussion). 

\Cref{thm:wellconditioning}, which we now show, argues that by computing the weighted least squares update via \Cref{algo:MatrixIRLS:mainstep:implementation}, these issues do not arise for \texttt{MatrixIRLS} in the same manner.

\begin{proof}[{Proof of \Cref{thm:wellconditioning}}]
	Recall the definitions $\f{D}_k = \epsilon_k^{2} \left(\f{D}_{S_k}^{-1}- \epsilon_k^{2} \f{I}_{S_k}\right)^{-1}$ and $\f{A}= \f{D}_k + P_{T_k}^* P_{\Omega}^* P_{\Omega}  P_{T_k}$. We know that the eigenvalues of $\f{D}_{S_k}^{-1}$ are just the inverses of the entries of the matrices $\f{H}\hk$, $\f{H}_{1,2}\hk$ and $\f{H}_{2,1}\hk$ in the block decomposition of the matrix $\f{H}_k \in \Rdd$ that defines the weight operator $W\hk$. By \eqref{eq:MatrixIRLS:localconvergence:Hbound:1} and \eqref{eq:MatrixIRLS:localconvergence:Hbound:2}, we can lower bound these eigenvalues by $\sigma_r(\f{X}\hk) \epsilon_k=\sigma_r\hk \epsilon_k$, and therefore,
	
	\begin{equation} \label{eq:spectrum:Dk}
	\|\f{D}_{k}\|_{S_\infty} \leq \frac{\epsilon_k^{2}}{\sigma_r(\f{X}\hk) \epsilon_k-\epsilon_k^{2}}  = \frac{\epsilon_k}{\sigma_r(\f{X}\hk) -\epsilon_k} \leq \frac{\epsilon_k}{3/4\sigma_r(\f{X}_0) - \epsilon_k},
	\end{equation}
	using that $\sigma_r(\f{X}\hk) \geq (1- 1/4) \sigma_r(\f{X}_0)$ since $\|\f{X}\hk - \f{X}_0\|_{S_\infty} \leq \frac{1}{4} \sigma_{r}(\f{X}_0)$, see also \eqref{eq:MatrixIRLS:localconvergence:sigmarXk:bound}. Also, since $\epsilon_k= \sigma_{r+1}(\f{X}\hk) \leq \|\f{X}\hk - \f{X}_0\|_{S_\infty} \leq \frac{1}{4} \sigma_{r}(\f{X}_0)$ and further $\epsilon_k= \sigma_{r+1}(\f{X}\hk) \leq C_1 \left(\frac{\mu_0 r}{d}\right)\sigma_{r}(\f{X}_0)$, we have that
	\[
	\frac{\epsilon_k}{(3/4\sigma_r(\f{X}_0))-\epsilon_k} \leq \frac{C_1 \frac{\mu_0 r}{d} }{(3/4-1/4)} \frac{\sigma_r(\f{X}_0)}{\sigma_r(\f{X}_0)} \leq 2 C_1\frac{\mu_0 r}{d}.
	\]
	This implies that
	\[
	0 \leq \lambda_{\min}(\f{D}_k) \leq \lambda_{\max}(\f{D}_k)= \|\f{D}_{k}\|_{S_\infty} \leq 2 C_1 \frac{\mu_0 r}{d} \leq 2 C_1 \frac{m}{C d D \log(D) } \leq \frac{m}{d_1 d_2},
	\]
	if the constant $C_1 >0$ is small enough, using the lower bound on the sample complexity $m$.
	
	The second summand in $\f{A}$, the matrix $P_{T_k}^* P_{\Omega}^*P_{\Omega} P_{T_k}$, is positive semidefinite already due to its factorized form. We note that by following the proof of \Cref{cor:MatrixIRLS:localconvergence:matrixcompletion}, we see that under the assumptions of \Cref{thm:wellconditioning}, we have that for $\y{P}_{T_k}: \Rdd \to \Rdd, \f{Z} \mapsto P_{T_k} P_{T_k}^*(\f{Z})$ and $\y{P}_{\Omega}:\Rdd \to \Rdd, \f{Z} \mapsto P_{\Omega}^* P_{\Omega}(\f{Z})$,
	\[
	\begin{split}
	&\frac{d_1 d_2}{m} \left\|P_{T_k} \left[ P_{T_k}^*P_{\Omega}^*P_{\Omega} P_{T_k}- \frac{m}{d_1 d_2} \f{I} \right] P_{T_k}^*\right\|_{S_\infty} = \left\| \frac{d_1 d_2}{m} P_{T_k} P_{T_k}^*P_{\Omega}^*P_{\Omega} P_{T_k} P_{T_k}^*-P_{T_k} P_{T_k}^*\right\|_{S_\infty} \\
	&=\left\| \frac{d_1 d_2}{m} \y{P}_{T}\y{P}_{\Omega}\y{P}_{T}- \y{P}_{T}\right\|_{S_\infty}
\leq \frac{4}{10}
	\end{split}
	\]
	on an event $E$ that holds with high probability.
	
	As $ P_{T_k}$ is a matrix with orthonormal columns such that $P_{T_k}^* P_{T_k} = \f{I}$, this implies that 
	\[
	 \left\|P_{T_k}^*P_{\Omega}^*P_{\Omega} P_{T_k}- \frac{m}{d_1 d_2} \f{I}\right\|_{S_\infty} \leq \frac{4 m}{10 d_1 d_2}
	\]
	on the event $E$. Thus, the bound on the spectrum of $\f{A}$ follows from this and \eqref{eq:spectrum:Dk} since
	\[
	\begin{split}
	\left\| \f{A}- \frac{3}{2} \frac{m}{d_1 d_2} \f{I} \right\|_{S_\infty} &= \left\| \f{D}_k + P_{T_k}^* P_{\Omega}^*P_{\Omega} P_{T_k}- \frac{3}{2} \frac{m}{d_1 d_2} \f{I} \right\|_{S_\infty} \leq \left\| \f{D}_k - \frac{1}{2} \frac{m}{d_1 d_2} \f{I} \right\|_{S_\infty} + \left\| P_{T_k}^* P_{\Omega}^*P_{\Omega} P_{T_k} -  \frac{m}{d_1 d_2} \f{I} \right\|_{S_\infty}  \\
	&\leq \frac{1}{2} \frac{m}{d_1 d_2} + \frac{4}{10}  \frac{m}{d_1 d_2} = \frac{9}{10}  \frac{m}{d_1 d_2}. 
	\end{split}
	\]
	The condition number bound follows immediately since $\kappa(\f{A})= \frac{\lambda_{\max}(\f{A})}{\lambda_{\min}(\f{A})} \leq 4$.
\end{proof}

As a summary, since \Cref{thm:wellconditioning} gives a bound on the condition number of the linear system matrix $\f{A}$  that is a \emph{small constant}, the theory of the conjugate gradient methods suggests that very good solutions can be found already after few, in particular, after 
\[
N_{\text{CG\_inner}} = \text{cst.}
\]
CG iterations (where \text{cst.} is small), for each IRLS iteration, at least in the neighborhood of a low-rank matrix $\f{X}_0$ that is compatible with the measurements.

 Taking into account the statement of \Cref{thm:MatrixIRLS:computationalcost:Xkk}, this suggests that at least locally, a new iterate $\f{X}\hkk$ can be calculated with a time complexity of 
\[
O \left( (m r + r^2 D) \cdot N_{\text{CG\_inner}} \right) = O \left( m r + r^2 D \right).
\]

\section{Remarks to MatrixIRLS as a saddle-escaping smoothing Newton method} \label{sec:remarks:smoothing:newton}
We briefly elaborate on the interpretation of \texttt{MatrixIRLS} as a saddle-escaping smoothing method.

If $\epsilon_k > 0$ and if $F_{\epsilon_k}:\Rdd \to R$ is the $\epsilon_k$-smoothed log-det objective of \eqref{eq:smoothing:Fpeps}, it can be shown that $F_{\epsilon_k}$ is continuously differentiable with $\epsilon_k^{-2}$-Lipschitz gradient 
 $\nabla F_{\epsilon_k}(\f{X}) = \f{U} \dg \bigg(\frac{\sigma_i(\f{X})}{\max(\sigma_i(\f{X}),\epsilon_k)^{2}}\bigg)_{i=1}^d \f{V}^*$
for any matrix $\f{X}$ with singular value decomposition $\f{X} = \f{U} \dg\big(\sigma(\f{X})\big) \f{V}^* = \f{U} \dg\big(\sigma \big) \f{V}^*$. This can be shown by using results from \cite{Lewis05_Nonsm1,AnderssonCarlssonPerfekt16}. Additionally, it holds that $\nabla F_{\epsilon_k}$ is differentiable at $\f{X}$ if and only if the second derivative $f_{\epsilon_k}'':\R \to \R$ of $f_{\epsilon_k}$ from \eqref{eq:smoothing:Fpeps} exists at all $\sigma=\sigma_i(\f{X})$, $i \in [d]$, which is the case if $\f{X} \in \mathcal{D}_{\epsilon_k}:=\big\{ \f{X}: \sigma_i (\f{X}) \neq \epsilon_k \text{ for all } i \in [d] \big\}$. The latter statement follows from the calculus of \emph{non-Hermitian L\"owner functions} \cite{Yang09,Ding18}, also called \emph{generalized matrix functions} \cite{Noferini17}, as $\f{X} \mapsto \nabla F_{\epsilon_k}(\f{X})$ is such a function.

Let now $\f{X}\hk \in \mathcal{D}_{\epsilon_k}:=\big\{ \f{X}: \sigma_i (\f{X}) \neq \epsilon_k \text{ for all } i \in [d] \big\}$ with singular value decomposition as in \eqref{eq:Xk:bothsvds}, and $r_k := |\{i \in [d]: \sigma_i(\f{X}\hk) > \epsilon_k\}| = |\{i \in [d]: \sigma_i\hk > \epsilon_k\}|$. In this case, it can be calculated that the Hessian $\nabla^2 F_{\epsilon_k}(\f{X}\hk)$ at $\f{X}\hk$, which is a function that maps $\Rdd$ to $\Rdd$ matrices, satisfies in the case of $d_1=d_2$
\begin{equation} \label{eq:smoothedranksurrogate:Hessianformula}
\nabla^2 F_{\epsilon_k}(\f{X}\hk)(\f{Z}) =  \f{U}_k \begin{bmatrix}\f{M}^{\text{S}} \circ S(\f{U}_k^* \f{Z} \f{V}_{k})+ 	
	\f{M}^{\text{T}} \circ T(\f{U}_k^* \f{Z} \f{V}_k) \end{bmatrix}
		 \f{V}_k^*, 
\end{equation}
for any $\f{Z} \in \Rdd$, where $S:\R^{d \times d} \to \R^{d \times d}$ and  $T:\R^{d \times d} \to \R^{d \times d}$ are the \emph{symmetrization operator} and  \emph{antisymmetrization operator}, respectively, that map any $\f{X} \in \R^{d \times d}$ to
\begin{equation*} 
	S(\f{X})=\frac{1}{2}(\f{X} + \f{X}^*), \quad \text{ and } \quad T(\f{X})= \frac{1}{2}(\f{X} - \f{X}^*)
\end{equation*}
for any $\f{X} \in \R^{d \times d}$, and  $\f{M}^{\text{S}}, \f{M}^{\text{T}}  \in \Rdd$ fulfill 
\[
\f{M}^{\text{S}}=
\left[
\begin{array}{c|c}
-\f{H}\hk & \f{M}_{1,2}^{-} \\
\hline
\f{M}_{2,1}^{-} & \epsilon_k^{-2} \mathbf{1}
\end{array}
\right] \quad \quad 
\f{M}^{\text{T}}=
\left[
\begin{array}{c|c}
-\f{H}\hk & \f{M}_{1,2}^{+} \\
\hline
\f{M}_{2,1}^{+} & \epsilon_k^{-2} \mathbf{1}
\end{array}
\right]
\]
with $\f{H}\hk \in \R^{r_k \times r_k}$ as in \eqref{eq:H:def:small} and the $(d_1-r_k) \times (d_2- r_k)$-matrix of ones $\mathbf{1}$. Furthermore, the matrices $\f{M}_{1,2}^{-},\f{M}_{1,2}^{+} \in (d_1- r_k) \times r_k$ are such that
\[
\left(\f{M}_{1,2}^{\pm}\right)_{ij} = \frac{(\sigma_i^{(k)})^{-1} \pm \sigma_{j+r_k}^{(k)}\epsilon_k^{-2}}{\sigma_i^{(k)} \pm \sigma_{j+r_k}^{(k)}} %& \text{ if } i \leq r_k, r_k < j
\] 
for $i \in [r_k]$, $j \in [d_2-r_k]$ and
\[
\left(\f{M}_{2,1}^{\pm}\right)_{ij} = \frac{(\sigma_j^{(k)})^{-1} \pm \sigma_{i+r_k}^{(k)}\epsilon_k^{-2}}{\sigma_j^{(k)} \pm \sigma_{i+r_k}^{(k)}} %& \text{ if } i \leq r_k, r_k < j
\] 
for$j \in [r_k]$, $i \in [d_1-r_k]$. The formula \eqref{eq:smoothedranksurrogate:Hessianformula} for $\nabla^2 F_{\epsilon_k}(\f{X}\hk)$ follows by inserting the operator $\nabla F_{\epsilon_k}$ into Theorem 2.2.6 of \cite{Yang09}, Corollary 3.10 \cite{Noferini17} or Theorem 4 of \cite{Ding18}.

By realizing that $0 \leq \sigma_{\ell}\hk \leq \epsilon_k$ for all $\ell > r_k$, we see that
\[
\frac{1}{(\sigma_i\hk)^{2}} \leq \left(\f{M}_{1,2}^{+}\right)_{ij} = \left(\f{M}_{2,1}^{+}\right)_{ji} \leq  \frac{1}{\sigma_i\hk \epsilon_k}
\]
and 
\[
-\frac{1}{\sigma_i\hk \epsilon_k} \leq \left(\f{M}_{1,2}^{-}\right)_{ij} = \left(\f{M}_{2,1}^{-}\right)_{ji} \leq  \frac{1}{(\sigma_i\hk)^{2}}
\]
for all $i$ and $j$.

Now, comparing $M^{\text{S}}$ and $M^{\text{T}}$ with $\f{H}_k$, see \eqref{eq:Hk:blockstructure}, of the weight operator $W\hk$, we see that the upper left blocks of $M^{\text{S}}$ and $M^{\text{T}}$ are just the \emph{negative} of the upper left block $\f{H}\hk$ of $\f{H}_k$, while the lower right blocks coincide. Furthermore, the lower left and the upper right blocks are related such that
\[
\left|\left(\f{M}_{1,2}^{\pm}\right)_{ij}\right| \leq \frac{1}{\sigma_i\hk \epsilon_k} = (\f{H}_{1,2}\hk)_{ij}
\]
for all $i \in [r_k]$, $j \in [d_2-r_k]$, and 
\[
\left|\left(\f{M}_{2,1}^{\pm}\right)_{ij}\right| \leq  \frac{1}{\sigma_j\hk \epsilon_k} = (\f{H}_{2,1}\hk)_{ij}
\]
for all $i \in [d_2-r_k]$, $j \in [r_k]$.

We now point out the relationship of these considerations to an analysis that was performed in \cite{PaternainMokhtariRibeiro19} for the case of an \emph{unconstrained minimization} of $F_{\epsilon_k}$, assuming furthermore that $F_{\epsilon_k}$ was smooth:

In this case, \cite{PaternainMokhtariRibeiro19} considers using \emph{modified Newton steps} 
\[
\f{X}\hkk := \f{X}\hk - \eta_k \left|\nabla^2 F_{\epsilon_k}(\f{X}\hk)\right|_{c}^{-1} \nabla F_{\epsilon_k}(\f{X}\hk)
\]
where the Hessian $\nabla^2 F_{\epsilon_k}(\f{X}\hk)$ is replaced by a positive definite truncated eigenvalue matrix $\left|\nabla^2 F_{\epsilon_k}(\f{X}\hk)\right|_{c}$, which replaces the large negative eigenvalues of $\nabla^2 F_{\epsilon_k}(\f{X}\hk)$ by their modulus for eigenvalues that have large modulus and eigenvalues of small modulus by an appropriate constant $c$. \cite{PaternainMokhtariRibeiro19} shows that such steps are, unlike conventional Newton steps, which often are \emph{attracted by saddle points}, able to \emph{escape} saddle points with an exponential rate that does \emph{not} depend on the conditioning of the problem. Experimental observations of such behavior has been reported also in other works \cite{Murray2010,Dauphin14}.

In view of this, we observe that the weight operator $W\hk$ is nothing but a refined variant of $\left|\nabla^2 F_{\epsilon_k}(\f{X}\hk)\right|_{c}$, as the eigenvalues of $\nabla^2 F_{\epsilon_k}(\f{X}\hk)$ from \eqref{eq:smoothedranksurrogate:Hessianformula} are simply $\{ (\f{M}^{\text{S}}_{ij}, i \leq j\} \cup \{ (\f{M}^{\text{T}}_{ij}, i < j\}$, c.f., e.g., Theorem 4.5 of \cite{Noferini17}. In particular, the refinement is such that the small eigenvalues of $\nabla^2 F_{\epsilon_k}(\f{X}\hk)$, which can be found in the entries of $\f{M}_{1,2}^{\pm}$ and $\f{M}_{2,1}^{\pm}$, are replaced not by a uniform constant, but by \emph{different} upper bounds $(\sigma_i\hk \epsilon_k)^{-1}$ and $(\sigma_j\hk \epsilon_k)^{-1}$ that depend either on the row index $i$ or the column index $j$.

Besides this connection, there are important differences of our algorithm to the algorithm analyzed in \cite{PaternainMokhtariRibeiro19}. While that paper considers the minimization of a fixed smooth function, we update the smoothing parameter $\epsilon_k$ and thus the function $F_{\epsilon_k}$ at each iteration. Furthermore, Algorithm 1 of \cite{PaternainMokhtariRibeiro19} uses backtracking for each modified Newton step, which would be prohibitive to perform as evaluations of $F_{\epsilon_k}$ are very expensive for our smoothed log-det objectives, as they would require the calculation of all singular values. On the other hand, \texttt{MatrixIRLS} uses full modified Newton steps, and we can assure that these are always a descent direction in our case, as we explain in an upcoming paper. Lastly, we do not add noise to the iterates.

As mentioned in \Cref{surrogate}, \texttt{MatrixIRLS} is by no means the \emph{first} algorithm for low-rank matrix recovery that can be considered as an iteratively reweighted least squares algorithm. However, the IRLS algorithms \cite{Fornasier11,Mohan10,Lai13,KS18} are different from \texttt{MatrixIRLS} not only in their computational aspects, but also since they do \emph{not} allow for a close relationship between their weight operator $W\hk$ and the Hessian $\nabla^2 F_{\epsilon_k}(\f{X}\hk)$ at $\f{X}\hk$ as described above.

\section{Experimental Details} \label{sec:experimental:details}
In this section, we specify some details of the setup and the algorithmic parameters for the experiments presented in \Cref{sec:numerics}. The sample complexity experiments of Figures \ref{fig:sampcomp:1}, \ref{fig:sampcomp:2}, \ref{fig:sampcomp:3} and \ref{fig:sampcomp:4} were conducted on a Linux node with Intel Xeon E5-2690 v3 CPU with 28 cores and 64 GB RAM, using MATLAB R2019a. The experiment of Figure \ref{running_time_MatrixIRLS_vs_R2RILS} was conducted on a Windows 10 laptop with Intel i7 7660U with 2 cores and 8 GB RAM, also using MATLAB R2019a, and all other experiments were conducted on a iMac with 4 GHz Quad-Core Intel Core i7 CPU, using MATLAB R2020b. In the main text we divided the algorithms into three main categories according to the main optimization philosophy behind. But now, for the purpose of our experiment, we categorize the algorithms into algorithms of \emph{first-order} type and of \emph{second-order} type based on whether an algorithm exhibits empirically observed locally superlinear convergence rates or not.

\subsection{Algorithmic Parameter Choice}

All the methods are provided with the true rank $r$ of $\f{X}^0$ as an input parameter. If possible, we use the MATLAB implementation provided by the authors of the respective papers. Below we point out the links from which one can download such implementations. We do not make use of explicit parallelization for any of the methods, but most methods use complied C subroutines to efficiently implement sparse evaluations of matrix factorizations. We base our choice of algorithms on the desire to obtain a representative picture of state-of-the-art algorithms for matrix completion, including in particular those that are scalable to problems with dimensionality in the thousands or more, those that come with the best theoretical guarantees, and those that claim to perform particularly well to complete \emph{ill-conditioned} matrices. 

We set a maximal number of outer iterations for the second-order methods as $N_0 = 400$. The second-order type algorithms considered for this paper, including their parameter choices, are:
\begin{itemize}
	\item \texttt{MatrixIRLS}, as described in \Cref{algo:MatrixIRLS} or, minutely, in \Cref{sec:implementation}. As a stopping criterion, we choose a threshold of $10^{-9}$ for the relative change of the Frobenius norm $\frac{\|\f{X}\hkk - \f{X}\hk\|_F}{\|\f{X}\hk\|_F}$. We use the CG method for solving the linear system \eqref{eq:gamma:system} without any preconditioning. We terminate the CG method if a maximum number of $N_{\text{CG\_{inner}}} = 500$ inner iterations is reached or if a relative residual of $\text{tol}_{\text{inner}} = 10^{-9}$ is reached, whichever happens first.\footnote{While this stopping condition uses the condition number $\kappa$, which will probably be unknown in practice, it can be generally chosen independently of $\kappa$ without any problems of convergence.} For the weight operator update step, we use a variant of the randomized Block Krylov method \cite{MuscoMusco15} based on the implementation provided by the authors\footnote{\url{https://github.com/cpmusco/bksvd}}, setting the parameter for the maximal number of iterations to $20$.
	\item \texttt{R2RILS} \cite{BauchNadler20} or \emph{rank $2r$ iterative least squares}, a method that optimizes a least squares data fit objective $\|P_{\Omega}(\f{X}_{0}) - P_{\Omega}(\f{X})\|_F$ over $\f{X} \in T_{\f{Z}\hk}\mathcal{M}_{r}$, where $T_{\f{Z}\hk}\mathcal{M}_{r}$ is a tangent space onto the manifold of rank-$r$ matrices, while iteratively updating this tangent space. As above, we stop the outer iterations a threshold of $10^{-9}$ is reached for the relative change of the Frobenius norm $\frac{\|\f{X}\hkk - \f{X}\hk\|_F}{\|\f{X}\hk\|_F}$. At each outer iteration, \texttt{R2RILS} solves an overdetermined least squares problem of size $(m \times r(d_1 +d_2))$ via the iterative solver LSQR, for which we choose the maximal number of inner iterations as $N_{\text{LSQR\_{inner}}} = 500$ and a termination criterion based on a relative residual of $10^{-10}$. We use the implementation based on the code provided by the authors, but adapted for these stopping criteria.\footnote{\url{https://github.com/Jonathan-WIS/R2RILS}}
	\item  \texttt{RTRMC}, the preconditioned Riemannian trust-region method called RTRMC 2p of \cite{boumal_absil_15}, which was reported to achieve the best performance among a variety of matrix completion algorithms for the task of completing matrices of a condition number of up to $\kappa = 150$. We use the implementation provided by the authors\footnote{RTRMC v3.2 from \url{http://web.math.princeton.edu/~nboumal/RTRMC/index.html}, together with the toolbox Manopt 6.0 (\url{https://www.manopt.org/}) \cite{manopt}.} with default options except from setting the maximal number of inner iterations to $N_{\text{inner}}=500$ and setting the parameter for the tolerance on the gradient norm to $10^{-15}$. Furthermore, as the algorithm otherwise would often run into certain submatrices that are not positive definite for $\rho$ between $1$ and $1.5$, we set the regularization parameter $\lambda = 10^{-8}$, which is small enough not to deter high precision approximations of $\f{X}^0$ if enough samples are provided.
\end{itemize}

Furthermore, we consider the following first-order algorithms, setting the maximal number of outer iterations to $N_0=4000$:

\begin{itemize}
	\item \texttt{LRGeomCG} \cite{Vandereycken13}, a local optimization method for a quadratic data fit term based on gradients with respect to the Riemannian manifold of fixed rank matrices. We use the author's implementation\footnote{\url{http://www.unige.ch/math/vandereycken/matrix_completion.html}} while setting the parameters related to the stopping conditions \texttt{abs\_grad\_tol}, \texttt{rel\_grad\_tol}, \texttt{abs\_f\_tol}, \texttt{rel\_f\_tol}, \texttt{rel\_tol\_change\_x} and \texttt{rel\_tol\_change\_res} each to $10^{-9}$.
	The rank-adaptive variant of \texttt{LRGeomCG}, called \texttt{LRGeomCG Pursuit} \cite{Uschmajew_Vandereycken,Tan2014}, is used with the same algorithmic parameters as  \texttt{LRGeomCG} for the inner iterations, and with a rank increase of $1$ each outer iteration.
	\item  \texttt{LMaFit} or low-rank matrix fitting \cite{Wen12}, a nonlinear successive over-relaxation algorithm based on matrix factorization. We use the implementation provided by the authors\footnote{\url{http://lmafit.blogs.rice.edu}}, setting the tolerance threshold for the stopping condition (which is based on a relative data fit error $\|P_{\Omega}(\f{X}^{(k)})-\f{y}\|_2/\|\f{y}\|_2$) to $5 \cdot 10^{-10}$.
	\item \texttt{ScaledASD} or scaled alternating steepest descent \cite{TannerWei16}, a gradient descent method based on matrix factorization which scales the gradients in a quasi-Newton fashion. We use the implementation provided by the authors\footnote{\label{algWei}\url{http://www.sdspeople.fudan.edu.cn/weike/code/mc20140528.tar}} with the stopping condition of $\|P_{\Omega}(\f{X}^{(k)})-\f{y}\|_2/\|\f{y}\|_2 \leq 10^{-9}$.
	\item  \texttt{ScaledGD} or scaled gradient descent \cite{tong_ma_chi}, a method that is very similar to \texttt{ScaledASD}, but for which a non-asymptotic local convergence analysis has been achieved for the case of a matrix recovery problem related to matrix completion, and which has been investigated experimentally in \cite{tong_ma_chi} in the light of the completion of ill-conditioned low-rank matrices. We use an adapted version of the author's implementation\footnote{\url{https://github.com/Titan-Tong/ScaledGD}}: We choose a step size of $\eta = 0.5$, but increase the normalization parameter $p$ by a factor of $1.5$ in case the unmodified algorithm \texttt{ScaledGD} leads to divergent algorithmic iterates, using the same stopping condition as for \texttt{ScaledASD}.
	\item \texttt{NIHT} or normalized iterative hard thresholding \cite{TannerWei13}, which performs iterative hard thresholding steps with adaptive step sizes. We use the implementation provided by the authors \footref{algWei} with a stopping threshold of $10^{-9}$ for the relative data fit error $\|P_{\Omega}(\f{X}^{(k)})-\f{y}\|_2/\|\f{y}\|_2$ and the convergence rate threshold parameter $1- 10^{-9}$. 
	\item \texttt{R3MC} \cite{MishraS14}, a Riemannian nonlinear conjugate-gradient that also optimizes a least squares data fit objective $\|P_{\Omega}(\f{X}_{0}) - P_{\Omega}(\f{X})\|_F$ by exploiting a three-factor matrix factorization similar to the SVD and performs a search on a quotient manifold defined from the manifold of rank r matrices, this factorization and symmetries from the action of the orthogonal group. We use the author's implementation\footnote{\url{https://bamdevmishra.in/codes/r3mc/}. We used the version from Sep. 2020 which already includes the rank updating strategy.} while choosing the Polyak-Ribier rule for the nonlinear CG and the Armijo line search with a maximum of 50 line searches allowed at each iteration. Also, we set the tolerance parameter for stopping criterion to $10^{-9}$. \texttt{R3MC w/ Rank Upd} corresponds to the method described in the section on \emph{rank updating} of \cite{MishraS14}.
\end{itemize}

\subsection{Remark to Experiment of Figure \ref{fig:plateau}}
Tracking the relative Frobenius error to gauge the performance of methods for the recovery of highly ill-conditioned matrices without taking account the condition number $\kappa$ might not provide a full picture, as a recovery of the singular spaces corresponding to the smallest singular values can only be expected once the relative error is smaller than $1 / \kappa$.

\begin{centering}
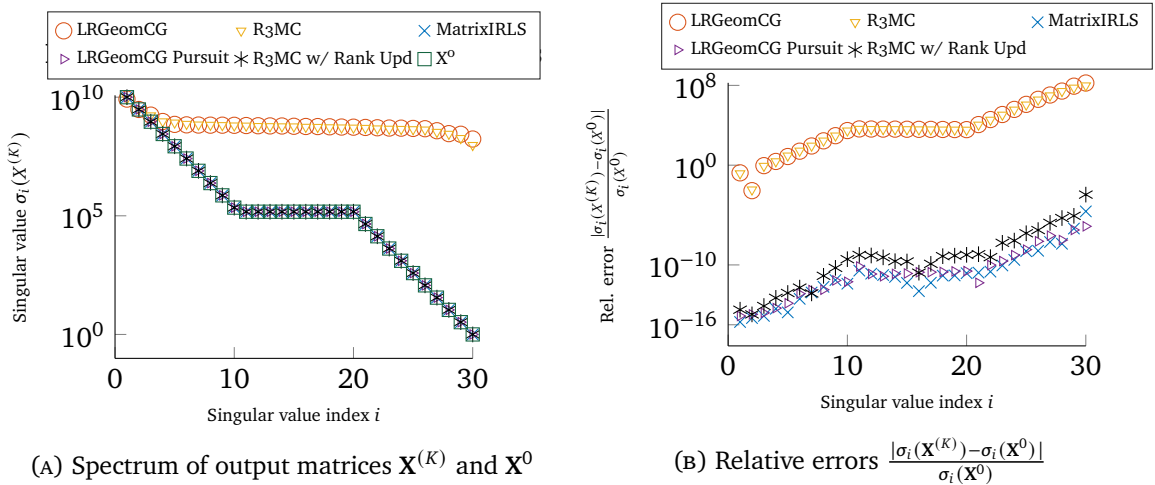
\begin{figure}[!h]
\begin{subfigure}[b]{0.49\textwidth}
\setlength\figureheight{35mm} 
\setlength\figurewidth{50mm}
% This file was created by matlab2tikz.
%
%The latest updates can be retrieved from
%  http://www.mathworks.com/matlabcentral/fileexchange/22022-matlab2tikz-matlab2tikz
%where you can also make suggestions and rate matlab2tikz.
%
\definecolor{mycolor1}{rgb}{0.85000,0.32500,0.09800}%
\definecolor{mycolor2}{rgb}{0.92900,0.69400,0.12500}%
\definecolor{mycolor3}{rgb}{0.00000,0.44700,0.74100}%
\definecolor{mycolor4}{rgb}{0.49400,0.18400,0.55600}%
\definecolor{mycolor5}{rgb}{0.08000,0.39200,0.25100}%
\begin{tikzpicture}

\begin{axis}[%
width=0.951\figurewidth,
height=\figureheight,
at={(0\figurewidth,0\figureheight)},
scale only axis,
xmin=0,
xmax=30,
xlabel style={font=\color{white!15!black}},
xlabel={Singular value index $i$},
ymin=0,
ymax=12000000000,
ymode=log,
ytick={1e0,1e5,1e10},
ylabel style={font=\color{white!15!black}},
ylabel={Singular value $\sigma_i(X^{(K)})$},
axis background/.style={fill=white},
title style={font=\bfseries},
title={First singular values of reconstuctions},
axis x line*=bottom,
axis y line*=left,
legend style={legend cell align=left, align=left, draw=white!15!black},
xlabel style={font=\tiny},ylabel style={font=\tiny},legend style={font=\fontsize{7}{30}\selectfont, anchor=south, legend columns = 3, at={(0.5,1.06)}}
]
\addplot[only marks, mark=o, mark options={}, mark size=3.0619pt, draw=mycolor1] table[row sep=crcr]{%
x	y\\
1	8300650878.21071\\
2	3030735012.34533\\
3	1753273560.83877\\
4	943225564.43873\\
5	713673108.585779\\
6	681131128.047702\\
7	669646417.326314\\
8	661507439.662571\\
9	660182288.104973\\
10	653050345.123534\\
11	638056995.36337\\
12	625402219.076393\\
13	614799440.481466\\
14	610646025.181806\\
15	593855585.301553\\
16	581564752.74762\\
17	577407158.756374\\
18	564048119.227538\\
19	551129452.222437\\
20	544290271.272534\\
21	530135312.753894\\
22	513868680.215522\\
23	500424847.333545\\
24	494608722.158205\\
25	481779388.556324\\
26	468704838.612706\\
27	385192187.808027\\
28	290636767.752205\\
29	273770750.803956\\
30	172425268.845832\\
};
\addlegendentry{LRGeomCG}

\addplot[only marks, mark=triangle, mark options={rotate=180}, mark size=2.0412pt, draw=mycolor2] table[row sep=crcr]{%
x	y\\
1	8541724788.07118\\
2	3028570546.24131\\
3	1679713240.2173\\
4	940207775.904359\\
5	835188816.238836\\
6	734894071.333871\\
7	688924478.079158\\
8	671196310.828307\\
9	650890704.079185\\
10	628515050.170801\\
11	620274729.465496\\
12	596166669.192016\\
13	588281583.941\\
14	579170198.147236\\
15	574615974.284317\\
16	553196635.524704\\
17	546237692.266927\\
18	527528449.569069\\
19	519001312.376597\\
20	510087923.658902\\
21	496978655.990136\\
22	472708993.764719\\
23	459206909.807403\\
24	449815823.535699\\
25	438832540.13134\\
26	423776988.9535\\
27	338366616.219407\\
28	280572696.788111\\
29	186033685.996832\\
30	94635854.5364943\\
};
\addlegendentry{R3MC}

\addplot[only marks, mark=x, mark options={}, mark size=3.0619pt, draw=mycolor3] table[row sep=crcr]{%
x	y\\
1	10000000000\\
2	3039195382.3132\\
3	923670857.187387\\
4	280721620.394118\\
5	85316785.2417278\\
6	25929437.9740477\\
7	7880462.81567125\\
8	2395026.61998901\\
9	727895.384400529\\
10	221221.629106755\\
11	148735.210732655\\
12	148735.210732347\\
13	148735.210730429\\
14	148735.210730285\\
15	148735.210729622\\
16	148735.210729175\\
17	148735.210728977\\
18	148735.210728578\\
19	148735.210728275\\
20	148735.210726687\\
21	45203.5365628424\\
22	13738.237959123\\
23	4175.31893692741\\
24	1268.96100357367\\
25	385.662043032337\\
26	117.210230061718\\
27	35.6224781925662\\
28	10.8263674819052\\
29	3.29034310040211\\
30	0.999976646513891\\
};
\addlegendentry{MatrixIRLS}

\addplot[only marks, mark=triangle, mark options={rotate=270}, mark size=2.0412pt, draw=mycolor4] table[row sep=crcr]{%
x	y\\
1	10000000000\\
2	3039195382.3132\\
3	923670857.187388\\
4	280721620.394116\\
5	85316785.2417269\\
6	25929437.9740501\\
7	7880462.81567225\\
8	2395026.6199867\\
9	727895.384396282\\
10	221221.629106592\\
11	148735.210739713\\
12	148735.210731019\\
13	148735.210728521\\
14	148735.210727604\\
15	148735.210727017\\
16	148735.21072693\\
17	148735.210726789\\
18	148735.210726346\\
19	148735.210725954\\
20	148735.210725369\\
21	45203.5365635298\\
22	13738.237960155\\
23	4175.31893757462\\
24	1268.96100438283\\
25	385.662040802903\\
26	117.210232481682\\
27	35.6224758182314\\
28	10.8263670023975\\
29	3.29034351651083\\
30	0.999999260682693\\
};
\addlegendentry{LRGeomCG Pursuit}

\addplot[only marks, mark=asterisk, mark options={}, mark size=2.9580pt, draw=black] table[row sep=crcr]{%
x	y\\
1	10000000000\\
2	3039195382.31319\\
3	923670857.187379\\
4	280721620.394131\\
5	85316785.2417148\\
6	25929437.9740329\\
7	7880462.81566881\\
8	2395026.62000689\\
9	727895.384361368\\
10	221221.62919\\
11	148735.210878246\\
12	148735.210860637\\
13	148735.210813757\\
14	148735.210766134\\
15	148735.210756372\\
16	148735.210732606\\
17	148735.210710703\\
18	148735.210617663\\
19	148735.210603069\\
20	148735.210576134\\
21	45203.5365099504\\
22	13738.2379505741\\
23	4175.31900516407\\
24	1268.96096830659\\
25	385.661984487046\\
26	117.210266286598\\
27	35.6224203854896\\
28	10.8264215464759\\
29	3.29037211538973\\
30	0.998880604917674\\
};
\addlegendentry{R3MC w/ Rank Upd}

\addplot[only marks, mark=square, mark options={}, mark size=2.5000pt, draw=mycolor5] table[row sep=crcr]{%
x	y\\
1	10000000000\\
2	3039195382.31319\\
3	923670857.187388\\
4	280721620.394117\\
5	85316785.241728\\
6	25929437.9740466\\
7	7880462.81566991\\
8	2395026.61998749\\
9	727895.384398315\\
10	221221.629107045\\
11	148735.210729352\\
12	148735.210729351\\
13	148735.210729351\\
14	148735.210729351\\
15	148735.210729351\\
16	148735.210729351\\
17	148735.210729351\\
18	148735.210729351\\
19	148735.210729351\\
20	148735.21072935\\
21	45203.5365636025\\
22	13738.2379588326\\
23	4175.3189365604\\
24	1268.96100316792\\
25	385.662042116348\\
26	117.21022975335\\
27	35.6224789026184\\
28	10.8263673387385\\
29	3.29034456231756\\
30	0.999999999999992\\
};
\addlegendentry{$\text{X}^\text{0}$}

\end{axis}
\end{tikzpicture}%
\caption{Spectrum of output matrices $\f{X}^{(K)}$ and $\f{X}^0$}
\label{fig:singplateau}
\end{subfigure}
\begin{subfigure}[b]{0.49\textwidth}
\setlength\figureheight{35mm} 
\setlength\figurewidth{50mm}
% This file was created by matlab2tikz.
%
%The latest updates can be retrieved from
%  http://www.mathworks.com/matlabcentral/fileexchange/22022-matlab2tikz-matlab2tikz
%where you can also make suggestions and rate matlab2tikz.
%
\definecolor{mycolor1}{rgb}{0.85000,0.32500,0.09800}%
\definecolor{mycolor2}{rgb}{0.92900,0.69400,0.12500}%
\definecolor{mycolor3}{rgb}{0.00000,0.44700,0.74100}%
\definecolor{mycolor4}{rgb}{0.49400,0.18400,0.55600}%
\begin{tikzpicture}

\begin{axis}[%
width=0.951\figurewidth,
height=\figureheight,
at={(0\figurewidth,0\figureheight)},
scale only axis,
xmin=0,
xmax=30,
xlabel style={font=\color{white!15!black}},
xlabel={Singular value index $i$},
ymin=0,
ymax=180000000,
ymode=log,
ytick={1e-16,1e-10,1e0,1e8},
ylabel style={font=\color{white!15!black}},
ylabel={Rel. error $\frac{|\sigma_i(X^{(K)})-\sigma_i(X^{0})|}{\sigma_i(X^{0})}$},
axis background/.style={fill=white},
title style={font=\bfseries},
title={Rel. errors to singular value of X0},
axis x line*=bottom,
axis y line*=left,
legend style={legend cell align=left, align=left, draw=white!15!black},
xlabel style={font=\tiny},ylabel style={font=\tiny},legend style={font=\fontsize{7}{30}\selectfont, anchor=south, legend columns = 3, at={(0.5,1.05)}}
]
\addplot[only marks, mark=o, mark options={}, mark size=3.0619pt, draw=mycolor1] table[row sep=crcr]{%
x	y\\
1	0.169934912178929\\
2	0.00278375323189352\\
3	0.898158361494222\\
4	2.36000327696347\\
5	7.36497890261252\\
6	25.2686421791888\\
7	83.9755189498207\\
8	275.200453949871\\
9	905.974137019273\\
10	2951.01851536649\\
11	4288.88530849243\\
12	4203.80272297066\\
13	4132.51645159664\\
14	4104.59155553947\\
15	3991.70342502943\\
16	3909.0677633481\\
17	3881.11477245334\\
18	3791.29717335675\\
19	3704.44035618648\\
20	3658.45809740347\\
21	11726.7397534589\\
22	37403.2640515733\\
23	119852.083066701\\
24	389773.564327375\\
25	1249225.87727454\\
26	3998838.00576798\\
27	10813176.5124621\\
28	26845270.2399901\\
29	83204279.1657456\\
30	172425267.844439\\
};
\addlegendentry{LRGeomCG}

\addplot[only marks, mark=triangle, mark options={rotate=180}, mark size=2.0412pt, draw=mycolor2] table[row sep=crcr]{%
x	y\\
1	0.145827521192882\\
2	0.00349593715945922\\
3	0.818519256233866\\
4	2.34925316612366\\
5	8.78926730387808\\
6	27.3420748289817\\
7	86.4218296810268\\
8	279.245866675091\\
9	893.209137783196\\
10	2840.11030511702\\
11	4169.32877705192\\
12	4007.24166832185\\
13	3954.22742097348\\
14	3892.96831662903\\
15	3862.34864129737\\
16	3718.3387686211\\
17	3671.55130502286\\
18	3545.76237712802\\
19	3488.43138502026\\
20	3428.50348580939\\
21	10993.2427909566\\
22	34407.2694724911\\
23	109980.277307092\\
24	354474.687127302\\
25	1137867.1130329\\
26	3615528.03569326\\
27	9498681.47923829\\
28	25915681.33457\\
29	56539270.9437402\\
30	94635853.5357299\\
};
\addlegendentry{R3MC}

\addplot[only marks, mark=x, mark options={}, mark size=3.0619pt, draw=mycolor3] table[row sep=crcr]{%
x	y\\
1	1.9073486328125e-16\\
2	4.70687565180684e-16\\
3	7.74362135320225e-16\\
4	4.03420388191858e-15\\
5	1.74656852712256e-15\\
6	3.97966748722748e-14\\
7	1.71953650155042e-13\\
8	6.34419996756901e-13\\
9	3.04130759352308e-12\\
10	1.30454505725264e-12\\
11	2.68032328643198e-11\\
12	1.32677744189958e-11\\
13	9.66049735367334e-12\\
14	6.32207836742621e-12\\
15	1.57968797116072e-12\\
16	2.34223523037209e-13\\
17	1.63291169569382e-12\\
18	8.43145980296599e-12\\
19	8.81380964758981e-12\\
20	1.48529413029761e-11\\
21	1.681548280817e-11\\
22	2.11375406510832e-11\\
23	8.78992702325313e-11\\
24	3.19749891292884e-10\\
25	2.37511213736133e-09\\
26	2.63091568977508e-09\\
27	1.99329766277485e-08\\
28	1.32227135833514e-08\\
29	4.4429879486134e-07\\
30	2.33534977656437e-05\\
};
\addlegendentry{MatrixIRLS}

\addplot[only marks, mark=triangle, mark options={rotate=270}, mark size=2.0412pt, draw=mycolor4] table[row sep=crcr]{%
x	y\\
1	7.62939453125e-16\\
2	1.09827098542159e-15\\
3	1.54872427064045e-15\\
4	4.67118344222151e-15\\
5	1.4321861922405e-14\\
6	1.30165297596682e-13\\
7	2.98762080819207e-13\\
8	3.25667635478949e-13\\
9	2.79261000686298e-12\\
10	2.04838307194671e-12\\
11	6.96997937588778e-11\\
12	1.22665031012168e-11\\
13	7.24938436214077e-12\\
14	1.26803566945532e-11\\
15	1.39041110078989e-11\\
16	1.84656972811181e-11\\
17	1.87312288774101e-11\\
18	1.98667335592688e-11\\
19	2.2579773782252e-11\\
20	2.39369787578461e-11\\
21	1.6087943971251e-12\\
22	9.62540934721529e-11\\
23	2.42907369744542e-10\\
24	9.57400705351611e-10\\
25	3.40567081901003e-09\\
26	2.32772800789919e-08\\
27	8.65856022456131e-08\\
28	3.10678570128408e-08\\
29	3.17842303751958e-07\\
30	7.39322125958423e-07\\
};
\addlegendentry{LRGeomCG Pursuit}

\addplot[only marks, mark=asterisk, mark options={}, mark size=2.9580pt, draw=black] table[row sep=crcr]{%
x	y\\
1	3.24249267578125e-15\\
2	1.09827098542159e-15\\
3	7.61456099731554e-15\\
4	5.01090587438308e-14\\
5	1.53523373534073e-13\\
6	5.31292793060188e-13\\
7	1.39690181775436e-13\\
8	8.10183305695987e-12\\
9	5.07573381947232e-11\\
10	3.74992239752988e-10\\
11	1.00136641605146e-09\\
12	8.81652366619007e-10\\
13	5.7467749105787e-10\\
14	2.35677391688977e-10\\
15	2.20997427490733e-10\\
16	1.67441446030217e-11\\
17	1.27587247154191e-10\\
18	7.50630036802396e-10\\
19	8.46024757632375e-10\\
20	1.02674767536364e-09\\
21	1.1868997763132e-09\\
22	6.01129703633233e-10\\
23	1.64307607442186e-08\\
24	2.74723406206838e-08\\
25	1.49429529172878e-07\\
26	3.1168997266501e-07\\
27	1.64270255291886e-06\\
28	5.00700979206902e-06\\
29	8.37392105801331e-06\\
30	0.00111939509022503\\
};
\addlegendentry{R3MC w/ Rank Upd}

\end{axis}
\end{tikzpicture}%
\caption{Relative errors $\frac{| \sigma_i(\f{X}^{(K)}) - \sigma_i(\f{X}^0)|}{\sigma_i(\f{X}^0)}$}
\label{fig:singplateau:2}
\end{subfigure}
\caption{Spectrum information of algorithmic output $\f{X}^{(K)}$ after convergence, experiment of Figure \ref{fig:plateau} ($1000 \times 1000$ matrix, $r=30$, $\kappa=10^{10}$, $\rho= 1.5$)}
\end{figure}
\end{centering}

For this reason, we report in Figure \ref{fig:singplateau} the singular values of the recovered matrices $\f{X}^{(K)}$ and the relative error on a basis of individual singular values in Figure \ref{fig:singplateau:2} for the very experiment conducted in Section \ref{sec:rank_update} and illustrated in Figure \ref{fig:plateau}. We observe that  \texttt{MatrixIRLS}, \texttt{LRGeomCG Pursuit} and \texttt{R3MC w/ Rank Upd} each are able to recover even the smallest singular values such (with indices $i=28,29,30$) with a high precision of a relative error between $10^{-7}$ and  $10^{-3}$.

This shows that despite a not too restrictive choice of the tolerance on of the inner conjugate gradient iterations (such as $\text{tol}_{\text{inner}} = 10^{-3}$), \texttt{MatrixIRLS} is successful in recovering the complete spectrum of $\f{X}^0$, indicating that an implementation of \texttt{MatrixIRLS} that solves \eqref{eq:gamma:system} via conjugate gradient method together with weight updates based on a randomized block Krylov method can be very precise even without requiring an very high precision on the iterative solver.

These observations suggest that \texttt{MatrixIRLS} and Riemannian optimization methods with adaptive rank updates such as \texttt{LRGeomCG Pursuit} and \texttt{R3MC w/ Rank Upd} are good alternatives to solve hard matrix recovery problems in a numerically efficient way, warranting further investigations for a better theoretical understanding.

\bibliographystyle{alphaabbr}
\bibliography{MatrixIRLS_wellconditioned}

%\newpage
%\onecolumn

\end{document}